\newtheorem{theorem}{Theorem}[section]
\newtheorem{corollary}[theorem]{Corollary}
\newtheorem{lemma}[theorem]{Lemma}
\newtheorem{proposition}[theorem]{Proposition}
\theoremstyle{definition}
\newtheorem{definition}[theorem]{Definition}
\newtheorem{remark}[theorem]{Remark}
\newtheorem{example}[theorem]{Example}
\newtheorem{problem}[theorem]{Problem}
\newcommand{\spn}{{\rm span}}
\newcommand{\Fin}{{\mathscr F}}
\newcommand{\Ad}{{\rm Ad}\,}
\newcommand{\cb}{{\rm cb}}
\newcommand{\red}{{\lambda}}
\newcommand{\cB}{{\mathcal B}}
\newcommand{\cH}{{\mathcal H}}
\newcommand{\cU}{{\mathcal U}}
\newcommand{\cQ}{{\mathcal Q}}
\newcommand{\cP}{{\mathcal P}}
\newcommand{\Zb}{{\mathbb Z}}
\newcommand{\Tb}{{\mathbb T}}
\newcommand{\Rb}{{\mathbb R}}
\newcommand{\Nb}{{\mathbb N}}
\newcommand{\alg}{{\rm alg}}
\newcommand{\orb}{{\rm orb}}
\newcommand{\sB}{{\mathscr B}}
\newcommand{\sC}{{\mathscr C}}
\newcommand{\sH}{{\mathscr H}}
\newcommand{\sS}{{\mathscr S}}
\newcommand{\sP}{{\mathscr P}}
\newcommand{\OL}{{\rm{O}\mathcal{L}}}
\begin{document}

\title[Residually finite actions]{Residually finite actions and crossed products}

\author{David Kerr}
\author{Piotr W. Nowak}
\address{\hskip-\parindent
David Kerr, Department of Mathematics, Texas A{\&}M University,
College Station TX 77843-3368, U.S.A.}
\email{kerr@math.tamu.edu}

\address{\hskip-\parindent
Piotr W. Nowak, Department of Mathematics, Texas A{\&}M University,
College Station TX 77843-3368, U.S.A.}
\email{pnowak@math.tamu.edu}

\date{April 5, 2011}

\begin{abstract}
We study a notion of residual finiteness for continuous actions of discrete groups
on compact Hausdorff spaces and how it relates to the existence of norm microstates for the reduced crossed product.
Our main result asserts that an action of a free group on a zero-dimensional compact metrizable space
is residually finite if and only if its reduced crossed product admits norm microstates, i.e., is an MF algebra.
\end{abstract}

\maketitle

\section{Introduction}

Finite-dimensional approximation is a ubiquitous notion in the structure theory of $C^*$-algebras,
and it appears in a variety of different ways through properties like nuclearity, exactness,
quasidiagonality, and the existence of norm microstates (i.e., being an MF algebra) \cite{BroOza08}. Given that
reduced crossed products by actions of countable discrete groups on compact metrizable spaces
play an important role as examples and motivation 
in the study of $C^*$-algebras, one would like to understand the extent to which various forms of
finite-dimensional approximation in such crossed products are reflections of finite approximation 
properties at the level of the dynamics. 

Nuclearity and exactness are essentially measure-theoretic concepts and do not reflect anything 
intrinsically topological in the dynamics. Indeed nuclearity of the reduced crossed product is equivalent 
to the amenability of the action, which can be expressed in purely measure-dynamical terms \cite{AnaRen00}, 
while exactness of the reduced crossed product is equivalent to the exactness of the 
acting group \cite[Thm.\ 10.2.9]{BroOza08}, 
which can be characterized by the existence of an amenable action of the group on a compact 
metrizable space.

On the other hand, quasidiagonality and the existence of norm microstates
both involve the matricial approximation of multiplicative structure and are thus topological in nature,
and their precise relation to the dynamics is for the most part poorly understood. Actually much of the difficulty
stems from the group itself, as anything but the simplest geometry in the Cayley graph
can cause severe complications for an operator analysis based on perturbations, and obtaining a topological understanding
of the reduced crossed product amounts in part to knowning something about the reduced group $C^*$-algebra, which
sits inside the reduced crossed product in a canonical way. For free groups on two or more generators it is a deep
theorem of Haagerup and Thorbj{\o}rnsen that the reduced group $C^*$-algebra is MF \cite{HaaTho05}. For countable
discrete groups $G$, if the reduced group $C^*$-algebra $C^*_\red (G)$ is quasidiagonal then $G$ is amenable \cite{Had87},
while $C^*_\red (G)$ is residually finite-dimensional if and only if $G$ is residually finite and amenable \cite[Cor.\ 4]{Dad99}.
Not much else seems to be known however about quasidiagonality and the existence of norm microstates 
for reduced group $C^*$-algebras.

One important dynamical setting in which we do have a complete understanding of quasidiagonality is that of integer actions.
Pimsner showed in \cite{Pim83} that for a self-homeomorphism of a compact metrizable space
the following are equivalent: (i) the homeomorphism is pseudo-nonwandering (i.e., chain recurrent), (ii)
the crossed product is embeddable into an AF algebra, (iii) the crossed product is quasidiagonal.
Already though for $\Zb^2$-actions it is not clear what dynamical condition should correspond to quasidiagonality.
Lin succeeded however in showing that, for actions of $\Zb^d$ where $d\geq 1$, embeddability 
of the crossed product into a simple AF algebra is equivalent 
to the existence of an invariant Borel probability measure of full support \cite{Lin08}. 
It is also known that almost periodic actions of certain amenable groups produce
quasidiagonal crossed products \cite{Orf07}. Note that in all of these cases the reduced and full crossed products
coincide and are nuclear due to the amenability of the acting group, and that a nuclear $C^*$-algebra is
quasidiagonal if and only if it admits norm microstates, i.e., is an MF algebra.

The goal of the present paper is to initiate a dynamical study of the MF property in reduced crossed products
that is primarily targeted at actions of free groups. Such a program owes its very possibility
to the Haagerup-Thorbj{\o}rnsen result cited above which establishes the MF property of the reduced group $C^*$-algebras of
free groups on two or more generators, a conclusion whose validity is still unknown for the vast majority of nonamenable groups.
For actions of a countable discrete group $G$ on a compact metric space $X$ we define a dynamical version of residual finiteness 
for groups, which we again call residual finiteness (Definition~\ref{D-res finite}), 
and our main result asserts that, when $G$ is free and $X$ is zero-dimensional, 
the action is residually finite if and only if the reduced crossed product is an MF algebra (Theorem~\ref{T-mf}). 

For $G=\Zb$ residual finiteness is the same as chain recurrence and the situation reduces to a special case of the equivalence 
of (i) and (iii) in the statement of Pimsner's theorem above. Pimnser showed in \cite[Lemma 2]{Pim83} that, for a $\Zb$-action, 
chain recurrence is equivalent to every open set being topologically incompressible, and he establish the implication
(iii)$\Rightarrow$(i) by using compressibility to construct a nonunitary isometry in the crossed product as an obstruction 
to quasidiagonality. For a free group $F_r$ with $r\geq 2$ we do not have an analogue of this noncompressibility characterization 
for residual finiteness, and so our proof that an $F_r$-action on a zero-dimensional space is residually finite
if the reduced crossed product is MF must proceed by different means. The idea is to directly extract 
the finite approximations of the action from the existence of norm microstates.
To carry out the required perturbation arguments we need to be able to work with projections, which explains our
hypothesis that $X$ is zero-dimensional.

A residually finite action of $G$ on $X$ is, roughly speaking, one that approximately admits extensions consisting of actions of $G$ 
on finite sets, i.e., there are actions of $G$ on finite sets which map into $X$ approximately equivariantly with approximately dense image
(see Section~\ref{S-res finite}). This can be viewed as a topological analogue of soficity for actions preserving
a Borel probability measure, in which the approximations are measured in $2$-norm and the maps into $X$ must approximately
push forward the uniform measure to the given measure on $X$. One can count the exponential growth of the number of such 
approximately equivariant maps up to some observational error to define a notion of entropy 
(see \cite{Bow10a,KerLi10a} and especially Sections~2 and 3 of \cite{KerLi10b})
and this is one motivation for the study of finite dynamical approximations. 
It can be shown that all measure-preserving actions of $F_r$ on a 
standard probability space are sofic (see \cite{Bow10}), and that the von Neumann algebra crossed product of such actions admit 
tracial microstates, i.e., embed into the ultrapower $R^\omega$ of the hyperfinite II$_1$ factor (use \cite{BroDykJun10}).
So residual finiteness in our topological context can be thought of as playing the role of measure-preservingness,
while the MF property (existence of norm microstates) is the analogue of embeddability into $R^\omega$ (existence of tracial
microstates). Note however that in our setting one of the main points is to show that the MF property implies 
residual finiteness, while in the von Neumann algebra case there is nothing to prove in this direction.

After introducing residually finite actions in Section~\ref{S-res finite},
we show in the first part of Section~\ref{S-mf} that such actions give rise to an MF reduced crossed product 
whenever the reduced group $C^*$-algebra of the acting group is MF. 
In fact we prove this more generally in Theorem~\ref{T-qd mf} 
for actions on arbitrary separable $C^*$-algebras which are quasidiagonal in the sense of Definition~\ref{D-qd}. 
Using this fact we then proceed in the second part of Section~\ref{S-mf} to establish the
equivalence of residual finiteness and the MFness of the reduced crossed product in the case of a free group
acting on a zero-dimensional compact metrizable space (Theorem~\ref{T-mf}).
Motivated by recent work of R{\o}rdam and Sierakowski on paradoxical decompositions in the context of continuous actions 
on the Cantor set and purely infinite crossed products \cite{RorSie10}, we examine
in Section~\ref{S-pd} how paradoxical decomposability fits into our discussion of residual finiteness at the other extreme.
Section~\ref{S-minimal} gives a list of characterizations of residual finiteness 
for minimal actions of free groups which incorporates paradoxical decomposability among other phenomena.
In Section~\ref{S-affine} we use spaces of probability measures to construct,
for every nonamenable countable discrete group, an example of an action which is not residually finite
although its restriction to any cyclic subgroup is residually finite. Finally, in Section~\ref{S-integers}
we revisit integer actions and observe in this case, combining arguments and results from \cite{Had87,BlaKir07,Eck00},
that the following conditions are equivalent:
(i) the crossed product is a strong NF algebra, (ii) the $\OL_\infty$ invariant of the crossed product is $1$,
and (iii) there is collection of transitive residually finite subsystems with dense union.

We remark that Margulis and Vinberg defined in \cite{MarVin00} a nontopological notion of 
residual finiteness for actions in which the maps in the finite modelling go in the other direction.

We round out the introduction with a few words about notational convention.
Throughout the paper $G$ will always be a countable discrete group, with extra hypotheses
added explicitly whenever appropriate.
Actions of $G$ on a compact Hausdorff space $X$ will often be unnamed, in which case
simply write $G\curvearrowright X$, and they
will always be expressed using the concatenation $(s,x)\mapsto sx$ for $x\in X$ and $s\in G$. 
When necessary we will refer to actions by means of a symbol such as 
$\alpha$, in particular when we need to talk about the induced action on $C(X)$, which will 
actually be expressed using this symbol, i.e., $\alpha_s (f)(x) = f(s^{-1} x)$ for $f\in C(X)$, $x\in X$, and $s\in G$.
We write $\lambda$ for the left regular representation of $G$ on $\ell_2 (G)$.
The reduced crossed product of a continuous action $G\curvearrowright X$ will be written 
$C(X)\rtimes_\red G$ and the reduced group $C^*$-algebra of $G$ will be written $C^*_\red (G)$.
The canonical unitary in $C(X)\rtimes_\red G$ or $C^*_\red (G)$ associated to a group element $s$ will
invariably be denoted by $u_s$. For background on crossed products and group $C^*$-algebras, especially
from the kind of finite-dimensional approximation viewpoint of this paper, we refer the reader to \cite{BroOza08}.

For a compact Hausdorff space we write $M_X$ for the space of Borel probability measures on $X$
equipped with the weak$^*$ topology, under which it is a compact Hausdorff space.
Whenever convenient we will simultaneously regard elements of $M_X$ as states on $C(X)$.
The unitary group of a unital $C^*$-algebra $A$ will be written $\cU (A)$.
\medskip

\noindent{\it Acknowledgements.}
The first author was partially supported by NSF grant DMS-0900938 and the second author
partially supported by NSF grant DMS-0900874. We thank Adam Sierakowski for comments and corrections,
and Hanfeng Li for the argument in Example~\ref{E-alg} and the simple proof of
Lemma~\ref{L-invt measure res fin}.

\section{Residually finite actions}\label{S-res finite}

\begin{definition}\label{D-res finite}
A continuous action of $G$ on a compact Hausdorff space $X$ is said to be 
{\it residually finite} if for every finite set $F\subseteq G$ and neighbourhood $\varepsilon$
of the diagonal in $X\times X$ there are a finite set $E$, an action of $G$ on $E$,
and a map $\zeta : E\to X$ such that $\zeta (E)$ is $\varepsilon$-dense in $X$ and
$(\zeta (sz), s\zeta (z)) \in \varepsilon$ 
for all $z\in E$ and $s\in F$. 
\end{definition}

Note that if $S$ is a generating set for $G$ then to verify residual finiteness it is sufficient
for $F$ in the definition to be quantified over all finite subsets of $S$ by uniform continuity.

We will mostly be interested in actions on compact metrizable spaces, in which
case residual finiteness can be expressed in terms of a given compatible metric.
Indeed if $X$ is metrizable in the above definition and $d$ is a compatible metric on $X$ then 
residual finiteness is the same as saying that for every finite set $F\subseteq G$ and $\varepsilon > 0$
there are a finite set $E$, an action of $G$ on $E$,
and a map $\zeta : E\to X$ such that $\zeta (E)$ is $\varepsilon$-dense in $X$ and
$d(\zeta (sz), s\zeta (z)) < \varepsilon$ 
for all $z\in E$ and $s\in F$. 

Note that if $G$ admits a free residually finite continuous action
on a compact metrizable space, then $G$ must be a residually finite group, as is easy to see.
Indeed this is the motivation for our terminology in the dynamical setting.

In the case $G=\Zb$, residual finiteness is equivalent to chain recurrence, as explained at the beginning
of Section~\ref{S-integers}.

If $X$ has no isolated points then a simple perturbation argument shows that in Definition~\ref{D-res finite}
we can always take the set $E$ to be a subset of $X$ and the map $\zeta : E\to X$ to be the inclusion. However, 
for general $X$ this is not possible, as the following example demonstrates. 
For each $i=1,2,3$ take $X_i$ to be a copy of $\Zb$ and consider the two-point compactification $X_i \cup \{ y_i , z_i \}$ 
where $y_i$ and $z_i$ are the points at $-\infty$ and $+\infty$. Let $\Zb$ act on $X_i \cup \{ y_i , z_i \}$
so that it translates $X_i$ via addition and fixes $y_i$ and $z_i$. Now define $X$ by taking the quotient disjoint union 
of the sets $X_i \cup \{ y_i , z_i \}$ for $i=1,2,3$ which collapses each of the sets $\{ z_1 ,y_2 , y_3 \}$ 
and $\{ y_1 ,z_2 , z_3 \}$ to single points.

The following proposition shows that in the definition of residual finiteness it is enough to verify the 
$\varepsilon$-density condition in a local way.

\begin{proposition}\label{P-point}
A continuous action of $G$ on a compact Hausdorff space $X$ is residually finite if and only if for every $x\in X$, finite set 
$F\subseteq G$, and neighbourhood $\varepsilon$
of the diagonal in $X\times X$ there are a finite set $E$, an action of $G$ on $E$,
and a map $\zeta : E\to X$ such that $x$ lies in the $\varepsilon$-neighbourhood of $\zeta (E)$ and
$(\zeta (sz), s\zeta (z)) \in \varepsilon$ for all $z\in E$ and $s\in F$. 
\end{proposition}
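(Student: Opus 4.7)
The forward implication is immediate: if $\zeta(E)$ is $\varepsilon$-dense in all of $X$, then in particular every $x\in X$ lies in the $\varepsilon$-neighbourhood of $\zeta(E)$. So the content is the reverse implication, where we must upgrade a pointwise approximation property to global $\varepsilon$-density.

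The plan is to use compactness of $X$ together with a disjoint-union construction. Given a finite set $F\subseteq G$ and a neighbourhood $\varepsilon$ of the diagonal, first choose a symmetric open neighbourhood $\delta$ of the diagonal with $\delta\subseteq\varepsilon$ and $\delta\circ\delta\subseteq\varepsilon$ (available since $X\times X$ is uniformizable and compact Hausdorff spaces carry a unique uniformity). By compactness, pick finitely many points $x_1,\dots,x_n\in X$ whose $\delta$-neighbourhoods cover $X$. For each $i$, apply the pointwise hypothesis to the data $(x_i,F,\delta)$ to obtain a finite set $E_i$, an action $G\curvearrowright E_i$, and a map $\zeta_i\colon E_i\to X$ with $x_i$ in the $\delta$-neighbourhood of $\zeta_i(E_i)$ and $(\zeta_i(sz),s\zeta_i(z))\in\delta$ for all $z\in E_i$ and $s\in F$.

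Then I would form $E=\bigsqcup_{i=1}^n E_i$, equip it with the obvious disjoint-union $G$-action (acting on each piece by the corresponding action), and define $\zeta\colon E\to X$ piecewise by $\zeta|_{E_i}=\zeta_i$. Approximate equivariance on $E$ is inherited directly from each piece and lies in $\delta\subseteq\varepsilon$. For global $\varepsilon$-density: given any $y\in X$ there is some $i$ with $(y,x_i)\in\delta$, and then some $z\in E_i$ with $(x_i,\zeta_i(z))\in\delta$, whence $(y,\zeta(z))\in\delta\circ\delta\subseteq\varepsilon$.

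There is essentially no obstacle here; the only small subtlety is the choice of the refinement $\delta$ with $\delta\circ\delta\subseteq\varepsilon$, which is the correct substitute in the uniform/entourage setting for the usual triangle-inequality trick with halved radii. In the metrizable case one simply takes $\delta=\varepsilon/2$ and the argument reads exactly as above with $d$-balls replacing entourages.
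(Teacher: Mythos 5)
Your argument is correct and follows essentially the same route as the paper: cover $X$ by finitely many points via compactness, apply the local hypothesis at each, and glue the resulting finite systems by disjoint union. In fact you are slightly more careful than the paper's own proof, which applies the hypothesis with $\varepsilon$ itself at each point of an $\varepsilon$-dense set and thus really only gets $(\varepsilon\circ\varepsilon)$-density; your refinement $\delta$ with $\delta\circ\delta\subseteq\varepsilon$ closes that (harmless) gap cleanly.
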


\begin{proof} 
For the nontrivial direction, given a finite set $F\subseteq G$ and neighbourhood $\varepsilon$
of the diagonal in $X\times X$ take, for every $x$ in some finite $\varepsilon$-dense set $D\subseteq X$, a finite set $E_x$
and an action of $G$ on $E_x$, and a map $\zeta_x : E_x \to X$ such that $x$ lies in the $\varepsilon$-neighbourhood of 
$\zeta_x (E_x )$ and $(\zeta_x (sz), s\zeta_x (z)) \in \varepsilon$ for all $z\in E_x$ and $s\in F$. Now  
set $E = \coprod_{x\in D} E_x$ and define $\zeta : E \to X$ so that it restricts to $\zeta_x$ on $E_x$ for each $x\in D$.
Then $\zeta (E)$ is $\varepsilon$-dense in $X$ and
$(\zeta (sz), s\zeta (z)) \in \varepsilon$ for all $z\in E$ and $s\in F$, verifying residual finiteness. 
\end{proof}

\begin{proposition}\label{P-res fin measure}
Let $X$ be a compact Hausdorff space and $G\curvearrowright X$ a residually finite continuous action.
Then there is a $G$-invariant Borel probability measure on $X$.
\end{proposition}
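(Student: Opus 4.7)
The plan is to build an invariant measure as a weak$^*$ limit of pushforwards of the uniform counting measures on the finite approximating systems guaranteed by residual finiteness.

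First, for each pair $(F,\varepsilon)$, where $F$ is a finite subset of $G$ and $\varepsilon$ is a neighbourhood of the diagonal in $X\times X$, I invoke Definition~\ref{D-res finite} to obtain a finite $G$-set $E=E_{F,\varepsilon}$ and a map $\zeta=\zeta_{F,\varepsilon}:E\to X$ that is approximately $F$-equivariant in the sense given. Then define
\[
\mu_{F,\varepsilon} \;=\; \frac{1}{|E|}\sum_{z\in E} \delta_{\zeta(z)} \;\in\; M_X,
\]
the pushforward under $\zeta$ of the uniform probability measure on $E$. The directed set of pairs $(F,\varepsilon)$, with $F$ ordered by inclusion and $\varepsilon$ by reverse inclusion, makes $(\mu_{F,\varepsilon})$ into a net in $M_X$.

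Next I would check that this net is asymptotically $G$-invariant. Fix $f\in C(X)$ and $s\in G$. Since $z\mapsto sz$ is a bijection of $E$, reindexing gives
\[
\int f\,d\mu_{F,\varepsilon} \;=\; \frac{1}{|E|}\sum_{z\in E} f(\zeta(sz)),
\qquad
\int f\,d(s_*\mu_{F,\varepsilon}) \;=\; \frac{1}{|E|}\sum_{z\in E} f(s\zeta(z)),
\]
so
\[
\Bigl|\int f\,d\mu_{F,\varepsilon}-\int f\,d(s_*\mu_{F,\varepsilon})\Bigr|
\;\le\; \max_{z\in E}\bigl|f(\zeta(sz))-f(s\zeta(z))\bigr|.
\]
Given any finite $H\subseteq C(X)$ and $\delta>0$, uniform continuity of the functions in $H$ lets me choose the neighbourhood $\varepsilon$ of the diagonal small enough that $(x,y)\in\varepsilon$ forces $|f(x)-f(y)|<\delta$ for every $f\in H$. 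Once $s\in F$, the approximate equivariance $(\zeta(sz),s\zeta(z))\in\varepsilon$ then bounds the displayed difference by $\delta$. Thus, along the net, $|\int f\,d\mu_{F,\varepsilon}-\int f\,d(s_*\mu_{F,\varepsilon})|\to 0$ for every $f\in C(X)$ and every $s\in G$.

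Finally, I invoke compactness of $M_X$ in the weak$^*$ topology (valid for any compact Hausdorff $X$, via Banach--Alaoglu applied to the state space of $C(X)$) to extract a weak$^*$ cluster point $\mu\in M_X$ of the net. The estimate above passes to the limit and yields $\int f\,d\mu=\int f\,d(s_*\mu)$ for every $f\in C(X)$ and $s\in G$, so $s_*\mu=\mu$ for all $s\in G$, i.e.\ $\mu$ is $G$-invariant. No step here presents a genuine obstacle; the only thing to be mildly careful about is the uniform-continuity argument when $X$ is not metrizable, but writing everything in terms of diagonal neighbourhoods as above handles this cleanly.
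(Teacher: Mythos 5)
Your proof is correct and follows essentially the same route as the paper's: push forward the uniform measure on the finite approximating $G$-set to get a net $\{\mu_{F,\varepsilon}\}$ in $M_X$ and take a weak$^*$ cluster point, with the approximate equivariance forcing invariance in the limit. The paper leaves the invariance verification as ``easily verified''; you have simply written out that (correct) computation in full.
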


\begin{proof}
Let $\Lambda$ be the net of pairs $(F,\varepsilon )$ where $F$ is a finite subset of $G$ and 
$\varepsilon$ is a neighbourhood of the diagonal in $X\times X$ and the order relation
$(F' ,\varepsilon' )\succ (F,\varepsilon )$ means that $F' \supseteq F$ and 
$\varepsilon' \subseteq\varepsilon$. 
For a given $(F,\varepsilon )\in\Lambda$ there exist, by residual finiteness, a finite set $D$, an action of $G$ on $E$,
and a map $\zeta : E\to X$ such that $(\zeta (sz), s\zeta (z)) \in \varepsilon$ 
for all $z\in E$ and $s\in F$, and we define $\mu_{F,\varepsilon}$
to be the pullback under $\zeta$ of the uniform probability measure on $E$, i.e., 
$\mu_{F,\varepsilon} (f) = |E|^{-1} \sum_{z\in E} f(\zeta (z))$ for $f\in C(X)$.
Now take a weak$^*$ limit point of the net $\{ \mu_{F,\varepsilon} \}_{(F,\varepsilon )\in\Lambda}$, 
which is easily verified to be a $G$-invariant Borel probability measure on $X$.
\end{proof}

\begin{example}
Suppose that $G$ is residually finite. Let $X$ be a compact Hausdorff space. Then the Bernoulli action
$G\curvearrowright X^G$ given by $s(x_t )_{t\in G} = (x_{s^{-1} t} )_{t\in G}$ 
is easily seen to be residually finite.  
\end{example}

\begin{example}\label{E-alg}
Let $f$ be an element in the group ring $\Zb G$. Then $G$ acts on $\Zb G/\Zb Gf$ by left translation, 
and this gives rise to an action $\alpha_f$ of $G$ by automorphisms on the compact Abelian dual group 
$X_f := \widehat{\Zb G/\Zb Gf}$. When $f$ is equal to $d$ times the unit for some $d\in\Nb$ we
obtain the Bernoulli action $G\curvearrowright \{ 1,\dots ,d \}^G$. 
Suppose now that $G$ is residually finite and $f$ is invertible as an element in the full group 
$C^*$-algebra $C^* (G)$. Then the action $\alpha_f$ is residually finite.
Indeed in this case the points in $X_f$ which are fixed
by some finite-index normal subgroup of $G$ are dense in $X_f$, which can be seen as follows.
Let $H$ be a finite-index normal subgroup
of $G$. Then the canonical surjective ring homomorphism $\pi_H : \Zb G \to \Zb (G/H)$
induces a surjective ring homomorphism $\Zb G / \Zb Gf \to \Zb (G/H) / \Zb (G/H)\pi_H (f)$ which in
turn induces an injective group homomorphism $X_{\pi_H (f)} \hookrightarrow X_f$. Note that 
$X_{\pi_H (f)}$, identified with its image in $X_f$, 
is equal to the set of points in $X_f$ which are fixed by $H$. It thus suffices to show
that $\bigcup_{H\in\sH} X_{\pi_H (f)}$ is dense in $X_f$ where $\sH$ denotes the collection of 
finite-index normal subgroups of $G$. This happens precisely when the natural homomorphism 
$\Zb G / \Zb Gf \to \prod_{H\in\sH} \Zb (G/H)/\Zb (G/H)\pi_H (f)$ is injective. 
To verify this injectivity, let $g$ be an element
in the kernel. Then for every $H\in\sH$ there is a $w_H \in \Zb (G/H)$ such that $\pi_H (g) = w_H \pi_H (f)$.
Since $f$ is invertible in $C^* (G)$, $w_H$ is unique and its $\ell_2$-norm is bounded above 
by some constant not depending on $H$. Now if $H_1 , H_2 \in\sH$ and $H_1 \subseteq H_2$
then the image of $w_{H_1}$ in $\Zb (G/H_2 )$ under the canonical map $\Zb (G/H_1 )\to\Zb (G/H_2 )$
is equal to $w_{H_2}$, and so we can define the projective limit of the $w_H$ as an 
element $w$ in $\Zb^G$. Then $g=wf$. Moreover, $w$ lies in $\ell_2 (G)$ because the $\ell_2$-norms of the $w_H$
are uniformly bounded, and so $w$ has finite support since it takes integer values. 
Therefore $g\in\Zb Gf$ and we obtain the desired injectivity.

We remark that when $f$ is invertible in $\ell_1 (G)$ 
and $\{ G_i \}_{i=1}^\infty$ is a sequence of finite-index normal subgroups of $G$ with 
$\bigcap_{j=1}^\infty \bigcup_{i=j}^\infty G_i = \{ e \}$, 
the measure entropy of $\alpha_f$ with respect to the Haar measure and the sofic approximation sequence $\Sigma$
arising from $\{ G_i \}_{i=1}^\infty$ is equal to the exponential growth rate 
of the number of $G_i$-fixed points and to the logarithm of the Fuglede-Kadison
determinant of $f$ in the group von Neumann algebra of $G$ \cite{Bow10a}. This is also true
for the topological entropy with respect to $\Sigma$ more generally whenever $f$ is invertible in the 
full group $C^*$-algebra \cite{KerLi10a}. 
\end{example}

\section{Residually finite actions and MF algebras}\label{S-mf}

A separable $C^*$-algebra is said to be an {\it MF algebra} if it can be expressed as the inductive limit
of a generalized inductive system of finite-dimensional $C^*$-algebras \cite[Defn.\ 3.2.1]{BlaKir97}.
This is equivalent to the embeddability of $A$ into $\prod_{n=1}^\infty M_{k_n} / \bigoplus_{n=1}^\infty M_{k_n}$
for some sequence $\{ k_n \}_{n=1}^\infty$ in $\Nb$ \cite[Thm.\ 3.2.2]{BlaKir97}, as well as to
the existence of norm microstates (see Section~11.1 of \cite{BroOza08}). The following characterizations
are tailored for our purposes.

\begin{proposition}\label{P-MF}
Let $A$ be a separable $C^*$-algebra.
The following are equivalent. 
\begin{enumerate}
\item $A$ is an MF algebra,

\item for every $\varepsilon > 0$ and finite
set $\Omega\subseteq A$ there are a $k\in\Nb$ and a $^*$-linear map $\varphi : A \to M_k$ such that
$\| \varphi (ab) - \varphi (a)\varphi (b) \| < \varepsilon$ and 
$\big| \| \varphi (a) \| - \| a \| \big| < \varepsilon$ for all $a,b\in\Omega$,

\item there is a dense $^*$-subalgebra $A_0$ of $A$ such that for every $\varepsilon > 0$ and finite
set $\Omega\subseteq A_0$ there are a $k\in\Nb$ and a $^*$-linear map $\varphi : A_0 \to M_k$ such that
$\| \varphi (ab) - \varphi (a)\varphi (b) \| < \varepsilon$ and 
$\big| \| \varphi (a) \| - \| a \| \big| < \varepsilon$ for all $a,b\in\Omega$.
\end{enumerate}
Moreover, if $A$ is unital then the maps $\varphi$ in (2) and (3) may be taken to be unital.
\end{proposition}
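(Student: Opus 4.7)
The plan is to prove the cycle (2)$\Rightarrow$(3)$\Rightarrow$(1)$\Rightarrow$(2). Implication (2)$\Rightarrow$(3) is immediate on taking $A_0=A$, and (1)$\Rightarrow$(2) is the standard norm-microstates characterization of MF algebras mentioned in the paragraph preceding the proposition (see Section~11.1 of \cite{BroOza08}). The substantive content is entirely in (3)$\Rightarrow$(1).

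For (3)$\Rightarrow$(1), I would assemble the localized approximations into an isometric $^*$-embedding $A\hookrightarrow\prod_n M_{k_n}/\bigoplus_n M_{k_n}$. Using separability of $A$, choose a countable $^*$-subalgebra $B\subseteq A_0$ which is dense in $A$ (e.g., the one generated over $\Qb[i]$ by a countable dense subset of $A_0$), and enumerate $B=\{b_1,b_2,\dots\}$. For each $n$, let $\Omega_n$ consist of $b_1,\dots,b_n$, their adjoints, and all products $b_ib_j$ with $i,j\le n$, and apply (3) with tolerance $1/n$ to obtain $k_n\in\Nb$ and a $^*$-linear $\varphi_n:A_0\to M_{k_n}$ satisfying $|\|\varphi_n(b_i)\|-\|b_i\||<1/n$ for $i\le n$ and $\|\varphi_n(b_ib_j)-\varphi_n(b_i)\varphi_n(b_j)\|<1/n$ for $i,j\le n$. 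For each fixed $b_i$, the tail $(\varphi_n(b_i))_{n\ge i}$ is norm-bounded by $\|b_i\|+1$, so after zeroing the finitely many initial entries one obtains a well-defined class $\Phi(b_i)\in\prod_n M_{k_n}/\bigoplus_n M_{k_n}$. Since each $\varphi_n$ is $^*$-linear on $B$, the map $\Phi$ so defined on $B$ is $^*$-linear; the asymptotic multiplicativity on the $\Omega_n$ forces $\Phi(b_ib_j)=\Phi(b_i)\Phi(b_j)$ for all $i,j$, so $\Phi$ is in fact a $^*$-homomorphism; and $\|\Phi(b_i)\|=\lim_n\|\varphi_n(b_i)\|=\|b_i\|$ shows $\Phi$ is isometric. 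Continuous extension of $\Phi$ from the dense $^*$-subalgebra $B$ then yields an isometric $^*$-embedding $A\hookrightarrow\prod_n M_{k_n}/\bigoplus_n M_{k_n}$, witnessing $A$ as MF via Theorem~3.2.2 of \cite{BlaKir97}.

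For the unital addendum, assume $A$ is unital and apply (2) on $\Omega\cup\{\unit\}$ with a sufficiently small tolerance $\delta$. Then $\varphi(\unit)$ is self-adjoint with $\|\varphi(\unit)^2-\varphi(\unit)\|<\delta$, so its spectrum lies in $[-\sqrt\delta,\sqrt\delta]\cup[1-\sqrt\delta,1+\sqrt\delta]$ and functional calculus applied to the indicator of $[1/2,\infty)$ produces a projection $p\in M_k$ with $\|p-\varphi(\unit)\|\le\sqrt\delta$. Fixing any state $\omega$ on $A$, the $^*$-linear map
\[
\tilde\varphi:A\to pM_kp,\qquad \tilde\varphi(a):=p\varphi(a)p+\omega(a)(p-p\varphi(\unit)p),
\]
satisfies $\tilde\varphi(\unit)=p$ exactly, and its approximate multiplicativity and isometry follow from those of $\varphi$ together with $O(\sqrt\delta)$-sized corrections from $p-\varphi(\unit)$. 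Identifying $pM_kp\cong M_{k'}$ with $k'=\rank(p)$ and shrinking $\delta$ at the outset yields a unital map meeting the original tolerance $\varepsilon$.

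The main obstacle is the bookkeeping in (3)$\Rightarrow$(1): each $\varphi_n$ is $^*$-linear on all of $A_0$ but enjoys norm and multiplicativity control only on the finite set $\Omega_n$, so the enumeration and tolerances must be arranged so that the diagonal sequence simultaneously represents a well-defined class in $\prod_n M_{k_n}/\bigoplus_n M_{k_n}$ for each $b\in B$, respects the algebraic structure of $B$, and is isometric on a set dense in $A$, so that the continuous extension produces the desired embedding.
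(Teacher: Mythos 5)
Your proof of the core implication (3)$\Rightarrow$(1) is correct and is essentially the paper's argument: both build an isometric $^*$-homomorphism of a countable dense $^*$-subalgebra into $\prod_n M_{k_n}/\bigoplus_n M_{k_n}$ from a diagonal sequence of the maps furnished by (3) and then extend by continuity (the paper uses the sets $\Omega_n^n$ with $\bigcup_n\Omega_n^n$ a dense $^*$-subalgebra, you use an enumerated countable dense $^*$-subalgebra $B$ and put the relevant products into $\Omega_n$ by hand; the bookkeeping is equivalent). The two places where you diverge are worth noting. First, for (1)$\Rightarrow$(2) you cite the norm-microstates characterization, but condition (2) asks for a $^*$-linear map defined on \emph{all} of $A$, which the bare microstates statement (values assigned to finitely many elements, or to $^*$-polynomials in a generating set) does not immediately provide; the paper closes this by invoking Proposition~2.2.3(iii) of \cite{BlaKir97} to get an embedding $A\hookrightarrow\prod_n M_{k_n}/\bigoplus_n M_{k_n}$ along which representatives compute norms, lifting it to a globally defined $^*$-linear map $\psi:A\to\prod_n M_{k_n}$ (symmetrizing $a\mapsto(\psi(a)+\psi(a^*)^*)/2$ to force $^*$-linearity), and projecting to coordinates. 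Your citation can be repaired by a routine finite-dimensional extension of a partially defined map to a $^*$-linear map on $A$, but as written this step is glossed over. Second, your unital addendum compresses each individual $\varphi$ by the spectral projection $p$ of $\varphi(\unit)$ and corrects with a state $\omega$; this works (the $O(\sqrt\delta)$ error analysis you sketch goes through, using that $\unit$ and the products with $\unit$ lie in the enlarged test set so that $\varphi(\unit)$ acts approximately as a unit on $\varphi(\Omega)$), and it is a genuinely different, more local route than the paper's, which instead lifts the image of $\unit$ under the embedding to a projection $(p_n)$ in $\prod_n M_{k_n}$ and replaces the target by $\prod_n p_nM_{k_n}p_n/\bigoplus_n p_nM_{k_n}p_n$, so that all the coordinate maps $\psi_n$ are unital at once. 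The paper's version buys uniformity (one correction for all $n$ and all finite sets simultaneously); yours buys independence from the embedding picture. Both are standard and correct.
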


\begin{proof}
(1)$\Rightarrow$(2). 
By Proposition~2.2.3(iii) of \cite{BlaKir97}, $A$ embeds into $\prod_{n=1}^\infty M_{k_n} / \bigoplus_{n=1}^\infty M_{k_n}$
for some sequence $\{ k_n \}_{n=1}^\infty$ in such a way
that for each $a\in A$ one has $\lim_{n\to\infty} \| a_n \| = \| a \|$ for every $(a_n) \in\pi^{-1} (\{ a \})$
where $\pi : \prod_\lambda M_{k_n}\to \prod_\lambda M_{k_n} / \bigoplus_\lambda M_{k_n}$
is the quotient map.
Take a linear map $\psi : A\to\prod_{n=1}^\infty M_{k_n}$ such that $\pi\circ\psi = \Phi$. We may assume $\psi$
to be $^*$-linear by replacing it with 
$a\mapsto (\psi (a) + \psi (a^* )^* ))/2$ if necessary. For each $n$
let $\psi_n :A\to M_{k_n}$ be the composition of $\psi$ with the 
projection onto the $n$th factor. Then we have
$\lim_{n\to\infty} \| \psi_n (a) \| = \| a \|$ and 
$\lim_{n\to\infty} \| \psi_n (ab) - \psi_n (a)\psi_n (b) \| = 0$
for all $a,b\in A$, from which (2) follows. 

(2)$\Rightarrow$(3). Trivial.

(3)$\Rightarrow$(1). Since $A$ is separable there is an increasing sequence 
$\Omega_1 \subseteq\Omega_2 \subseteq\cdots$ of finite subsets of $A_0$ such that $\bigcup_{n=1}^\infty \Omega_n^n$
is a dense $^*$-subalgebra $A_1$ of $A$. For each $n\in\Nb$ take a $k_n \in\Nb$ and a $^*$-linear map 
$\varphi_n : A_0 \to M_{k_n}$ such that $\| \varphi_n (ab) - \varphi_n (a)\varphi_n (b) \| < \varepsilon$ and 
$\big| \| \varphi_n (a) \| - \| a \| \big| < \varepsilon$ for all $a,b\in\Omega_n^n$. Now define a map
$\theta : A \to\prod_{n=1}^\infty M_{k_n} \big/ \bigoplus_{n=1}^\infty M_{k_n}$
by composing $a\mapsto (\varphi_n (a))_{n=1}^\infty$ with the canonical projection map
$\prod_{n=1}^\infty M_{k_n} \to \prod_{n=1}^\infty M_{k_n} \big/ \bigoplus_{n=1}^\infty M_{k_n}$.
Then $\theta$ is isometric and multiplicative, and hence extends to an injective $^*$-homomorphism
$A\to \prod_{n=1}^\infty M_{k_n} \big/ \bigoplus_{k=1}^\infty M_{k_n}$, yielding (1).

In the case that $A$ is unital, note that in the proof of (1)$\Rightarrow$(2) the embedding $\Phi$ 
may be taken to be unital, since we can lift the image of the unit under 
$\Phi$ to a projection $(p_n )$ in $\prod_\lambda M_{k_n}$, 
producing an injective unital $^*$-homomorphism from $A$ to 
$\prod_{n=1}^\infty p_n M_{k_n} p_n / \bigoplus_{n=1}^\infty p_n M_{k_n} p_n$. This enables us
to arrange $\psi$ to be unital, so that each $\psi_n$ is unital. It follows that 
the maps $\varphi$ in conditions (2) and (3) may be taken to be unital.
\end{proof}

Note that the maps $\varphi$ in Proposition~\ref{P-MF} are typically not bounded.

A $C^*$-algebra is said to be {\it quasidiagonal} if it admits a faithful representation whose image is a
quasidiagonal set of operators. Voiculescu showed that a separable $C^*$-algebra $A$ is quasidiagonal if and only if 
for every $\varepsilon > 0$ and finite
set $\Omega\subseteq A$ there are a $n\in\Nb$ and a contractive completely positive map $\varphi : A \to M_n$ such that
$\| \varphi (ab) - \varphi (a)\varphi (b) \| < \varepsilon$ and 
$\| \varphi (a) \| \geq \| a \| - \varepsilon$ for all $a,b\in\Omega$ \cite{Voi91}.
Blackadar and Kirchberg showed that a separable nuclear $C^*$-algebra is an MF algebra if and only 
if it is quasidiagonal \cite[Thm.\ 5.2.2]{BlaKir97}. The reduced group $C^*$-algebra of a free group
on two or more generators is an example of an MF algebra which is not quasidiagonal \cite{HaaTho05}.

Voiculescu's abstract characterization of quasdiagonality motivates the following definition,
which extends the concept of residual finiteness to actions on noncommutative $C^*$-algebras 
in view of Proposition~\ref{P-rf qd}. We will formulate and prove Theorems~\ref{T-qd mf} and \ref{T-qd qd} 
within this general noncommutative framework.

\begin{definition}\label{D-qd}
Let $\alpha$ be an action of $G$ on a separable $C^*$-algebra $A$. 
We say that $\alpha$ is {\it quasidiagonal} 
if for every finite set $\Omega\subseteq A$, finite set $F\subseteq G$, and $\varepsilon > 0$ there
are an $d\in\Nb$, an action $\gamma$ of $G$ on $M_d$, and a unital completely positive map $\varphi : A\to M_d$ such that
\begin{enumerate}
\item $\| \varphi (ab) - \varphi (a)\varphi (b) \| < \varepsilon$ for all $a,b\in\Omega$,

\item $\| \varphi (a) \| \geq \| a \| - \varepsilon$ for all $a\in\Omega$, and

\item $\| \varphi (\alpha_s (a)) - \gamma_s (\varphi (a)) \| < \varepsilon$ for all $a\in\Omega$ and $s\in F$.
\end{enumerate} 
\end{definition}

Note that the existence of a quasidiagonal action on $A$ in the sense of the above definition implies that 
$A$ is quasidiagonal as a $C^*$-algebra.

\begin{proposition}\label{P-rf qd}
Let $\alpha$ be a residually finite continuous action of $G$ 
on a compact metrizable space $X$. Then $\alpha$ is quasidiagonal as an action of $G$ on $C(X)$.
\end{proposition}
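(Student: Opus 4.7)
The plan is to build $\varphi$ as the diagonal pullback along an approximately equivariant map $\zeta \colon E \to X$ supplied by residual finiteness, and to take $\gamma$ to be conjugation by the permutation unitaries coming from the $G$-action on $E$. Given a finite $\Omega \subseteq C(X)$, a finite $F \subseteq G$, and $\varepsilon > 0$, I would first fix a compatible metric $d$ on $X$ and use uniform continuity to choose $\delta > 0$ such that $d(x,y) < \delta$ implies $|f(x) - f(y)| < \varepsilon$ for every $f \in \Omega$. Applying residual finiteness to the finite set $F \cup F^{-1}$ and this $\delta$ yields a finite $G$-set $E$ together with a map $\zeta \colon E \to X$ whose image is $\delta$-dense and which satisfies $d(\zeta(tz), t\zeta(z)) < \delta$ for every $z \in E$ and $t \in F \cup F^{-1}$. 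Enlarging $F$ to $F \cup F^{-1}$ here is the one mildly nonobvious bookkeeping step.

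Setting $n = |E|$ and identifying $M_n$ with $B(\ell^2(E))$, let $u_s \in M_n$ be the permutation unitary determined by $u_s \delta_z = \delta_{sz}$, put $\gamma_s = \Ad u_s$, and define $\varphi \colon C(X) \to M_n$ by $\varphi(f) = \sum_{z \in E} f(\zeta(z)) e_{z,z}$, where $\{ e_{z,w} \}$ denotes the standard system of matrix units. Because $\varphi$ factors as the unital $^*$-homomorphism $C(X) \to \ell^\infty(E) \subseteq M_n$ induced by $\zeta$, it is unital completely positive and condition (1) of Definition~\ref{D-qd} holds exactly. Condition (2) then follows immediately from $\|\varphi(f)\| = \max_{z \in E} |f(\zeta(z))|$ together with the $\delta$-density of $\zeta(E)$ and the choice of $\delta$.

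For condition (3), a direct computation gives $\gamma_s(\varphi(f)) = \sum_{w \in E} f(\zeta(s^{-1} w)) e_{w,w}$ after reindexing by $w = sz$, while $\varphi(\alpha_s f) = \sum_{w \in E} f(s^{-1} \zeta(w)) e_{w,w}$, so the operator norm of the difference equals $\max_{w \in E} |f(\zeta(s^{-1} w)) - f(s^{-1} \zeta(w))|$. Since $s^{-1} \in F \cup F^{-1}$, the approximate equivariance of $\zeta$ gives $d(\zeta(s^{-1} w), s^{-1} \zeta(w)) < \delta$, and uniform continuity then bounds this difference by $\varepsilon$. The proof has no real obstacle: commutativity of $C(X)$ is what allows $\varphi$ to be a genuine $^*$-homomorphism, so approximate multiplicativity comes for free, and what remains is only the quantitative passage from the dynamical approximation to the operator-algebraic one via uniform continuity.
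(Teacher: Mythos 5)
Your proof is correct and follows essentially the same route as the paper: pull back along the approximately equivariant map $\zeta$ to get a genuine $^*$-homomorphism into the diagonal of $M_{|E|}$, let the $G$-action on $E$ act by conjugation with permutation unitaries, and convert the dynamical $\delta$-approximations into norm estimates via uniform continuity. Your explicit replacement of $F$ by $F\cup F^{-1}$ and the identification of $C(E)$ with diagonal matrices are just slightly more careful renderings of steps the paper leaves implicit.
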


\begin{proof}
Fix a compatible metric $d$ on $X$. Let $\varepsilon > 0$ and let $\Omega$ be a finite subset of $C(X)$.
By uniform continuity there is a $\delta > 0$ such that $|f(x) - f(y)| < \varepsilon$ for all $f\in\Omega$ 
and all points $x,y\in X$ satisfying $d(x,y) < \delta$.
By residual finiteness there are a finite set $E$, an action $\gamma$ of $G$ on $E$,
and a map $\zeta : E\to X$ such that $\zeta (E)$ is $\delta$-dense in $X$ and
$d(\zeta (s^{-1} z), s^{-1} \zeta (z)) < \delta$ for all $z\in E$ and $s\in F$. 
Define $\varphi : C(X)\to C(E)$ by $\varphi (f)(z) = f(\zeta (z))$ for all $f\in C(X)$ and $z\in E$.
Then $\varphi$ is a homomorphism, and for all $f\in\Omega$ and $s\in F$ we have
\[
\| \varphi (f) \| = \sup_{x\in\zeta (E)} |f(x)| \geq \| f \| - \varepsilon 
\]
and
\[
\| \varphi (\alpha_s (f)) - \gamma_s (\varphi (f)) \| 
= \sup_{z\in E} \big| f(s^{-1} \zeta (z)) - f(\zeta (s^{-1} z) \big| < \varepsilon ,
\]
showing that $\alpha$ is quasidiagonal.
\end{proof}

The proof of the following theorem is reminiscent of part of the proof of Theorem~7 in \cite{Pim83}.

\begin{theorem}\label{T-qd mf}
Suppose that $C^*_\red (G)$ is an MF algebra.
Let $\alpha$ be a quasidiagonal action of $G$ on a separable $C^*$-algebra $A$.
Then $A\rtimes_{\red} G$ is an MF algebra.
\end{theorem}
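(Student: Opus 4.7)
The plan is to apply Proposition~\ref{P-MF}(3) with the dense $^*$-subalgebra $A_0:=\spa\{au_s:a\in A,\,s\in G\}$ of $A\rtimes_\red G$. Given a finite set $\Omega\subseteq A_0$ expressed via a finite $\Omega_A\subseteq A$ and a finite $F\subseteq G$, together with $\varepsilon>0$, I apply quasidiagonality of $\alpha$ to sufficiently enlarged data (closed under products, adjoints, and $\alpha_s$ for $s\in F\cup F^{-1}\cup F^{-1}F$) with a small error $\varepsilon_1>0$, obtaining a UCP map $\varphi_A:A\to M_d$, a genuine $G$-action $\gamma$ on $M_d$ with $\gamma_s=\Ad(w_s)$ for unitaries $w_s\in M_d$, and the stipulated approximate multiplicativity, isometry, and equivariance of $\varphi_A$. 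The main obstacle is that $s\mapsto w_s$ is typically only a projective representation carrying a $\Tb$-valued $2$-cocycle $\sigma(s,t)=w_sw_tw_{st}^{-1}$, and this cocycle obstructs a clean combination of $\varphi_A$ with the MF structure of $C^*_\red(G)$.

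To kill the cocycle I use a doubling trick. Let $\bar w:G\to U(d)$ denote the complex-conjugate projective representation, which has cocycle $\bar\sigma$, and set $\bar\gamma_s:=\Ad(\bar w_s)$. Replacing the original data by $\tilde\varphi_A(\cdot):=\varphi_A(\cdot)\otimes 1\in M_d\otimes M_d=M_{d^2}$, $\tilde\gamma:=\gamma\otimes\bar\gamma$, and $\tilde w_s:=w_s\otimes\bar w_s$ gives $\tilde w_s\tilde w_t=|\sigma(s,t)|^2\tilde w_{st}=\tilde w_{st}$, so $\tilde w:G\to\cU(M_{d^2})$ is a genuine unitary representation implementing $\tilde\gamma$, and $\tilde\varphi_A$ inherits the approximate properties with respect to $\tilde\gamma$. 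Fell's absorption principle then yields an isometric $^*$-isomorphism $M_{d^2}\rtimes_{\red,\tilde\gamma}G\cong M_{d^2}\otimes C^*_\red(G)$ via $mv_s\mapsto m\tilde w_s\otimes u_s$. I then define the $^*$-linear map
\[
\Phi:A_0\to M_{d^2}\otimes C^*_\red(G),\qquad \Phi(au_s):=\tilde\varphi_A(a)\tilde w_s\otimes u_s.
\]
A standard short computation using $\tilde w_s\tilde w_t=\tilde w_{st}$ together with the approximate multiplicativity and equivariance of $\tilde\varphi_A$ shows
\[
\Phi(au_s)\Phi(bu_t)=\tilde\varphi_A(a)\tilde\gamma_s(\tilde\varphi_A(b))\tilde w_{st}\otimes u_{st}\approx\tilde\varphi_A(a\alpha_s(b))\tilde w_{st}\otimes u_{st}=\Phi((au_s)(bu_t)),
\]
giving approximate multiplicativity of $\Phi$ on $\Omega$. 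Since $C^*_\red(G)$ is MF and MF is stable under tensoring with matrix algebras, $M_{d^2}\otimes C^*_\red(G)$ is MF, and composing $\Phi$ with a sufficiently accurate matrix approximation $\Theta:M_{d^2}\otimes C^*_\red(G)\to M_k$ supplied by Proposition~\ref{P-MF}(2) will produce the required map $A\rtimes_\red G\to M_k$.

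The hard part is approximate isometry $\|\Phi(x)\|\approx\|x\|$ on $\Omega$. I would establish it by comparing the canonical covariant representation $(\pi_\alpha,\lambda)$ of $A\rtimes_\red G$ on $H\otimes\ell_2(G)$ (for a faithful embedding $A\subseteq B(H)$) with the standard representation of $M_{d^2}\otimes C^*_\red(G)$ on $\Cb^{d^2}\otimes\ell_2(G)$. After conjugation by the unitary $U(\zeta\otimes\delta_t):=\tilde w_t\zeta\otimes\delta_t$, approximate $\tilde\gamma$-equivariance renders $U^*\Phi(au_s)U$ structurally parallel to the expression $au_s=\sum_r\alpha_{r^{-1}}(a)\otimes|\delta_r\rangle\langle\delta_{s^{-1}r}|$ in $B(H\otimes\ell_2(G))$, with each coefficient $\alpha_{r^{-1}}(a)$ replaced by $\tilde\varphi_A(\alpha_{r^{-1}}(a))$. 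A Voiculescu-style transplantation of near-maximizing vectors from $H$ to $\Cb^{d^2}$, exploiting the approximate isometry of $\tilde\varphi_A$ on products of the form $a_s^*a_t$ and their $\alpha$-translates (which appear when computing $\|x\|^2=\|x^*x\|$), will then deliver the estimate $\|\Phi(x)\|\approx\|x\|$, after which Proposition~\ref{P-MF}(3) completes the proof that $A\rtimes_\red G$ is MF.
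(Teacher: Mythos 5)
Your architecture matches the paper's almost exactly: the doubling $w_s\otimes\bar w_s$ is precisely the paper's passage to the standard form $\cB(L^2(M_d,\tau))\cong M_{d^2}$, your $\Phi(au_s)=\tilde\varphi_A(a)\tilde w_s\otimes u_s$ is the paper's map $\Theta$, the multiplicativity computation is the same, and the final reduction to the MFness of $M_k\otimes C^*_\red(G)$ together with Proposition~\ref{P-MF} is identical. The gap is in the step you yourself flag as hard, the estimate $\|\Phi(x)\|\geq\|x\|-\varepsilon$, and it is genuine.

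The difficulty is that you work with a \emph{single} map $\varphi_A\colon A\to M_d$ produced by one application of Definition~\ref{D-qd}. Approximate multiplicativity and approximate norm preservation of $\tilde\varphi_A$ on a finite set control the norms of finitely many individual elements of $A$, but transplanting a near-maximizing vector $\eta=\sum_t\xi_t\otimes\delta_t$ from $\cH\otimes\ell_2(G)$ into $\Cb^{d^2}\otimes\ell_2(G)$ requires more: you must approximately reproduce the matrix states $b\mapsto\langle b\xi_t,\xi_{t'}\rangle$ of the fixed faithful essential representation by matrix states factoring through $\tilde\varphi_A$. A single finite-dimensional map need not permit this even when it is exactly multiplicative and isometric: for $A=\Cb^2=C(\{p,q\})$ represented faithfully and essentially on $\ell_2$, the $^*$-homomorphism $f\mapsto\diag(f(p),f(q),\dots,f(q))\in M_k$ is isometric, yet every matrix state through it assigns a rank-one coefficient to the point $p$ and so cannot approximate $\langle\cdot\,\xi_i,\xi_j\rangle$ for two orthonormal vectors $\xi_1,\xi_2$ in the $p$-eigenspace. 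Knowing $\|\tilde\varphi_A(b)\|\approx\|b\|$ for the finitely many products $\alpha_{r^{-1}}(a_{s'}^*a_s)$ does not repair this, since the reduced crossed product norm of $\sum_s a_su_s$ is not a function of the norms of finitely many polynomials in the coefficients. The paper's remedy is structural: it takes a sequence $\varphi_n$ witnessing quasidiagonality with errors tending to zero, forms the \emph{infinite} direct sum $\Phi_N=(\varphi_N,\varphi_{N+1},\dots)$, and invokes Voiculescu's theorem in the form of Theorem~2.10 of \cite{Bro04} to obtain a genuine unitary $U$ with $\|U\Phi_N(b)U^{-1}-b\|$ small on the relevant finite set; that unitary is what licenses the transplantation in both norm inequalities, and it has no analogue for a single $M_d$. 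Your argument should therefore be rebuilt around the infinite direct sum $\bigoplus_{n\geq N}D_n$, with truncation to a finite direct sum (and hence to a subalgebra of $M_k\otimes C^*_\red(G)$) postponed until after the norm estimates are in hand; with that modification the remainder of your outline goes through as written.
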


\begin{proof}
We view $A$ as acting on a separable Hilbert space $\cH$ via some faithful essential representation and
$A\rtimes_{\red} G$ as acting on $\cH\otimes\ell^2 (G)$ in the standard way, as determined
by $au_s (\xi\otimes\delta_t ) = \alpha_{st}^{-1} (a)\xi\otimes\delta_{st}$ 
for $a\in A$, $s,t\in G$, and $\xi\in\cH$, where $\{ \delta_s : s\in G \}$ is the canonical basis of $\ell^2 (G)$.

Since $\alpha$ is quasidiagonal there exist, for each $n\in\Nb$, a positive integer $k_n$,
an action $\gamma_n$ of $G$ on $M_{k_n}$, and a unital completely positive map $\varphi_n : A\to M_{k_n}$
so that
\begin{enumerate}
\item $\lim_{n\to\infty} \| \varphi_n (ab) - \varphi_n (a) \varphi_n (b) \| = 0$ for all $a,b\in A$,

\item $\lim_{n\to\infty} \| \varphi_n (a) \| = \| a \|$ for all $a\in A$, and

\item $\lim_{n\to\infty} \| \varphi_n (\alpha_s (a)) - \gamma_{n,s} (\varphi_n (a)) \| = 0$ for all $a\in A$ and $s\in G$.
\end{enumerate}
Note that if we view $M_{k_n}$ as acting on $L^2 (M_{k_n} ,\tau )$ via the GNS construction with respect to
the unique tracial state $\tau$, then the action $\gamma$ is implemented through conjugation by the
unitaries $w_s$ for $s\in G$ defined by $w_s x\eta = \gamma_s (x)\eta$ where $\eta$ is the canonical cyclic vector. 
Thus by replacing $M_{k_n}$ with $\cB (L^2 (M_{k_n} ,\tau ))\cong M_{k_n^2}$ and relabeling we may assume that
there are unitary representations $w_n : G\to\cU (\ell_2^{k_n} )$ such that
\[
\lim_{n\to\infty} \| \varphi_n (\alpha_s (a)) - w_{n,s} \varphi_n (a) w_{n,s}^* \| = 0
\]
for all $a\in A$ and $s\in G$.

Let $\Omega$ be a finite subset of the algebraic crossed product $A\rtimes_\alg G$, which by definition consists of
sums of the form $\sum_{s\in F} a_s u_s$ where $F$ is a finite subset of $G$ and each $a_s$ lies in $A$. Let $\varepsilon > 0$.
We will show the existence of a $d\in\Nb$ and $^*$-linear map $\beta : A\rtimes_\alg G \to M_d$ such that 
$\| \beta (cc') - \beta (c)\beta (c' ) \| < \varepsilon$ and $\big| \| \beta (c) \| - \| c \| \big| < \varepsilon$
for all $c,c'\in\Omega$. This is sufficient to complete the proof by Proposition~\ref{P-MF}.

Regard $M_{k_n}$ as acting on $\ell_2^{k_n}$ in the standard way.
For $N\in\Nb$ we define the unital completely positive map 
$\Phi_N : A\to \bigoplus_{n=N}^\infty M_{k_n}$
by 
\[
\Phi_N (a) = (\varphi_N (a), \varphi_{N+1} (a), \dots ) .
\] 
Write $D_n$ for the $C^*$-subalgebra of $\cB (\ell_2^{k_n} \otimes\ell_2 (G))$
generated by $M_{k_n} \otimes 1$ and the operators $w_{n,s} \otimes\lambda_s$ for $s\in G$.
Define a $^*$-linear map 
$\Theta_N : A\rtimes_\alg G \to \bigoplus_{n=N}^\infty D_n$ by setting
\[
\Theta_N (au_s ) = (\varphi_N (a)w_{N,s} \otimes\lambda_s , \varphi_{N+1} (a)w_{N+1,s} \otimes\lambda_s ,\dots )
\]
for $a\in A$ and $s\in G$ and extending linearly, which we can do 
since the subspaces $Au_s$ for $s\in G$ are orthogonal with respect to the canonical conditional
expectation from $A\rtimes_\red G$ onto $A$.
We aim to show the existence of an $N\in\Nb$ such that
\begin{enumerate}
\item $\| \Theta_N (cc') - \Theta_N (c)\Theta_N (c') \| < \varepsilon /2$ for all $c,c' \in \Omega$, and

\item $\big| \| \Theta_N (c) \| - \| c \| \big| < \varepsilon /2$ for all $c \in \Omega$.
\end{enumerate}

Since $\Omega$ is finite we can find an $\varepsilon' > 0$ such that, for all $n\in\Nb$ and $c\in\Omega$, if 
$\big| \| \Theta_n (c) \|^2 - \| c \|^2 \big| < \varepsilon'$ then $\big| \| \Theta_n (c) \| - \| c \| \big| < \varepsilon /2$.
Take a finite set $F\subseteq G$ such that for every $c\in\Omega$ we can write
$c = \sum_{s\in F} a_{c,s} u_s$ where $a_{c,s} \in A$ for each $s\in F$. 
Set $M = \max_{c\in\Omega ,s\in F} \| a_{c,s} \|$.
For each $c\in\Omega$ take a unit vector $\eta_c$ in $\cH\otimes\ell^2 (G)$ such that
$\| c \| \leq \| c \eta_c \| + \varepsilon' /2$
and $\eta_c = \sum_{t\in K} \xi_{c,t} \otimes\delta_t$ for some finite set $K\subseteq G$ and vectors $\xi_{c,t} \in\cH$.
For notational simplicity we may assume $K$ to be the same for all $c\in\Omega$.

Take a $\delta > 0$ such that $(1+2|F||K|)|F|^2 \delta^2 \leq \varepsilon' /2$. 
Since the unital completely positive maps $\varphi_n$ 
are asymptotically multiplicative and asymptotically isometric, 
by the version of Voiculescu's theorem which appears as Theorem~2.10 in \cite{Bro04} and stems from \cite{Sal77}
we can find a $N\in\Nb$ and a unitary $U : \bigoplus_{n=N}^\infty \ell_2^{k_n} \to \cH$ 
such that
\begin{align*}
\big\| U\Phi_N (\alpha_{st}^{-1} (a_{c,s} ))  U^{-1} - \alpha_{st}^{-1} (a_{c,s} )\big\| < \frac{\delta}{2} 
\end{align*}
for all $c\in\Omega$, $s\in F$, and $t\in K$.
For $s\in G$ set $w_s = (w_{N,s} , w_{N+1,s} ,\dots ) \in \bigoplus_{n=N}^\infty M_{k_n}$.
By the asymptotic equivariance of the maps $\varphi_k$, we may assume that $N$ is large enough so that
\begin{align*}
\big\| \Phi_N (\alpha_{st}^{-1} (a_{c,s} )) - w_{st}^* \Phi_N (a_{c,s} )w_{st} \big\| < \frac{\delta}{2}
\end{align*}
and hence
\begin{align*}
\big\| w_{st} U^{-1} \alpha_{st}^{-1} (a_{c,s} ) - \Phi_N (a_{c,s} )w_{st} U^{-1} \big\| < \delta
\end{align*}
for all $c\in\Omega$, $s\in F$, and $t\in K$. We may moreover assume that $N$ is large enough so that
$\| \Theta_N (cc') - \Theta_N (c)\Theta_N (c') \| < \varepsilon /2$ for all $c,c' \in \Omega$, since
for $a,b\in A$, and $s,t\in G$ we have
\begin{align*}
\| \Theta_N (au_s bu_t ) - \Theta_N (au_s )\Theta_N (bu_t ) \| 
&= \sup_{n\geq N} \| \varphi_n (a\alpha_s (b))w_{n,st} - \varphi_n (a)w_{n,s} \varphi_n (b)w_{n,t} \| \\
&\leq \sup_{n\geq N} \| (\varphi_n (a\alpha_s (b)) - \varphi_n (a) \varphi_n (\alpha_s (b)))w_{n,st} \| \\
&\hspace*{10mm} \ + \sup_{n\geq N} \| \varphi_n (a) (\varphi (\alpha_s (b)) w_{n,s} - w_{n,s} \varphi_n (b)) w_{n,t} \| ,
\end{align*}
where the last two suprema tend to zero as $N\to\infty$, and the subspaces $Au_s$ for $s\in G$ 
linearly span $A\rtimes_\alg G$.

Let $c\in\Omega$. Write $\tilde{U}$ for the unitary operator from 
$( \bigoplus_{n=N}^\infty \ell_2^{k_n} ) \otimes \ell_2 (G)$ to $\cH\otimes \ell_2 (G)$
given by $\tilde{U} (\zeta\otimes\delta_t ) = Uw_t^{-1} \zeta\otimes\delta_t$.
Since $\eta_c$ is a unit vector, for $s\in F$ we have
\begin{align*}
\lefteqn{\bigg\| \sum_{t\in K} \big( w_{st} U^{-1} \alpha_{st}^{-1} (a_{c,s} ) 
- \Phi_N (a_{c,s} )w_{st} U^{-1} \big)\xi_{c,t} \otimes\delta_{st} \bigg\|^2}\hspace*{30mm} \\
\hspace*{30mm} &= \sum_{t\in K} \big\| \big( w_{st} U^{-1} \alpha_{st}^{-1} (a_{c,s} )
- \Phi_N (a_{c,s} )w_{st} U^{-1} \big)\xi_{c,t} \big\|^2 \\
&\leq \sum_{t\in K} \big\| w_{st} U^{-1} \alpha_{st}^{-1} (a_{c,s} )
- \Phi_N (a_{c,s} )w_{st} U^{-1} \big\|^2 \| \xi_{c,t} \|^2 \\
&< \delta^2 .
\end{align*}
For any vectors $x_1 , \dots ,x_n , y_1 , \dots ,y_n \in\cH$ we have 
\begin{align*}
\bigg\|\sum_{i=1}^n x_i \bigg\|^2 &\leq 
\bigg( \bigg\|\sum_{i=1}^n y_i \bigg\| + \bigg\|\sum_{i=1}^n (x_i - y_i )\bigg\| \bigg)^2 \\
&\leq \bigg\|\sum_{i=1}^n y_i \bigg\|^2 
+ \bigg(1+2\bigg\|\sum_{i=1}^n y_i \bigg\|\bigg)\bigg\|\sum_{i=1}^n (x_i - y_i )\bigg\|^2 
\end{align*} 
and so applying this inequality and the crude bound
\begin{align*}\tag{$\ast$}
\| \Theta_N (c) \tilde{U}^{-1} \eta_c \|
= \bigg\| \sum_{s\in F} \sum_{t\in K} \Phi_N (a_{c,s} )w_{st} U^{-1} \xi_{c,t} \otimes\delta_{st} \bigg\| 
\leq \sum_{s\in F} \sum_{t\in K} \| \Phi_N (a_{c,s} ) \| 
\leq |F||K| .
\end{align*}
we obtain
\begin{align*}
\| c\eta_c \|^2 
&= \bigg\| \tilde{U}^{-1} \sum_{s\in F} \sum_{t\in K} \alpha_{st}^{-1} (a_{c,s} ) \xi_{c,t} \otimes\delta_{st} \bigg\|^2 \\ 
&= \bigg\| \sum_{s\in F} \sum_{t\in K} w_{st} U^{-1} \alpha_{st}^{-1} (a_{c,s} ) \xi_{c,t} \otimes\delta_{st} \bigg\|^2 \\ 
&\leq \| \Theta_N (c) \tilde{U}^{-1} \eta_c \|^2 
+ \big(1+2\| \Theta_N (c) \tilde{U}^{-1} \eta_c \| \big) \\
&\hspace*{25mm} \ \times \bigg( \sum_{s\in F} \bigg\| \sum_{t\in K} 
\big( w_{st} U^{-1} \alpha_{st}^{-1} (a_{c,s} ) - \Phi_N (a_{c,s} )w_{st} U^{-1} \big)\xi_{c,t} \otimes\delta_{st} \bigg\|
\bigg)^2 \\
&\leq \| \Theta_N (c) \|^2 + (1+2|F||K|)|F|^2 \delta^2 \\
&= \| \Theta_N (c) \|^2 + \frac{\varepsilon'}{2} .
\end{align*}
Consequently
$\big\| \Theta_N (c) \big\|^2 \geq \| c \eta_c \|^2 - \varepsilon' /2 \geq \| c \|^2 - \varepsilon'$.

Next let us show that 
$\| c \|^2 \geq \| \Theta_N (c) \|^2 - \varepsilon'$. 
Take a unit vector $\eta$ in $\cH\otimes\ell^2 (G)$ such that
$\| \Theta_N (c) \| \leq \| \Theta_N (c) \tilde{U}^{-1} \eta \| + \varepsilon' /2$. The idea is to argue as in the above paragraph,
reversing the roles of $c$ and $\Theta_N (c)$ and replacing $\eta_c$ with $\eta$. Notice that 
the only way the particular choice of the vectors $\eta_c$ entered into the above estimates, besides their
being of norm one, was in obtaining the bound ($\ast$). Here however we can simply take 
$\| \tilde{U}^{-1} c \eta \| \leq \| c \|$ 
as the replacement for this bound, in which case
\begin{align*} 
\| \Theta_N (c) \tilde{U}^{-1} \eta \|^2
&= \bigg\| \sum_{s\in F} \sum_{t\in K} \Phi_N (a_{c,s} ) w_{st} U^{-1} \xi_{c,t} \otimes\delta_{st} \bigg\|^2 \\ 
&\leq \| \tilde{U}^{-1} c\eta \|^2 + \big(1+2\| \tilde{U}^{-1} c\eta \| \big) \\
&\hspace*{15mm} \ \times
\bigg( \sum_{s\in F} \bigg\| \sum_{t\in K} \big( \Phi_N (a_{c,s} )w_{st} U^{-1} 
- w_{st} U^{-1} \alpha_{st}^{-1} (a_{c,s} ) \big)\xi_{c,t} \otimes\delta_{st} \bigg\| \bigg)^2 \\
&\leq \| c \|^2 + 3|F|^2\delta^2 \\
&= \| c \|^2 + \frac{\varepsilon'}{2} 
\end{align*}
and hence $\| c \|^2 \geq \| \Theta_N (c) \tilde{U}^{-1} \eta \|^2 - \varepsilon' /2 \geq \| \Theta_N (c) \|^2 - \varepsilon'$.
By our choice of $\varepsilon'$ we conclude that $\big| \| \Theta_N (c) \| - \| c \| \big| < \varepsilon /2$ for all $c\in\Omega$, 
so that the map $\Theta_N$ satisfies the desired properties.

Now let $M$ be an integer greater than or equal to $N$ to be determined shortly.
Set $D = \bigoplus_{n=N}^M D_n$.
Define the $^*$-linear map $\theta : A\rtimes_\alg G \to D$ by declaring that
\[
\theta (au_s ) = (\varphi_N (a)w_{n,s} \otimes\lambda_s , \dots , \varphi_M (a)w_{n,s} \otimes\lambda_s ) 
\] 
for $a\in A$ and $s\in G$ and extending linearly.
In view of the properties of $\Theta_N$ we have $\| \theta (cc') - \theta (c)\theta (c') \| < \varepsilon /3$ 
for all $c,c' \in \Omega$ and, by taking $M$ large enough, 
$\big| \| \theta (c) \| - \| c \| \big| < \varepsilon /2$ for all $c \in \Omega$.

Set $k = \sum_{n=N}^M k_n$. Note that $D$ is a $C^*$-subalgebra of
$\cB (\bigoplus_{n=N}^M \ell_2^{k_n} )\otimes C^*_\red (G)$ as canonically represented on 
$(\bigoplus_{n=N}^M \ell_2^{k_n} )\otimes\ell_2 (G)$ where the latter is identified in the standard way 
with $\bigoplus_{n=N}^M (\ell_2^{k_n} \otimes\ell_2 (G))$. 
Via some fixed identification of $\cB (\bigoplus_{n=N}^M \ell_2^{k_n} )$ with $M_k$ we view
$D$ as a $C^*$-subalgebra of $M_k \otimes C^*_\red (G)$.
By hypothesis $C^*_\red (G)$ is MF, and hence so is $M_k \otimes C^*_\red (G)$, for
if $C^*_\red (G) \hookrightarrow \prod_{n=1}^\infty M_{k_n} \big/ \bigoplus_{n=1}^\infty M_{k_n}$
is an embedding witnessing the fact that $C^*_\red (G)$ is MF then we obtain an embedding
\[
M_k \otimes C^*_\red (G) \hookrightarrow 
M_k \otimes \Bigg(\prod_{n=1}^\infty M_{k_n} \bigg/ \bigoplus_{n=1}^\infty M_{k_n} \Bigg) \cong 
\prod_{n=1}^\infty \big( M_k \otimes M_{k_n} \big) \bigg/ \bigoplus_{n=1}^\infty \big( M_k \otimes M_{k_n} \big) .
\]
Thus by Proposition~\ref{P-MF} there is a $d\in\Nb$ and a $^*$-linear map 
$\varphi : M_k \otimes C^*_\red (G)\to M_d$ such that,
for all $c,c' \in\Omega$,
\begin{enumerate}
\item $\| \varphi (\theta (c) \theta (c')) - \varphi (\theta (c)) \varphi (\theta (c')) \| < \varepsilon /3$,

\item $\| \varphi (\theta (cc') - \theta (c) \theta (c')) \| 
< \| \theta (cc') - \theta (c) \theta (c') \| + \varepsilon /3$, and

\item $| \| \varphi (\theta (c)) \| - \| \theta (c) \| \big| < \varepsilon /2$.
\end{enumerate}
Set $\beta = \varphi\circ\theta$. Then $\beta$ is $^*$-linear, and for all $c,c'\in\Omega$ we have
\begin{align*}
\| \beta (cc') - \beta (c)\beta (c' ) \|
&\leq \| \varphi (\theta (cc' ) - \theta (c) \theta (c' )) \| +
\| \varphi (\theta (c) \theta (c' )) - \varphi (\theta (c)) \varphi (\theta (c' )) \| \\
&\leq \| \theta (cc' ) - \theta (c) \theta (c' ) \| + \frac{\varepsilon}{3} + \frac{\varepsilon}{3} 
< \varepsilon 
\end{align*}
and
\begin{align*}
\big| \| \beta (c) \| - \| c \| \big| 
\leq \big| \| \varphi (\theta (c)) \| - \| \theta (c) \| \big| 
+ \big| \| \theta (c) \| - \| c \| \big|
< \frac{\varepsilon}{2} + \frac{\varepsilon}{2} = \varepsilon ,
\end{align*}
completing the proof.
\end{proof}

\begin{theorem}\label{T-qd qd}
Suppose that $C^*_\red (G)$ is quasidiagonal.
Let $\alpha$ be a quasidiagonal action of $G$ on a separable nuclear $C^*$-algebra $A$.
Then $A\rtimes_{\red} G$ is quasidiagonal.
\end{theorem}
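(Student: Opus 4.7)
The plan is to reduce this theorem to Theorem~\ref{T-qd mf} via the observation that the quasidiagonality hypothesis on $C^*_\red (G)$ already forces $G$ to be amenable, which in combination with nuclearity of $A$ collapses the distinction between MF and quasidiagonal for the crossed product.

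First I would invoke Hadwin's theorem \cite{Had87}, cited in the introduction, which states that quasidiagonality of $C^*_\red (G)$ implies that $G$ is amenable. Once $G$ is amenable, the reduced and maximal crossed products coincide for any action, and since $A$ is nuclear the common crossed product $A\rtimes_\red G$ is itself nuclear.

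Next, I would note that quasidiagonality of $C^*_\red (G)$ trivially implies the MF property, since any asymptotic UCP factorization through matrix algebras is in particular an asymptotic $^*$-linear factorization in the sense of Proposition~\ref{P-MF}. Theorem~\ref{T-qd mf} then applies and tells us that $A\rtimes_\red G$ is an MF algebra.

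Finally, the Blackadar--Kirchberg equivalence \cite[Thm.~5.2.2]{BlaKir97} asserts that a separable nuclear $C^*$-algebra is MF if and only if it is quasidiagonal. Applying this to $A\rtimes_\red G$, which we have just seen is both nuclear and MF, yields the desired quasidiagonality. There is no substantial obstacle once the amenability reduction has been noticed; the proof is essentially a short combination of Theorem~\ref{T-qd mf} with two cited results.
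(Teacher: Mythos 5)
Your proposal is correct and follows essentially the same route as the paper: amenability of $G$ via the Rosenberg appendix to \cite{Had87}, nuclearity of the crossed product, Theorem~\ref{T-qd mf} to get the MF property, and the Blackadar--Kirchberg equivalence of MF and quasidiagonality for separable nuclear algebras. The only nitpick is attribution: the fact that quasidiagonality of $C^*_\red(G)$ forces amenability is due to Rosenberg (in the appendix of Hadwin's paper), not to Hadwin himself.
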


\begin{proof}
By a result of Rosenberg (see the appendix of \cite{Had87}), the quasidiagonality of $C^*_\red (G)$ implies that $G$ is amenable. 
Since $A$ is nuclear, it follows that the crossed product $A\rtimes_{\red} G$ is nuclear (see Section~IV.3.5 of \cite{Bla06}). 
Since $C^*_\red (G)$ is quasidiagonal it is an MF algebra, and so $A\rtimes_{\red} G$ is an MF algebra by Theorem~\ref{T-qd mf}.
Since separable nuclear MF algebras are quasidiagonal \cite[Thm.\ 5.2.2]{BlaKir97}, we conclude that $A\rtimes_{\red} G$ 
is quasidiagonal.
\end{proof}

An action of $G$ on a compact Haudorff space $X$ is said to be {\it topologically free} if the set
of points in $X$ with trivial isotropy group is dense.

\begin{corollary}
Suppose that $G$ is amenable. Let $G\curvearrowright X$ be a topologically free residually finite action 
on a compact metrizable space. Then $C(X) \rtimes_{\red} G$ is quasidiagonal.
\end{corollary}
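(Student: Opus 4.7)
The plan is to show that the hypotheses force the acting group itself to be residually finite, at which point the quasidiagonality follows by combining Dadarlat's characterization with Proposition~\ref{P-rf qd} and Theorem~\ref{T-qd qd}.

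First I would establish that $G$ is a residually finite group. Fix a compatible metric $d$ on $X$ and, given a finite set $\{ s_1 ,\dots ,s_n \} \subseteq G\setminus\{ e \}$, choose by topological freeness a point $x_0 \in X$ with trivial isotropy. Then $\delta := \min_i d(x_0 ,s_i x_0 ) > 0$, and by uniform continuity there is an open neighbourhood $\nbhd$ of $x_0$ such that $d(sy,sx_0 ) < \delta /4$ for every $y\in\nbhd$ and $s\in\{ e,s_1 ,\dots ,s_n \}$; a triangle inequality then gives $d(y,s_i y) > \delta /2$ for all $y\in\nbhd$ and all $i$. Applying residual finiteness to $F = \{ s_1 ,\dots ,s_n \}$ with $\varepsilon$ small enough that any $\varepsilon$-dense set meets $\nbhd$ and $\varepsilon < \delta /4$, I obtain a finite set $E$, an action $G\curvearrowright E$, and a map $\zeta : E\to X$ with $\zeta (E)$ $\varepsilon$-dense and $d(\zeta (s_i z),s_i \zeta (z)) < \varepsilon$ for all $z\in E$. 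Picking $z\in E$ with $\zeta (z) \in\nbhd$, another triangle inequality forces $d(\zeta (s_i z),\zeta (z)) > 0$, so $s_i z\neq z$. Thus the kernel $N$ of the induced homomorphism $G\to\Sym (E)$ is a finite-index normal subgroup avoiding each $s_i$, proving that $G$ is residually finite.

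Next, since $G$ is amenable and residually finite, Dadarlat's theorem \cite[Cor.\ 4]{Dad99} (cited in the introduction) tells us that $C^*_\red (G)$ is residually finite-dimensional, hence in particular quasidiagonal. Moreover, Proposition~\ref{P-rf qd} shows that the residually finite action of $G$ on $X$ is quasidiagonal in the sense of Definition~\ref{D-qd}, and $C(X)$ is nuclear because it is commutative. All the hypotheses of Theorem~\ref{T-qd qd} are therefore satisfied, and it yields the quasidiagonality of $C(X)\rtimes_\red G$.

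The only nonroutine step is the first one, extracting residual finiteness of $G$ from the dynamical data; the rest is an immediate assembly of already-established results. I do not anticipate any genuine obstacle, since the perturbation argument in the first paragraph is precisely the kind of estimate that topological freeness is designed to support and the approximating finite actions come essentially for free from the definition.
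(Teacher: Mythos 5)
Your proposal is correct and follows essentially the same route as the paper: deduce that $G$ is a residually finite group from the topologically free residually finite action, invoke \cite[Cor.\ 4]{Dad99} together with amenability to get quasidiagonality of $C^*_\red (G)$, and then apply Proposition~\ref{P-rf qd} and Theorem~\ref{T-qd qd}. The only difference is that you spell out the perturbation argument behind the step the paper dismisses as ``easy to verify,'' and your version of that argument is sound.
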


\begin{proof}
Because it admits a topologically free residually finite action, $G$ must be a residually finite group, as
is easy to verify. Since the full and reduced group $C^*$-algebras of $G$ coincide by amenability, 
it follows that $C_\red^* (G)$ is quasidiagonal \cite[Cor.\ 4]{Dad99}. 
Since $C(X)$ is nuclear we obtain the conclusion by Theorem~\ref{T-qd qd}.
\end{proof}

Now we fix an $r\in \{ 2,3,\dots,\infty \}$ and concentrate on actions of the free group $F_r$
for the remainder of the section. Our aim is to show that, for continuous actions of $F_r$
on a zero-dimensional compact metrizable space, residual finiteness is equivalent to the reduced crossed product
being an MF algebra. Note that such a crossed product cannot be quasidiagonal since it contains $C^*_\red (F_r )$,
which, by a result of Rosenberg \cite{Had87}, is not quasidiagonal since $F_r$ is nonamenable.
The property of being an MF algebra is the appropriate substitute for quasidiagonality in the nonamenable case.
Indeed quasidiagonality and the property of being an MF algebra are equivalent for separable nuclear $C^*$-algebras
\cite[Thm.\ 5.2.2]{BlaKir97}, and so quasidiagonality in the general group action setting can viewed as an artifact of nuclearity.

The following two lemmas are standard types of perturbation results.

\begin{lemma}\label{L-proj cut}
Let $\eta > 0$. Then there is a $\delta > 0$ such that whenever
$d\in\Nb$ and $p$ and $q$ are projections in $M_d$ satisfying $\| pq \| < \delta$ there exists
a projection $q' \in (1-p)M_d (1-p)$ such that $\| q' - q \| < \eta$.
\end{lemma}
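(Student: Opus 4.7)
The plan is to show that the natural candidate for $q'$, namely the cut-down $a := (1-p)q(1-p)$, is close to $q$ and is ``almost'' a projection, and then to use continuous functional calculus to produce a genuine projection nearby inside the corner $(1-p)M_d(1-p)$.

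First I would expand
\[
q - (1-p)q(1-p) = pq + qp - pqp,
\]
and observe that since $\| qp \| = \| (pq)^* \| = \| pq \| < \delta$, one obtains
\[
\| q - a \| \leq 2\delta + \delta^2 .
\]
In particular $a$ is self-adjoint, lies in $(1-p)M_d (1-p)$, and satisfies $0 \leq a \leq 1-p$.

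Next, since $q$ is a projection,
\[
a^2 - a = a(a-q) + (a-q)q + (q - a),
\]
so $\| a^2 - a \| \leq (\| a \| + \| q \| + 1) \| a-q \| \leq 3(2\delta+\delta^2)$. Thus the polynomial $t \mapsto t^2 - t$ is small on the spectrum of $a$, which forces $\spec (a) \subseteq [0,\kappa ] \cup [1-\kappa ,1]$ for some $\kappa = \kappa (\delta )$ with $\kappa \to 0$ as $\delta \to 0$.

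Choose then a continuous function $f:[0,1]\to [0,1]$ that vanishes on $[0,\kappa ]$, equals $1$ on $[1-\kappa ,1]$, and is linear in between. Because $f(0)=0$, the element $q' := f(a)$ lies in the C*-subalgebra of $(1-p)M_d (1-p)$ generated by $a$, and because $f^2 = f$ on $\spec (a)$, it is a projection. A routine functional-calculus estimate comparing $f$ with the identity function on the spectrum, combined with the bound on $\| a-q \|$, gives
\[
\| q' - q \| \leq \| q' - a \| + \| a-q \| \leq \kappa + 2\delta + \delta^2 ,
\]
which can be made smaller than $\eta$ by choosing $\delta$ small enough (uniformly in $d$, since none of the estimates depend on the dimension). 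The only subtle point is verifying that the spectral gap $\kappa$ tends to $0$ with $\delta$, but this follows at once from the scalar fact that $|t^2 - t| \leq \varepsilon$ implies $\dist (t, \{ 0,1 \} ) \to 0$ as $\varepsilon \to 0$.
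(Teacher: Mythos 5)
Your proposal is correct and follows essentially the same route as the paper: both take the cut-down $a=(1-p)q(1-p)$, bound $\| q-a \|$ by a multiple of $\| pq \|$, show $\| a^2 - a \|$ is small, and then invoke the continuous functional calculus to produce the nearby projection $q'$ in the corner. You merely spell out the functional-calculus step (the spectral gap and the bump function $f$) that the paper leaves implicit, and your bound $\| q-a \| \leq 2\delta + \delta^2$ is a slight sharpening of the paper's $3\delta$.
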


\begin{proof}
Let $\delta$ be a strictly positive number less than $\eta /6$ to be further specified.
Let $p$ and $q$ be projections in some matrix algebra $M_d$ such that $\| pq \| < \delta$.
Set $a = (1-p)q(1-p)$. Then 
\begin{align*}
\| q - a \| = \| pq+qp-pqp \| &\leq 3 \| pq \| < 3\delta .
\end{align*}
Since 
\begin{align*}
\| a^2 -a \| \leq \| a^2 - q^2 \| + \| q - a \| < \| (a-q)a \| + \| q(a-q) \| + 3\delta < 9\delta ,
\end{align*}
if $\delta$ is small enough as a function of $\eta$ there exists, by the functional calculus, a projection $q' \in (1-p)M_d (1-p)$
such that $\| q' - a \| < \eta /2$. Since $\delta < \eta /6$, we have
$\| q' - q \| \leq \| q' - a \| + \| a - q \| < \eta /2 + \eta /2 = \eta$, as desired.
\end{proof}

\begin{lemma}\label{L-hom}
Let $n\in\Nb$ and $\varepsilon > 0$. Then there exists a $\delta > 0$ such that whenever $d\in\Nb$ and 
$a_1 ,\dots ,a_n$ are $n$ self-adjoint elements in $M_d$ satisfying
\begin{enumerate}
\item $\| a_i^2 - a_i \| < \delta$ for all $i=1,\dots ,n$,

\item $\| a_i a_j \| < \delta$ for all distinct $i,j=1,\dots ,n$, and

\item $\sum_{i=1}^n a_i = 1$
\end{enumerate}
there exist pairwise orthogonal projections $p_1 ,\dots ,p_n \in M_d$ such that $\sum_{i=1}^n p_i = 1$ 
and $\| p_i - a_i \| < \varepsilon$ for all $i=1,\dots ,n$.
\end{lemma}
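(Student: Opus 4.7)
The plan is to use functional calculus to replace each $a_i$ by an honest projection close to it, to orthogonalise these projections iteratively by repeated application of Lemma~\ref{L-proj cut}, and to take the last of them to be the complement of the sum of the rest so that hypothesis (3) absorbs the error on the final coordinate.

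Concretely, hypothesis (1) forces the spectrum of the self-adjoint element $a_i$ to sit in a small neighbourhood of $\{0,1\}$, so the spectral projection $q_i := \chi_{[1/2,\infty)}(a_i)$ satisfies $\|q_i - a_i\| = O(\sqrt{\delta})$. Combined with (2), this yields
\[
\|q_i q_j\| \;\leq\; \|q_i - a_i\|\,\|q_j\| + \|a_i\|\,\|q_j - a_j\| + \|a_i a_j\| \;=\; O\bigl(\sqrt{\delta}\bigr)
\]
for $i\neq j$. I would then build the $p_i$ recursively. Set $p_1 := q_1$, and suppose pairwise orthogonal projections $p_1,\dots,p_{k-1}$ have been produced with each $\|p_i - q_i\|$ under control; since $p_1+\cdots+p_{k-1}$ is again a projection and
\[
\bigl\|(p_1+\cdots+p_{k-1})\,q_k\bigr\| \;\leq\; \sum_{i<k}\bigl(\|q_i q_k\| + \|p_i - q_i\|\bigr)
\]
is small, Lemma~\ref{L-proj cut} produces a projection $p_k$ in $(1-p_1-\cdots-p_{k-1})M_d(1-p_1-\cdots-p_{k-1})$ close to $q_k$. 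Finally take $p_n := 1 - p_1 - \cdots - p_{n-1}$; this is automatically a projection orthogonal to all the preceding ones, and hypothesis (3) gives
\[
p_n - a_n \;=\; \sum_{i<n}(a_i - p_i),
\]
so it inherits the perturbation bound from the others.

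The only subtlety is that each invocation of Lemma~\ref{L-proj cut} introduces a fresh error, so the admissible $\delta$ must be chosen recursively small in a way that depends on $n$. Since $n$ is fixed, this is a routine backwards computation: starting from the demand that each $\|p_i - a_i\| < \varepsilon/n$ (which in particular forces $\|p_n - a_n\| < \varepsilon$), one pulls back through the $n-1$ iterative perturbations and through the functional calculus step $\|q_i - a_i\| = O(\sqrt{\delta})$. This bookkeeping is the main, and indeed only, obstacle.
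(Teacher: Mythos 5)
Your proposal is correct and follows essentially the same route as the paper: functional calculus to produce nearby projections $q_i$, iterative orthogonalisation via Lemma~\ref{L-proj cut} with a recursively chosen cascade of tolerances depending on $n$, and hypothesis (3) to force $\sum_{i=1}^n p_i = 1$. The only cosmetic difference is that you define $p_n$ as the complement $1 - \sum_{i<n} p_i$ and bound $\| p_n - a_n \|$ using (3), whereas the paper constructs all $n$ projections by the same orthogonalisation and then observes that the residual projection $1 - \sum_{i=1}^n p_i$ has norm less than $1$ and hence vanishes; both variants work.
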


\begin{proof}
We may assume that $\varepsilon \leq 1$. 
Using Lemma~\ref{L-proj cut} we successively choose numbers $\delta_n > \delta_{n-1} > \dots > \delta_1 > 0$ such that
$\delta_n = \varepsilon /2n$ and, for each $k=1,\dots ,n-1$, whenever
$d\in\Nb$ and $p$ and $q$ are projections in $M_d$ satisfying $\| pq \| < (k+1)\delta_k$ there exists
a projection $q' \in (1-p)M_d (1-p)$ such that $\| q' - q \| < \delta_{k+1}$.
Let $\delta$ be a strictly positive number less than $\delta_1 /4n$ to be further specified.
Let $a_1 ,\dots ,a_n$ be $n$ self-adjoint elements in some matrix algebra $M_d$ such that
$\| a_i^2 - a_i \| < \delta$ for all $i=1,\dots ,n$, $\| a_i a_j \| < \delta$ for all distinct $i,j=1,\dots ,n$, and
$\sum_{i=1}^n a_i = 1$. By the functional calculus, we may take $\delta$ to be small enough as a function of 
$\varepsilon$ in order to be able to find projections $q_1 , \dots , q_n \in M_d$ such that 
$\| a_i - q_i \| < \varepsilon /2n$ for all $i=1,\dots ,n$.
Then for all distinct $i,j=1,\dots ,n$ we have 
\begin{align*}
\| q_i q_j \| &\leq \| (q_i - a_i )q_j \| + \| a_i (q_j - a_j ) \| + \| a_i a_j \| \\
&\leq \| q_i - a_i \| + (\| q_i \| + \| a_i - q_i \| ) \| q_j - a_j \| + \delta < 4\delta .
\end{align*}

Using our choice of the numbers $\delta_n ,\dots ,\delta_1$ as given by Lemma~\ref{L-proj cut},
we successively construct pairwise orthogonal projections
$p_1 , \dots ,p_n \in M_d$ such that $\| p_i - q_i \| < \varepsilon /2n$ for all $i=1,\dots ,n$.
More precisely, at the $k$th stage we use the fact that the projections $p = \sum_{i=1}^k p_i$ and $q = q_{k+1}$ satisfy
\begin{align*}
\| pq \| \leq \sum_{i=1}^k \| p_i q_{k+1} \| &\leq \sum_{i=1}^k \big( \| p_i - q_i \| + \| q_i q_{k+1} \| \big) \\
&\leq \delta_1 + \cdots + \delta_k + 4k\delta < (k+1)\delta_k 
\end{align*}
to obtain a projection $p_{k+1}$ with $\| p_{k+1} - q_{k+1} \| < \delta_{k+1}$.
We then have $\| p_i - a_i \| < \varepsilon /n \leq\varepsilon$ for all $i=1,\dots ,n$, and 
\[
\bigg\| 1 - \sum_{i=1}^n p_i \bigg\| = \bigg\| \sum_{i=1}^n (a_i - p_i ) \bigg\|
\leq \sum_{i=1}^n \| a_i - p_i \| < n\cdot\frac{\varepsilon}{n} \leq 1 ,
\]
which shows that the projection $1-\sum_{i=1}^n p_i$ must be zero, i.e., $\sum_{i=1}^n p_i = 1$.
\end{proof}

\begin{lemma}\label{L-Cantor mf rf}
Let $r\in \{ 2,3,\dots,\infty \}$. Let $X$ be a zero-dimensional compact metrizable space 
and $F_r \curvearrowright X$ a continuous action.
Suppose that $C(X)\rtimes_\red F_r$ is an MF algebra. Then the action is residually finite.
\end{lemma}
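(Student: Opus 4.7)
The plan is to extract the finite dynamical model directly from a matrix model provided by the MF property, using zero-dimensionality of $X$ to replace functions by genuine projections via clopen partitions.

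Fix finite $F\subseteq F_r$ and $\varepsilon>0$. By uniform continuity and the triangle inequality applied iteratively to reduced words, it suffices to verify the residual finiteness condition at some smaller parameter $\varepsilon'>0$ when $F=\{s_1,\dots,s_\ell\}$ is a subset of a free generating set of $F_r$ containing all generators involved in the original $F$. Using zero-dimensionality, choose a clopen partition $\{V_1,\dots,V_n\}$ of $X$ with $\diam V_i<\varepsilon/3$ and $\diam s_jV_i<\varepsilon/3$ for every $j$, and pick representatives $x_i\in V_i$; write $p_i=\chi_{V_i}$. Apply Proposition~\ref{P-MF} to obtain, for any sufficiently small $\delta>0$, a unital $^*$-linear map $\varphi:C(X)\rtimes_\red F_r\to M_d$ that is $\delta$-multiplicative and $\delta$-isometric on a test set including $\{p_i\}$, $\{u_{s_j}, u_{s_j}^*\}$, $\{\chi_{s_jV_i}=u_{s_j}p_iu_{s_j}^*\}$, and the finer products $\{\chi_{V_{i'}\cap s_jV_i}\}$. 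Using Lemma~\ref{L-hom}, perturb $\{\varphi(p_i)\}$ to pairwise orthogonal projections $q_1,\dots,q_n\in M_d$ summing to $1$, set $d_i=\rank q_i$, and perturb each $\varphi(u_{s_j})$ to a genuine unitary $v_j\in\cU(M_d)$ using that $\varphi(u_{s_j})$ is approximately unitary.

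The key approximate relation is $v_jq_iv_j^*\approx\varphi(\chi_{s_jV_i})$. Applying Lemma~\ref{L-hom} within each corner $q_{i'}M_dq_{i'}$ to the approximate partition $\{\varphi(\chi_{V_{i'}\cap s_jV_i})\}_i$ of $q_{i'}$ yields pairwise orthogonal projections $r_{i',j,i}\in q_{i'}M_dq_{i'}$ summing to $q_{i'}$, and then a second perturbation (adjusting ranks against a genuine $F_r$-invariant Borel probability measure $\nu$ on $X$, obtained as a weak$^*$-limit of the approximately invariant probabilities $\tau_d\circ\varphi|_{C(X)}$ across a sequence of MF approximations, cf.\ Proposition~\ref{P-res fin measure}) produces an exact doubly-stochastic integer matrix $a^{(j)}_{i,i'}:=\rank r_{i',j,i}$ with row sums $d_i$, column sums $d_{i'}$, and $a^{(j)}_{i,i'}=0$ whenever $V_{i'}\cap s_jV_i=\emptyset$. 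This transportation plan realizes a bijection $\pi_j$ of $E:=\bigsqcup_iE_i$ with $|E_i|=d_i$, sending exactly $a^{(j)}_{i,i'}$ elements of $E_i$ into $E_{i'}$. Finally, define $\zeta:E\to X$ by $\zeta(z)=x_i$ for $z\in E_i$ and extend $\pi_1,\dots,\pi_\ell$ to an $F_r$-action on $E$ using freeness. Then $\zeta(E)=\{x_1,\dots,x_n\}$ is $\varepsilon/3$-dense, and for $z\in E_i$ with $\pi_j(z)\in E_{i'}$ the nonemptiness of $V_{i'}\cap s_jV_i$ together with the diameter bounds yields $d(\zeta(\pi_j(z)),s_j\zeta(z))\leq\diam V_{i'}+\diam s_jV_i<2\varepsilon/3$, and iterating across reduced words recovers residual finiteness for the original $F$.

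The principal obstacle is the simultaneous exact rank compatibility in the middle step: ensuring the perturbed projections $r_{i',j,i}$ have integer ranks that sum correctly on both sides to form a bona fide integer transportation plan with the prescribed zero pattern, when the underlying data from the MF approximation is only approximate. The key device for overcoming this is the existence of a genuine $F_r$-invariant probability measure on $X$ coming from weak$^*$-compactness, which fixes the correct marginals and allows the approximate rank data to be rounded into exact doubly-stochastic form compatible with the adjacency structure imposed by the clopen partition.
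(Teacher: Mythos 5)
Your overall strategy is sound and is essentially the one the paper uses: encode the dynamics in projections via zero-dimensionality, extract an approximately multiplicative unital $^*$-linear map $\varphi$ from the MF property via Proposition~\ref{P-MF}, rigidify the images of the clopen partition into genuine pairwise orthogonal projections with Lemma~\ref{L-hom}, replace $\varphi(u_{s_j})$ by honest unitaries $v_j$, and then read off a finite model by counting ranks. The paper organizes the bookkeeping differently: it applies $\varphi$ to the single commutative $C^*$-subalgebra $A$ generated by all the $u_kp_iu_k^*$, perturbs $\varphi|_A$ in one stroke to a unital $^*$-homomorphism $\Psi$ (the minimal projections of $A$ sum exactly to $1$, so Lemma~\ref{L-hom} applies verbatim), embeds $\Psi(A)$ in a diagonal masa $B$, and produces permutation matrices $w_k$ with $w_k\Psi(p_i)w_k^*=\Psi(u_kp_iu_k^*)$; the finite set is then the pure state space of $B$. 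Your transportation matrix over the refined partition $\{V_{i'}\cap s_jV_i\}$ is an equivalent way of building the same permutations.

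The genuine problem is your proposed resolution of what you correctly single out as the crux, the exact matching of ranks. The invariant measure $\nu$ cannot do that job: the marginals of $a^{(j)}$ are the integers $d_i=\rank q_i$, which are determined by the particular matrix model and have nothing to do with $\nu$; the numbers $d\cdot\nu(V_i)$ need not be integers and need not equal $d_i$, and ``rounding against $\nu$'' gives no control over the zero pattern forced by $V_{i'}\cap s_jV_i=\emptyset$. It is also circular in spirit, since the existence of an invariant measure is downstream of residual finiteness (Proposition~\ref{P-res fin measure}). Fortunately no measure is needed: the rank identities are automatic from norm perturbation. For fixed $i$ and $j$, the projection $\sum_{i'}r_{i',j,i}$ lies within small norm distance of $\varphi(\chi_{s_jV_i})=\varphi(u_{s_j}p_iu_{s_j}^*)$, which in turn lies within small norm distance of the projection $v_jq_iv_j^*$; two projections at distance less than $1$ are homotopic and hence have the same trace, so $\sum_{i'}\rank r_{i',j,i}=\rank(v_jq_iv_j^*)=d_i$, which is exactly the row-sum condition (the column sums $\sum_i\rank r_{i',j,i}=d_{i'}$ hold by construction). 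This ``equal trace because homotopic'' step is precisely how the paper closes the argument, and with it your transportation plan exists with no further adjustment. Two smaller points to repair: $\varphi(\chi_{V_{i'}\cap s_jV_i})$ does not lie in the corner $q_{i'}M_dq_{i'}$ and its compressions sum to $q_{i'}\varphi(p_{i'})q_{i'}\neq q_{i'}$, so Lemma~\ref{L-hom} cannot be invoked inside the corner without a preliminary correction (one reason the paper prefers to perturb all of $A$ at once); and you should record that each $d_i>0$, via $\|\varphi(p_i)\|\geq\|p_i\|-\delta>0$, so that every $E_i$ is nonempty and $\zeta(E)$ really is $\varepsilon$-dense.
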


\begin{proof}
Since the MF property passes to $C^*$-subalgebras and residual finiteness is a condition
that is witnessed on finitely many group elements at a time, we may assume that $r$ is finite.
We may also assume that $X$ is infinite, for otherwise the action is automatically residually finite.
Then by metrizability we can take an enumeration $q_1 , q_2 , \dots$ of the projections in $C(X)$.
On $X$ we define the compatible metric 
\[
d(x,y) = \sum_{i=1}^\infty 2^{-j} | q_j (x) - q_j (y) | .
\]

Let $\varepsilon > 0$.
Choose a $n\in\Nb$ such that $2^{-n+1} < \varepsilon$.
Take a partition of unity $\cP = \{ p_1 ,\dots ,p_m \} \subseteq C(X)$ consisting of nonzero projections 
such that (i) each of the projections $q_1 , \dots , q_n$ is a sum of projections in $\cP$, and
(ii) the clopen subsets of $X$ of which the projections in $\cP$ are characteristic
functions all have diameter less than $\varepsilon$.

Write $u_1 , \dots ,u_r$ for the canonical unitaries in the crossed product $C(X)\rtimes_\red F_r$
corresponding to the standard generators $s_1 , \dots , s_r$ of $F_r$. 
Write $\Omega$ for the set of all elements in $C(X)\rtimes_\red F_r$ of the form 
$up$ or $pu^*$ where $p\in\cP\cup \{ 1 \}$ and $u\in \{ 1,u_1 ,\dots ,u_r \}$. Write $A$ for the unital commutative
$C^*$-subalgebra of $C(X)\rtimes_\red F_r$ generated by elements of the form $upu^*$ where
$p\in\cP$ and $u\in \{ 1,u_1 ,\dots ,u_r \}$.
Let $\delta > 0$ be such that $((1+\delta )^2 + (1+\delta ) + 1)\delta < 1/4$.
Let $\delta'$ be a strictly positive number less than $\delta$ to be further specified.
Since $C(X)\rtimes_\red F_r$ is MF, by Proposition~\ref{P-MF} there exist a $d\in\Nb$ and a 
unital $^*$-linear map $\varphi : A \to M_d$ such that 
\begin{enumerate} 
\item $\big| \| \varphi (a) \| - \| a \| \big| < \delta \| a \|$ for all $a\in A$,

\item $\| \varphi (u_k p_i u_k^* u_l p_j u_l^* ) - \varphi (u_k p_i u_k^* )\varphi (u_l p_j u_l^* ) \| < \delta'$ 
for all $i,j=1,\dots ,m$ and $k,l=1,\dots,r$,

\item $\| \varphi (u_k p_i u_k^* ) - \varphi (u_k )\varphi (p_i u_k^* ) \| < \delta$ 
for all $i=1,\dots ,m$ and $k=1,\dots,r$,

\item $\| \varphi (p_i u_k^* ) - \varphi (p_i )\varphi (u_k^* ) \| < \delta$ 
for all $i=1,\dots ,m$ and $k=1,\dots,r$,

\item $\| \varphi (u_k )\varphi (u_k )^* - 1 \| < \delta'$ for all $k=1,\dots,r$.
\end{enumerate}
Since every minimal projection in $A$ has the form $p_{i_0} u_1 p_{i_1} u_1^* \cdots u_r p_{i_r} u_r^*$
for some $i_0 , \dots ,i_r \in \{ 1,\dots ,m \}$, by repeated use of (1) and (2) and the triangle inequality 
we see that by taking $\delta'$ small enough we can arrange for 
$\| \varphi (p)\varphi (p' ) \|$ and $\| \varphi (p)^2 - \varphi (p) \|$ to be as small as we wish
for all distinct minimal projections $p,p' \in A$.
Thus assuming $\delta'$ to be small enough we can construct a unital homomorphism
$\Psi : A\to M_d$ such that $\| \Psi (a) - \varphi (a) \| < \delta \| a \|$ for all $a\in A$ by using Lemma~\ref{L-hom}
to perturb the images of the minimal projections in $A$ under $\varphi$ to pairwise orthogonal projections in $M_d$
and defining $\Psi (p)$ for a minimal projection $p\in A$ to be the perturbation of $\varphi (p)$.
Take a maximal commutative subalgebra $B$ of $M_d$ containing $\Psi (A)$.
By recoordinatizing via an automorphism of $M_d$, we may assume $B$ to be the diagonal subalgebra of $M_d$.

In view of condition (3) above, for each $k=1,\dots ,r$ we can define the unitary
$v_k = \varphi (u_k )/\sqrt{\varphi (u_k )\varphi (u_k )^*}$ in $M_d$ and we may arrange that
$\| \varphi (u_k ) - v_k \| < \delta$ by taking $\delta'$ to be small enough.
For $i=1,\dots ,m$ and $k=1,\dots ,r$ we have
\begin{align*}
\lefteqn{\| \varphi (u_k )\varphi (p_i )\varphi (u_k )^* - v_k \Psi (p_i )v_k^* \|} \hspace*{20mm} \\
\hspace*{20mm} &\leq \| (\varphi (u_k ) - v_k )\varphi (p_i )\varphi (u_k )^* \| 
+ \| v_k (\varphi (p_i ) - \Psi (p_i )) \varphi (u_k )^* \| \\
&\hspace*{30mm} \ + \| v_k \Psi (p_i ) (\varphi (u_k ) - v_k )^* \| \\
&\leq (1+\delta )^2 \| \varphi (u_k ) - v_k \| + (1+\delta )\| \varphi (p_i ) - \Psi (p_i ) \| 
+ \| \varphi (u_k ) - v_k \| \\
&\leq ((1+\delta )^2 + (1+\delta ) + 1)\delta 
< \frac14
\end{align*}
in which case
\begin{align*}
\lefteqn{\| \Psi (u_k p_i u_k^* ) - v_k \Psi (p_i )v_k^* \|}\hspace*{10mm} \\
\hspace*{10mm} &\leq \| \Psi (u_k p_i u_k^* ) - \varphi (u_k p_i u_k^* ) \| 
+ \| \varphi (u_k p_i u_k^* ) - \varphi (u_k )\varphi (p_i u_k^* ) \| \\
&\hspace*{10mm} \ + \| \varphi (u_k ) \| \| \varphi (p_i u_k^* ) - \varphi (p_i )\varphi (u_k^* ) \| 
+ \| \varphi (u_k )\varphi (p_i )\varphi (u_k )^* - v_k \Psi (p_i )v_k^* \| \\
&< \delta + \delta + (1+\delta )\delta + \frac14 
< 1 .
\end{align*}
It follows that $\Psi (u_k p_i u_k^* )$ and $v_k \Psi (p_i )v_k^*$ are homotopic projections,
and thus have the same trace.
But the trace of $v_k \Psi (p_i )v_k^*$ is the same as the trace of $\Psi (p_i )$.
Thus for all $i=1,\dots ,m$ and $k=1,\dots ,r$ the projections $\Psi (u_k p_i u_k^* )$ and $\Psi (p_i )$ have the same trace.
It follows that for each $k=1,\dots ,r$ we can find a permutation matrix
$w_k \in M_k$ such that $w_k \Psi (p_i )w_k^* = \Psi (u_k p_i u_k^* )$ for all $i=1,\dots ,m$.
Note that since $w_k$ is a permutation matrix we have $w_k B w_k^* = B$. 

Write $P(B)$ for the pure state space of $B$, which has $d$ elements.
Let $\omega\in P(B)$. Since $A$ is a unital $C^*$-subalgebra of $C(X)$, by Gelfand theory there exists
a point $x_\omega \in X$ which, when viewed as a pure state $\hat{x}_\omega$ on $C(X)$, restricts to $\omega\circ\Psi$ on $A$.
Set $\zeta (\omega ) = x_\omega$. This defines a map $\zeta : P(B)\to X$. Now for each $i=1,\dots ,m$
the projection $\Phi (p_i )$ is nonzero since $\| \Phi (p_i ) \| \geq \| \varphi (p_i ) \| - \delta \geq \| p_i \| - 2\delta > 0$, 
and so there is an $\omega\in P(B)$ such that $\omega (\Phi (p_i )) = 1$ and hence
$p_i (x_\omega ) = \hat{x}_\omega (p_i) = 1$. It follows that the image of $\zeta$ is $\varepsilon$-dense 
in $X$ since the supports of the projections $p_1 , \dots , p_m$ all have diameter less than $\varepsilon$. 

Let $F_r$ act on $P(B)$ so that for each $k=1,\dots ,r$ the generator $s_k$ acts as the bijection 
$\omega\mapsto\omega\circ\Ad w_k^*$. Then
for $\omega\in P(B)$, $k=1,\dots ,r$, and $i=1,\dots ,m$ we have
\begin{align*}
(s_k \hat{x}_\omega )(p_i )
&= \hat{x}_\omega (\alpha_{s_k}^{-1} (p_i )) 
= \hat{x}_\omega (u_k^* p_i u_k ) \\
&= (\omega\circ\Psi )(u_k^* p_i u_k ) 
= (\omega\circ \Ad w_k^* )(\Psi (p_i )) \\
&= \hat{x}_{s_k \omega} (p_i ) .
\end{align*}
so that $(s_k \hat{x}_\omega )(q_i ) = \hat{x}_{s_k \omega} (q_i )$ for $i=1,\dots ,n$ and hence
\[
d(s_k \zeta (\omega ), \zeta (s_k \omega )) = d(s_k \hat{x}_\omega , \hat{x}_{s_k \omega} ) 
= \sum_{i=1}^\infty 2^{-i} |(s_k \hat{x}_\omega )(q_i ) - \hat{x}_{s_k \omega} (q_i ) | 
\leq \frac{1}{2^{n-1}} < \varepsilon .
\]
Since $\varepsilon$ was an arbitrary positive number we conclude that the action is residually finite.
\end{proof}

\begin{theorem}\label{T-mf}
Let $r\in \{ 2,3,\dots,\infty \}$. Let $X$ be a zero-dimensional compact metrizable space and $F_r \curvearrowright X$ a continuous action.
Then the action is residually finite if and only if $C(X)\rtimes_\red F_r$ is an MF algebra.
\end{theorem}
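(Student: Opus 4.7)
The plan is to observe that Theorem~\ref{T-mf} is essentially a packaging of results already established in the excerpt, together with one external input. I would treat the two implications separately.

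For the forward direction, suppose the action $F_r \curvearrowright X$ is residually finite. Proposition~\ref{P-rf qd} immediately upgrades this to the statement that the induced action of $F_r$ on $C(X)$ is quasidiagonal in the sense of Definition~\ref{D-qd}. To apply Theorem~\ref{T-qd mf}, I need $C^*_\red(F_r)$ to be an MF algebra; this is precisely the Haagerup--Thorbj\o{}rnsen theorem \cite{HaaTho05} cited in the introduction (and it holds for all $r\in\{2,3,\dots,\infty\}$, with the case $r=\infty$ following by the permanence of MF under inductive limits and the embedding of $C^*_\red(F_\infty)$ into $C^*_\red(F_2)$, or simply by quoting the result for each finitely generated free subgroup and using that MF passes to $C^*$-subalgebras). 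Theorem~\ref{T-qd mf} then gives that $C(X)\rtimes_\red F_r$ is MF.

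For the reverse direction, if $C(X)\rtimes_\red F_r$ is an MF algebra and $X$ is zero-dimensional, then Lemma~\ref{L-Cantor mf rf} directly yields that the action is residually finite. No additional argument is required.

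Thus the proof is a two-line citation of Proposition~\ref{P-rf qd}, the Haagerup--Thorbj\o{}rnsen theorem, Theorem~\ref{T-qd mf}, and Lemma~\ref{L-Cantor mf rf}. There is no genuine obstacle here; all the real work has been done in the preceding lemmas and theorems. The only subtlety to flag is the case $r=\infty$, but since residual finiteness is tested on finite subsets of $F_r$ (so it suffices to work inside some $F_n$) and the MF property passes to $C^*$-subalgebras, this reduces cleanly to the finitely generated case already handled by Lemma~\ref{L-Cantor mf rf} and Theorem~\ref{T-qd mf}.
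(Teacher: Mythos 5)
Your proposal is correct and follows exactly the paper's own proof: the forward direction is Proposition~\ref{P-rf qd} plus the Haagerup--Thorbj{\o}rnsen theorem fed into Theorem~\ref{T-qd mf}, and the reverse direction is Lemma~\ref{L-Cantor mf rf}. Your remark on reducing $r=\infty$ to finite $r$ is the same observation the paper makes at the start of the proof of Lemma~\ref{L-Cantor mf rf}.
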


\begin{proof}
Suppose that the action is residually finite. Then by Proposition~\ref{P-rf qd} the action is quasidiagonal,
and since $C^*_\red (F_r )$ is an MF algebra \cite{HaaTho05} we infer by Theorem~\ref{T-qd mf}
that $C(X)\rtimes_\red F_r$ is an MF algebra.
The other direction is Lemma~\ref{L-Cantor mf rf}.
\end{proof}

Note that for $r=1$ the conclusion of Theorem~\ref{T-mf} is valid without the zero-dimensionality hypothesis
by Pimsner's result from \cite{Pim83} (see Section~\ref{S-integers}).

\section{Paradoxical decompositions}\label{S-pd}

For a compact Hausdorff space $X$ we write $\sC_X$ for the collection of clopen subsets of $X$
and $\sB_X$ for the collection of Borel subsets of $X$.

\begin{definition}
Suppose that $G$ acts on a set $X$. Let $\sS$ be a collection of subsets of $X$. Let $k$ and $l$ be 
integers with $k>l>0$. We say that a set $A\subseteq X$ is {\it $(G,\sS ,k,l)$-paradoxical} (or simply
{\it $(G,\sS )$-paradoxical} when $k=2$ and $l=1$) if there exist 
$A_1 , \dots , A_n \in\sS$ and $s_1 ,\dots , s_n \in G$ such that
$\sum_{i=1}^n 1_{A_i} \geq k\cdot 1_A$ and $\sum_{i=1}^n 1_{s_i A_i} \leq l\cdot 1_A$.
The set $A$ is said to be {\it completely $(G,\sS )$-nonparadoxical} if it fails to be
$(G,\sS ,k,l)$-paradoxical for all integers $k>l>0$.
\end{definition}

\begin{remark}
Suppose that $\sS$ is actually a subalgebra of the power set $\sP_X$, which will always be the case in our applications. 
Then we can express the $(G,\sS ,k,l)$-paradoxicality of a set $A$ in $\sS$ by partitioning copies of $A$ instead of
merely counting multiplicities. More precisely, $A$ is $(G,\sS ,k,l)$-paradoxical if and only if
for each $i=1,\dots ,k$ there exist an $n_i \in\Nb$ and 
$A_{i,1} , \dots , A_{i,n_i} \in\sS$, $s_{i,1} ,\dots , s_{i,n_i} \in G$, and 
$m_{i,1} , \dots , m_{i,n_i} \in \{ 1,\dots ,l \}$
so that $\bigcup_{j=1}^{n_i} A_{i,j} = A$ for each $i=1,\dots ,k$ and
the sets $s_{i,j} A_{i,j} \times \{ m_{i,j} \} \subseteq A\times \{ 1,\dots , l \}$ for $j=1,\dots ,n_i$ and $i=1,\dots ,k$ 
are pairwise disjoint. For the nontrivial direction, observe that if $A_1 , \dots , A_n$ and $s_1 ,\dots , s_n$ 
are as in the definition of $(G,\sS ,k,l)$-paradoxicality then the sets of the form
\[
A \cap \bigg(\bigg( \bigcap_{i\in P} A_i \bigg) \setminus \bigcup_{i\in \{ 1,\dots ,n\} \setminus P} A_i \bigg)  
\cap s_j^{-1} \bigg(\bigg( \bigcap_{i\in Q} s_i A_i \bigg) \setminus \bigcup_{i\in \{ 1,\dots ,n\} \setminus Q} s_i A_i \bigg) ,
\]
where $P$ and $Q$ are nonempty subsets of $\{ 1,\dots ,n\}$ with $|P|\leq k$ and $|Q|\leq l$ and 
$j\in P$, can be relabeled so as to produce the desired $A_{i,j}$.
\end{remark}

Suppose that $G$ acts on a set $X$. Let $\sS$ be a $G$-invariant subalgebra of the power set $\sP_X$ of $X$.
The {\it type semigroup $S(X,G,\sS )$} of the action with respect to $\sS$ is the preordered semigroup  
\[
\bigg\{ \bigcup_{i\in I} A_i \times \{ i \} : I\text { is a finite subset of } \Nb \text{ and } A_i \in\sS 
\text{ for each } i\in I \bigg\} \Big/ \sim
\]
where $\sim$ is the equivalence relation under which $P = \bigcup_{i\in I} A_i \times \{ i \}$ is equivalent to
$Q = \bigcup_{i\in J} B_i \times \{ i \}$ if there exist $n_i , m_i \in\Nb$,
$C_i \in\sS$, and $s_i \in G$ for $i=1,\dots ,k$ such that
$P = \bigsqcup_{i=1}^k C_i \times \{ n_i \}$ and $Q = \bigsqcup_{i=1}^k s_i C_i \times \{ m_i \}$.
Addition is defined by
\[
\bigg[ \bigcup_{i\in I} A_i \times \{ i \} \bigg] + \bigg[ \bigcup_{i\in J} B_i \times \{ i \} \bigg]
= \bigg[ \bigg( \bigcup_{i\in I} A_i \times \{ i \} \bigg) \cup \bigg( \bigcup_{i\in J + \max I} B_i \times \{ i \} \bigg) \bigg] ,
\]
and for the preorder we declare that $a\leq b$ if $b=a+c$ for some $c$.

The following is a standard observation. 

\begin{lemma}\label{L-measure comp nonp}
Let $X$ be a compact metrizable space and $G\curvearrowright X$ a continuous action.
Let $B$ be a nonempty Borel subset of $X$. Suppose that there is a $G$-invariant Borel probability measure $\mu$ on 
$X$ with $\mu (B) > 0$. Then $B$ is completely $(G,\sB_X )$-nonparadoxical.
\end{lemma}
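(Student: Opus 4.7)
The plan is to argue by contradiction using the $G$-invariance of $\mu$. Suppose, toward a contradiction, that $B$ is $(G,\sB_X,k,l)$-paradoxical for some integers $k > l > 0$. By definition, there are Borel sets $A_1,\dots,A_n$ and group elements $s_1,\dots,s_n \in G$ with
\[
\sum_{i=1}^n 1_{A_i} \geq k\cdot 1_B \qquad\text{and}\qquad \sum_{i=1}^n 1_{s_i A_i} \leq l\cdot 1_B.
\]
Since $G$ acts by homeomorphisms, each $s_i A_i$ is again a Borel set, so these are legitimate integrable indicator functions.

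Now I would integrate both inequalities against $\mu$. The first gives $\sum_{i=1}^n \mu(A_i) \geq k\mu(B)$, and the second gives $\sum_{i=1}^n \mu(s_i A_i) \leq l\mu(B)$. By $G$-invariance of $\mu$ we have $\mu(s_i A_i) = \mu(A_i)$ for every $i$, so chaining the two inequalities yields
\[
k\mu(B) \leq \sum_{i=1}^n \mu(A_i) = \sum_{i=1}^n \mu(s_i A_i) \leq l\mu(B).
\]

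Finally, dividing by $\mu(B)$, which is strictly positive by hypothesis, produces $k \leq l$, contradicting the assumption $k > l$. Hence no such paradoxical decomposition exists, and $B$ is completely $(G,\sB_X)$-nonparadoxical. There is no serious obstacle here; the only point worth noting is that $G$-invariance of $\mu$ must be used in its strong pointwise form $\mu(sA) = \mu(A)$ for every Borel $A$ and every $s \in G$, which is exactly the content of being a $G$-invariant measure.
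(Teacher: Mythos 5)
Your proof is correct and follows essentially the same route as the paper's: integrate the multiplicity inequalities against $\mu$, use $G$-invariance to identify $\sum_i \mu(A_i)$ with $\sum_i \mu(s_i A_i)$, and divide by $\mu(B)>0$ to get the contradiction $k\leq l$. The only (cosmetic) difference is that the paper first passes to the partition reformulation of paradoxicality from its preceding remark, whereas you work directly from the indicator-function definition, which is if anything slightly cleaner.
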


\begin{proof}
Let $\mu$ be a $G$-invariant Borel probability measure on $X$ with $\mu (B) > 0$.
Suppose that $B$ fails to be completely $(G,\sB_X )$-nonparadoxical. Then there are $k,l\in\Nb$ with $k > l$
for each $i=1,\dots ,k$ there are an $n_i \in\Nb$ and 
$B_{i,1} , \dots , B_{i,n_i} \in\sB_X$, $s_{i,1} ,\dots , s_{i,n_i} \in G$, and 
$m_{i,1} , \dots , m_{i,n_i} \in \{ 1,\dots ,l \}$
so that $\bigcup_{j=}^{n_i} B_{i,j} = B$ for each $i=1,\dots ,k$ and the sets 
$s_{i,j} B_{i,j} \times \{ m_{i,j} \} \subseteq B\times \{ 1,\dots , l \}$ for $j=1,\dots ,n_i$ and $i=1,\dots ,k$ 
are pairwise disjoint. Since $\mu$ is $G$-invariant we have
\begin{align*}
k\mu (B) \leq \sum_{j=1}^k \sum_{i=1}^n \mu (B_i ) = \sum_{j=1}^k \sum_{i=1}^n \mu (s_{i,j} B_i )
= \mu \bigg( \bigcup_{j=1}^k \bigcup_{i=1}^n \mu (s_{i,j} B_i )\bigg) \leq l\mu (B) ,
\end{align*}
and dividing by $\mu (B)$ yields $k\leq l$, a contradiction.
We conclude that $B$ is completely $(G,\sB_X )$-nonparadoxical.
\end{proof}

\begin{lemma}\label{L-comp nonp measure}
Let $G\curvearrowright X$ be a continuous action on a zero-dimensional compact metrizable space.
Let $A$ be a completely $(G,\sC_X )$-nonparadoxical clopen subset of $X$ such that $G\cdot A = X$. 
Then there exists a $G$-invariant Borel probability measure $\mu$ on $X$ such that $\mu (A) > 0$.
\end{lemma}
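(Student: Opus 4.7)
The plan is to use the hypothesis on $A$ to show first that $X$ itself is nonparadoxical in the type semigroup $S(X,G,\sC_X)$, then invoke a Tarski-type theorem for Boolean algebras to produce a $G$-invariant finitely additive probability measure on $\sC_X$, and finally extend to a Borel probability measure using compactness and zero-dimensionality.

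Since $A$ is open and $G\cdot A=X$, compactness of $X$ yields a finite set $F\subseteq G$ with $X=\bigcup_{s\in F}sA$. Setting $n=|F|$ we obtain $[X]\leq n[A]$ in $S(X,G,\sC_X)$. Suppose for contradiction that $X$ were $(G,\sC_X,k,l)$-paradoxical for some $k>l\geq 1$, i.e.\ $k[X]\leq l[X]$. A straightforward induction (multiply both sides of $k[X]\leq l[X]$ alternately by $k$ and $l$ and use transitivity) gives $k^m[X]\leq l^m[X]$ for every $m\geq 1$, and choosing $m$ with $(k/l)^m>n$ we would obtain
\[
k^m[A]\leq k^m[X]\leq l^m[X]\leq nl^m[A],
\]
with $k^m>nl^m$, contradicting the complete $(G,\sC_X)$-nonparadoxicality of $A$. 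Hence $X$ itself is not $(G,\sC_X,2,1)$-paradoxical.

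By the classical theorem of Tarski applied to the $G$-invariant Boolean algebra $\sC_X$ (in the form used by R{\o}rdam and Sierakowski in \cite{RorSie10}), nonparadoxicality of the top element $X$ is equivalent to the existence of a $G$-invariant finitely additive probability measure $\nu:\sC_X\to[0,1]$ with $\nu(X)=1$; subadditivity applied to $X=\bigcup_{s\in F}sA$ then forces $\nu(A)\geq 1/n>0$. Since $X$ is compact and zero-dimensional, every pairwise disjoint family of clopen sets whose union is clopen must be finite, so $\nu$ is automatically $\sigma$-additive on the Boolean algebra $\sC_X$. As $\sC_X$ generates the Borel $\sigma$-algebra of $X$, the Carath\'eodory extension theorem produces a unique Borel probability measure $\mu$ on $X$ extending $\nu$; $G$-invariance passes from the generating algebra to $\mu$, and $\mu(A)=\nu(A)>0$.

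The principal obstacle is the Tarski-type step. If one wished to avoid citation, it can be carried out by a Hahn--Banach separation argument on the real linear span of indicator functions of clopen sets inside $C_\Rb(X)$: nonparadoxicality of $X$ precisely prevents the constant function $\unit$ from lying in the norm closure of the cone consisting of coboundaries $\sum_i(\alpha_{s_i}1_{B_i}-1_{B_i})$ minus nonnegative clopen step functions, so one can separate $\unit$ from this cone by a positive $G$-invariant linear functional, which after normalization provides the required finitely additive invariant measure.
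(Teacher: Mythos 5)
Your proof is correct and reaches the measure by the same basic mechanism as the paper --- a Tarski/Goodearl--Handelman state on the type semigroup $S(X,G,\sC_X)$ followed by an extension from clopen sets to Borel sets --- but the two steps are organized differently. The paper applies the Goodearl--Handelman theorem directly to the element $[A]$: if no state $\sigma$ with $\sigma([A])>0$ existed, one would obtain $m[A]\leq n[X]\leq n|F|[A]$ with $m>n|F|$, contradicting the complete nonparadoxicality of $A$; it then quotes Lemma~5.1 of \cite{RorSie10} for the passage from a state on clopen sets to a $G$-invariant Borel measure, and normalizes at the end. You instead first transfer complete nonparadoxicality from $A$ to $X$ via the iteration $k^m[X]\leq l^m[X]\leq nl^m[A]$, normalize the state at $[X]$, and carry out the extension by hand --- correctly observing that on a compact zero-dimensional space any countable disjoint clopen decomposition of a clopen set is essentially finite, so finite additivity on $\sC_X$ is automatically $\sigma$-additivity; this is exactly what underlies the cited lemma of R{\o}rdam--Sierakowski, so your version is self-contained where the paper cites. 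One point of precision: the sentence ``hence $X$ itself is not $(G,\sC_X,2,1)$-paradoxical'' understates both what you proved and what you need. For a Boolean $G$-algebra such as $\sC_X$ (as opposed to the full power set) the type semigroup need not satisfy cancellation, so $(2,1)$-nonparadoxicality of $X$ alone is not known to produce an invariant finitely additive measure --- this is essentially the almost-unperforation question the paper leaves open in Section~5. What Tarski's theorem in its semigroup form (equivalently, Goodearl--Handelman) does give is that a state normalized at $[X]$ exists if and only if $(n+1)[X]\not\leq n[X]$ for every $n\in\Nb$, i.e., if and only if $X$ is \emph{completely} nonparadoxical; since that is precisely the stronger statement your iteration establishes, the proof stands once the citation is read in that form.
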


\begin{proof}
We claim that there is a state $\sigma$ on $S(X,G,\sC_X )$ such that $\sigma ([A])>0$. Indeed suppose that this is not
the case. Since $G\cdot A = X$, by compactness there is a finite set $F\subset G$ 
such that $F\cdot A = X$ and hence $[X] \leq |F|[A]$. 
By the Goodearl-Handelman theorem \cite[Lemma 4.1]{GooHan76}
we can find $n,m\in\Nb$ such that $m > n|F|$ and $m[A]\leq n[X]$, in which case $m[A] \leq n|F|[A]$, 
contradicting the complete $(G,\sC_X )$-nonparadoxicality of $A$. 
Thus the desired $\sigma$ exists.

By Lemma~5.1 of \cite{RorSie10} there is a $G$-invariant Borel measure 
$\nu$ on $X$ such that $\nu (B) = \sigma ([B])$ for all clopen sets $B\subseteq X$. 
Since $\nu (X) \geq \nu (V) > 0$ and
\begin{align*}
\nu (X) = \nu (F\cdot A) \leq \bigcup_{s\in F} \nu (sA) = |F|\nu (A) < \infty ,
\end{align*}
we can set $\mu (\cdot ) = \nu (\cdot ) / \nu (X)$ to obtain a $G$-invariant Borel probability measure on $X$.
\end{proof}

\begin{proposition}\label{P-measure minimal Cantor}
Let $G\curvearrowright X$ be a minimal continuous action on a zero-dimensional compact metrizable space. 
Then the following are equivalent:
\begin{enumerate}
\item there is a $G$-invariant Borel probability measure on $X$,

\item $X$ is completely $(G,\sC_X )$-nonparadoxical,

\item there is a nonempty clopen subset of $X$ which is completely $(G,\sC_X )$-nonparadoxical.
\end{enumerate}
\end{proposition}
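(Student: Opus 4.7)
The proof should be a straightforward assembly of the two preceding lemmas plus an elementary minimality argument. The plan is to verify the cycle $(1)\Rightarrow(2)\Rightarrow(3)\Rightarrow(1)$.

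For $(1)\Rightarrow(2)$, I would invoke Lemma~\ref{L-measure comp nonp} with $B=X$: a $G$-invariant Borel probability measure $\mu$ satisfies $\mu(X)=1>0$, so $X$ is completely $(G,\sB_X)$-nonparadoxical. Since $\sC_X\subseteq\sB_X$, any witness to $(G,\sC_X,k,l)$-paradoxicality would be a witness to $(G,\sB_X,k,l)$-paradoxicality, and hence $X$ is completely $(G,\sC_X)$-nonparadoxical.

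For $(2)\Rightarrow(3)$, I simply take $A=X$, which is a nonempty clopen subset.

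For $(3)\Rightarrow(1)$, let $A$ be a nonempty completely $(G,\sC_X)$-nonparadoxical clopen set. The key observation is that minimality forces $G\cdot A=X$: for any $x\in X$ the orbit $Gx$ is dense, so it meets the open set $A$, giving some $s\in G$ with $sx\in A$, i.e., $x\in s^{-1}A$. Then Lemma~\ref{L-comp nonp measure} produces a $G$-invariant Borel probability measure $\mu$ with $\mu(A)>0$, yielding (1).

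There is no real obstacle here; the statement is essentially a dictionary between the previously established results about paradoxicality, type semigroups, and invariant measures, with minimality used only in the single line that guarantees the orbit of any nonempty open set exhausts $X$.
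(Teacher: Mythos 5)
Your proof is correct and follows the same route as the paper: Lemma~\ref{L-measure comp nonp} (applied with $B=X$, noting $\sC_X\subseteq\sB_X$) gives (1)$\Rightarrow$(2), (2)$\Rightarrow$(3) is trivial, and minimality forces $G\cdot A=X$ for any nonempty clopen $A$ so that Lemma~\ref{L-comp nonp measure} gives (3)$\Rightarrow$(1). Your explicit remark that a witness to $(G,\sC_X,k,l)$-paradoxicality is also a witness to $(G,\sB_X,k,l)$-paradoxicality is a small point the paper leaves implicit, but the argument is otherwise identical.
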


\begin{proof}
Lemma~\ref{L-measure comp nonp} yields (1)$\Rightarrow$(2), while (2)$\Rightarrow$(3) is trivial.
Since for any clopen set $A\subseteq X$ we have $G\cdot A = X$ by minimality, 
we obtain (3)$\Rightarrow$(1) from Lemma~\ref{L-comp nonp measure}.
\end{proof}

\begin{proposition}\label{P-compress}
Let $\alpha$ be a continuous action of $G$ on a zero-dimensional compact metrizable space $X$
such that $C(X)\rtimes_\red G$ is stably finite.
Then every nonempty clopen subset of $X$ is completely $(\alpha ,\sC_X )$-nonparadoxical.
\end{proposition}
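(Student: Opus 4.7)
The plan is to argue by contradiction: assume a nonempty clopen set $A \subseteq X$ is $(\alpha,\sC_X,k,l)$-paradoxical for some integers $k>l>0$, and exhibit a projection in $M_k(C(X)\rtimes_\red G)$ that is Murray--von Neumann equivalent to a proper subprojection, contradicting stable finiteness.

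Using the refined form of paradoxicality in the remark, together with the fact that $\sC_X$ is a Boolean algebra (so we can subdivide the coverings into partitions via the standard atom construction), we can arrange for each $i=1,\dots,k$ a clopen partition $A = \bigsqcup_{j=1}^{n_i} A_{i,j}$, group elements $s_{i,j}\in G$, and labels $m_{i,j}\in\{1,\dots,l\}$ so that the sets $s_{i,j}A_{i,j}\subseteq A$ are pairwise disjoint among any collection of $(i,j)$ sharing a common label. For each pair $(i,j)$ set $v_{i,j} := 1_{s_{i,j}A_{i,j}}u_{s_{i,j}} \in C(X)\rtimes_\red G$, a partial isometry with $v_{i,j}^*v_{i,j}=1_{A_{i,j}}$ and $v_{i,j}v_{i,j}^*=1_{s_{i,j}A_{i,j}}$. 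Assemble these into a rectangular matrix $W\in M_{l,k}(C(X)\rtimes_\red G)$ by $W_{m,i} := \sum_{j:\,m_{i,j}=m}v_{i,j}$, and view $W$ as an element of $M_k(C(X)\rtimes_\red G)$ by padding with zero rows.

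The key computation is that $W^*W = 1_A\otimes 1_k$ and $WW^*\leq 1_A\otimes 1_l$. For the first identity, the $(i,i')$-entry of $W^*W$ is $\sum_m\sum_{j,j'}v_{i,j}^*v_{i',j'}$ where $j$ ranges over indices with $m_{i,j}=m$ and $j'$ over indices with $m_{i',j'}=m$. Since $v_{i,j}^*v_{i',j'} = u_{s_{i,j}}^*\,1_{s_{i,j}A_{i,j}}1_{s_{i',j'}A_{i',j'}}\,u_{s_{i',j'}}$, the pairwise disjointness of $\{s_{i,j}A_{i,j}\}$ within a fixed label forces all cross terms (any $(i,j)\neq(i',j')$ with common label) to vanish; only diagonal terms survive, giving $\sum_j v_{i,i}^*v_{i,i} = \sum_j 1_{A_{i,j}} = 1_A$ when $i=i'$ and $0$ when $i\neq i'$. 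For the second claim, since $s_{i,j}A_{i,j}\subseteq A$ we have $(1_A\otimes 1_l)W=W$, hence $WW^*\leq 1_A\otimes 1_l$.

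Finally, $W^*W = 1_A\otimes 1_k$ is a projection in $M_k(C(X)\rtimes_\red G)$ which is Murray--von Neumann equivalent (via the partial isometry $W^*$) to $WW^*\leq 1_A\otimes 1_l$. As $A\neq\emptyset$ and $l<k$, the projection $1_A\otimes(e_{l+1,l+1}+\dots+e_{k,k})$ is nonzero, so $1_A\otimes 1_l$ is a proper subprojection of $1_A\otimes 1_k$, and hence so is $WW^*$. Thus $M_k(C(X)\rtimes_\red G)$ contains an infinite projection, contradicting the assumed stable finiteness of $C(X)\rtimes_\red G$. The only mildly delicate step is the bookkeeping in the matrix computation, where one must carefully use the disjointness condition $s_{i,j}A_{i,j}\cap s_{i',j'}A_{i',j'}=\emptyset$ (for distinct $(i,j)$ with $m_{i,j}=m_{i',j'}$) to eliminate all off-diagonal contributions to $W^*W$.
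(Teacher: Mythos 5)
Your proof is correct. The core construction is the same as the paper's: you convert the paradoxical decomposition into partial isometries of the form $1_{sB}u_s$ placed against matrix units, with source projections summing to $k$ copies of $1_A$ and range projections fitting orthogonally inside $l$ copies. Where you diverge is in how the contradiction is extracted. The paper keeps the $nk$ partial isometries $a_{i,j}=e_{m_{i,j},j}\otimes u_{s_{i,j}}1_{V_i}$ separate, passes to $K_0$ to get $k[1_V]\le l[1_V]$, deduces $-[1_V]\ge 0$, and then has to unwind the $K_0$ relation $[p]+[1_V]=0$ into an explicit infinite projection $p'+q+r$. You instead assemble everything into a single rectangular partial isometry $W$ with $W^*W=1_A\otimes 1_k$ and $WW^*\le 1_A\otimes 1_l$, which exhibits $1_A\otimes 1_k$ as Murray--von Neumann equivalent to a proper subprojection of itself directly in $M_k(C(X)\rtimes_\red G)$; this bypasses $K_0$ entirely and is both shorter and more elementary. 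The only points to state carefully are the ones you already flag: the passage from coverings to clopen partitions (harmless, since shrinking the $A_{i,j}$ only shrinks the images $s_{i,j}A_{i,j}$ and preserves disjointness within a label), and the vanishing of the cross terms in $W^*W$, which is exactly the disjointness of $s_{i,j}A_{i,j}$ and $s_{i',j'}A_{i',j'}$ for distinct pairs carrying the same label. (There is a typo where you write $v_{i,i}^*v_{i,i}$ for $v_{i,j}^*v_{i,j}$, but the computation is right.)
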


\begin{proof}
Write $A=C(X)\rtimes_\red G$ for economy.
Suppose that there is a nonempty clopen subset $V$ of $X$ which fails to
be completely $(\alpha ,\sC_X )$-nonparadoxical. 
Then there are $k,l\in\Nb$ with $k > l$, a clopen partition $\{ V_1 , \dots , V_n \}$ of $V$,
and $s_{i,j} \in G$ and $m_{i,j} \in \{ 1,\dots ,l \}$ for $i=1,\dots ,n$ and $j=1,\dots ,k$ such that  
the sets $s_{i,j} V_i \times \{ m_{i,j} \} \subseteq V\times \{ 1,\dots , l \}$ for $i=1,\dots ,n$ and $j=1,\dots ,k$
are pairwise disjoint. For $i=1,\dots ,n$ and $j=1,\dots ,k$ set
\[
a_{i,j} = e_{m_{i,j} ,j} \otimes u_{s_{i,j}} 1_{V_i} \in M_k \otimes A 
\]
where the first component in the elementary tensor is a standard matrix unit in $M_k$. 
Then $a_{ij}$ is a partial isometry with
\begin{align*}
(a_{i,j} )^* a_{i,j} &= e_{jj} \otimes 1_{V_i} , \\
a_{i,j} (a_{i,j} )^* &= e_{m_{i,j} m_{i,j}} \otimes 1_{s_{i,j} V_i} .
\end{align*}
Now since
\[
\sum_{j=1}^k \sum_{i=1}^n (a_{i,j} )^* a_{i,j} = \sum_{j=1}^k e_{jj} \otimes 1_V = 1\otimes 1_V ,
\]
in $K_0 (A)$ we have
$\sum_{j=1}^k \sum_{i=1}^n [(a_{i,j} )^* a_{i,j} ] = k[1_V ]$.
On the other hand, 
\[
\sum_{j=1}^k \sum_{i=1}^n a_{i,j} (a_{i,j} )^* \leq \bigg( \sum_{j=1}^l e_{jj} \bigg) \otimes 1_V ,
\]
so that in $K_0 (A)$ we have $k[1_V ] \leq l[1_V ]$ and hence $-[1_V] \geq (l-k-1) [1_V] \geq 0$.
Thus there exists a projection $p$ in some matrix algebra over $A$ such that $[p] + [1_V ] = 0$.
We can thus find a $d\in\Nb$ and pairwise orthogonal projections 
$p',q,r\in M_d \otimes A$ such that 
\begin{enumerate}
\item $p'$ and $q$ are Murray-von Neumann equivalent to $p$ and $1_V$, respectively, 
in some matrix algebra over $A$ (viewing $1_V$ and $p$ as sitting in arbitrarily large
matrix algebras over $A$ as consistent with the definition of $K_0$), and

\item $p'+q+r$ is Murray-von Neumann equivalent to $r$ in $M_d \otimes A$. 
\end{enumerate}
Since $1_V \neq 0$ the projection
$q$ is not equal to zero, and so $p'+q+r$ is an infinite projection.
This means that $C(X)\rtimes_\red G$ fails to be stably finite, contradicting (1).
\end{proof}

Finally we discuss the relation between residual finiteness and complete nonparadoxicality
in the zero-dimensional setting.

\begin{proposition}\label{P-rf nonpd}
Let $G\curvearrowright X$ be a residually finite continuous action on a zero-dimensional 
compact metrizable space. Then every nonempty clopen subset of $X$ is completely $(G,\sC_X )$-nonparadoxical.
\end{proposition}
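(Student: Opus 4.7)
The plan is to argue by contradiction via a counting argument transported to a sufficiently fine finite model. Assume some nonempty clopen set $A\subseteq X$ is $(G,\sC_X,k,l)$-paradoxical for some $k>l>0$, witnessed by clopen sets $A_1,\dots,A_n\subseteq X$ and elements $s_1,\dots,s_n\in G$ satisfying $\sum_{i=1}^n 1_{A_i}\geq k\cdot 1_A$ and $\sum_{i=1}^n 1_{s_iA_i}\leq l\cdot 1_A$. Fix a compatible metric $d$ on $X$. Since each of the finitely many clopen sets $A$ and $s_iA_i$ is separated from its complement by a positive distance in $d$, we may fix an $\eta>0$ strictly less than all of these separations.

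Next apply residual finiteness with $F=\{s_1,\dots,s_n\}$ and $\varepsilon<\eta$ to obtain a finite $G$-set $E$ and a map $\zeta:E\to X$ with $\zeta(E)$ being $\varepsilon$-dense in $X$ and $d(\zeta(sz),s\zeta(z))<\varepsilon$ for all $z\in E$ and $s\in F$. Set $B_i=\zeta^{-1}(A_i)$ and $C=\zeta^{-1}(A)$. The choice of $\eta$ upgrades the approximate equivariance to the honest implication that $\zeta(z)\in A_i$ forces $\zeta(s_iz)\in s_iA_i$, which yields the pointwise inclusion $s_iB_i\subseteq\zeta^{-1}(s_iA_i)$ inside $E$. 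The hypothesis $\sum_i 1_{A_i}\geq k\cdot 1_A$ pulls back verbatim to $\sum_i 1_{B_i}\geq k\cdot 1_C$, while $\sum_i 1_{s_iA_i}\leq l\cdot 1_A$ combined with the above inclusion gives $\sum_i 1_{s_iB_i}\leq l\cdot 1_C$ pointwise on $E$.

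Summing these two inequalities over $E$ and using that each $s_i$ acts as a bijection on $E$, so that $|s_iB_i|=|B_i|$, we obtain
\[
k|C|\;\leq\;\sum_{i=1}^n|B_i|\;=\;\sum_{i=1}^n|s_iB_i|\;\leq\;l|C|.
\]
The $\varepsilon$-density of $\zeta(E)$ together with the nonemptiness of $A$ and our choice of $\eta$ guarantee that some $z\in E$ has $\zeta(z)\in A$, so $C\neq\emptyset$. Dividing forces $k\leq l$, contradicting $k>l$.

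The only genuine obstacle is the step that converts approximate equivariance into the honest inclusion $s_iB_i\subseteq\zeta^{-1}(s_iA_i)$, and this is precisely where zero-dimensionality is used: it supplies a uniform metric separation between each of the finitely many clopen sets $s_iA_i$ and its complement, so that an $\varepsilon$-perturbation with $\varepsilon<\eta$ cannot move a point across the boundary of any of them. Everything else is either direct pullback along $\zeta$ or the observation that finite $G$-sets admit no genuine paradoxical decompositions because bijections preserve cardinality.
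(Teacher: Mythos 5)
Your argument is correct and takes essentially the same route as the paper's own proof: both pass to a finite model via residual finiteness with $\varepsilon$ chosen below the metric separation of the relevant clopen sets from their complements, upgrade the approximate equivariance to the honest inclusion $s_i\zeta^{-1}(A_i)\subseteq\zeta^{-1}(s_iA_i)$, and derive $k\leq l$ by counting in the finite $G$-set using that each $s_i$ acts bijectively. There are no gaps.
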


\begin{proof}
Suppose to the contrary that there is a nonempty clopen set $A\subseteq X$ which is $(G,\sC_X ,k,l)$-nonparadoxical
for some integers $k>l>0$. Then there exist nonempty clopen sets 
$A_1 , \dots , A_n \subseteq A$ and $s_1 ,\dots , s_n \in G$ such that
$\sum_{i=1}^n 1_{A_i} \geq k\cdot 1_A$ and $\sum_{i=1}^n 1_{s_i A_i} \leq l\cdot 1_A$.
Let $d$ be a compatible metric on $X$. Take an $\varepsilon > 0$ which is smaller than the Hausdorff
distance between $A_i$ and $X\setminus A_i$ for each $i=1,\dots ,n$ and smaller that the Hausdorff distance
between $s_i A_i$ and $X\setminus s_i A_i$ for each $i=1,\dots ,n$.
By residual finiteness there is a finite set $E$, an action of $G$ on $E$, and a map $\zeta : E\to X$ such that
$\zeta (E)$ is $\varepsilon$-dense in $X$ and $d(\zeta (s_i z), s_i \zeta (z)) < \varepsilon$ 
for all $z\in E$ and $i=1,\dots ,n$. Since
$\zeta (E)$ is $\varepsilon$-dense in $X$, $\zeta^{-1} (A)$ is nonempty. 
For each $i=1,\dots ,n$ set $E_i = \zeta^{-1} (A_i )$. 

Each element of $\zeta^{-1} (A)$ is contained in $\zeta^{-1} (s_i A_i )$ for at most
$l$ values of $i$, and thus, since $\zeta (s_i E_i )\subseteq s_i A_i$ for $i=1,\dots ,n$ by our choice of $\varepsilon$, 
\[
l|\zeta^{-1} (A)| \geq \sum_{i=1}^n |\zeta^{-1} (s_i A_i)| \geq \sum_{i=1}^n |s_i E_i | = \sum_{i=1}^n |E_i | .
\]
On the other hand, each element of $\zeta^{-1} (A)$ is contained in $\zeta^{-1} (A_i )$ for at least
$k$ values of $i$, so that $\sum_{i=1}^n |E_i | \geq k|\zeta^{-1} (A)|$.
Therefore $l\geq k$, a contradiction.
\end{proof}

Residual finiteness is not a necessary condition for the conclusion in Proposition~\ref{P-rf nonpd}, 
as can be seen for $G=\Zb$ as follows.
Recall that the group $G$ is said to be {\it supramenable} if for every nonempty set $A\subseteq G$
there is a finitely additive left-invariant measure $\mu : \sP_X \to [0,+\infty]$ with $\mu (A) = 1$ \cite{Ros74}
(see Chapter~10 of \cite{Wag95}).
Every group of polynomial growth, in particular $\Zb$, is supramenable \cite{Ros74}. 
Supramenability is easily seen to be equivalent to the property that for every action of $G$ on a set $X$
and every nonempty set $A\subseteq X$ there is a finitely additive $G$-invariant measure $\mu : \sP_X \to [0,+\infty]$ 
with $\mu (A) = 1$. It follows by a standard observation as in the proof of Lemma~\ref{L-measure comp nonp}
that, for every action of a supramenable $G$ on a set $X$, every subset of $X$ is completely
$(G,\sP_X )$-nonparadoxical, where $\sP_X$ is the power set of $X$. 
On the other hand, for $\Zb$-actions residual finiteness is the same as chain recurrence 
(see the beginning of Section~\ref{S-integers}), and one can easily construct a
continuous $\Zb$-action on the Cantor set which fails to be chain recurrent using Lemma~2 of \cite{Pim83},
which shows that if $X$ a compact Haudorff space and $T:X\to X$ is a homeomorphism then $T$ is chain recurrent 
if and only if there does not exist an open set $U\subseteq X$ such
that $U\setminus T(\overline{U} )\neq \emptyset$ and $T(\overline U) \subseteq U$.

\section{Minimal actions of $F_r$}\label{S-minimal}

Here we give several characterizations of residual finiteness for minimal actions of the free group $F_r$ 
where $r\in\Nb\cup \{ \infty \}$. This is done by stringing together a variety of known results with a few of the
observations from the previous sections.

The following lemma is essentially extracted from the proof of Proposition~2.3 in \cite{LubSha04}. 
Note that in that proof it is claimed that, given a continuous action of $F_r$ on compact metric space $X$ 
preserving a Borel probability measure, for every $\varepsilon$ one can partition $X$ into
finitely many measurable subsets of equal (and hence rational) measure and diameter less than $\varepsilon$. 
This is false in general, as $X$ could contain a clopen subset $A$ of irrational measure and such an
$A$ would be equal to a union of sets in the partition whenever $\varepsilon$ is smaller than the distance
between $x$ and $y$ for every $x\in A$ and $y\in X\setminus A$. The argument can be modified to take
of this problem, but we will instead give a simpler proof that was supplied to us by Hanfeng Li. 

\begin{lemma}\label{L-invt measure res fin}
Let $X$ be a compact metrizable space and $F_r \curvearrowright X$ a continuous action.
Suppose there exists an $F_r$-invariant Borel probability measure $\mu$ on $X$ with full support. 
Then the action is residually finite.
\end{lemma}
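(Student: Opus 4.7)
The approach is to exploit the freeness of $F_r$: any assignment of permutations of a finite set $E$ to the generators $s_1,\dots,s_r$ extends uniquely to an $F_r$-action on $E$. So by the observation after Definition~\ref{D-res finite}, given a compatible metric $d$ on $X$ and $\varepsilon > 0$, it suffices to construct a finite set $E$, permutations $\pi_{s_1},\dots,\pi_{s_r}$ of $E$, and a map $\zeta:E\to X$ with $\zeta(E)$ $\varepsilon$-dense in $X$ and $d(s_i\zeta(e),\zeta(\pi_{s_i}(e))) < \varepsilon$ for every $e\in E$ and $i=1,\dots,r$. (One may assume $r$ is finite, since residual finiteness is witnessed on finite subsets of generators.) The plan is to weight a ``type-refined'' discretization of $X$ according to $\mu$ and use the transport identities coming from $F_r$-invariance to match up the weights.

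First, use the full support of $\mu$ to choose a finite Borel partition $\cP = \{A_1,\dots,A_k\}$ of $X$ with $\diam A_i < \varepsilon$ and $\mu(A_i) > 0$ for every $i$. Writing $F = \{s_1,\dots,s_r\}$, for each function $\Phi : F\cup\{e\}\to\{1,\dots,k\}$ set
\[
A^\Phi \,=\, A_{\Phi(e)} \cap \bigcap_{s\in F} s^{-1}A_{\Phi(s)},
\]
so the $A^\Phi$ refine $\cP$. Since $sA^\Phi\subseteq A_{\Phi(s)}$, the $F_r$-invariance of $\mu$ yields, for each $s\in F$, the transport identities
\[
\mu(A^\Phi) = \sum_{\Phi':\,\Phi'(e)=\Phi(s)}\mu(sA^\Phi\cap A^{\Phi'}),\qquad
\mu(A^{\Phi'}) = \sum_{\Phi:\,\Phi(s)=\Phi'(e)}\mu(sA^\Phi\cap A^{\Phi'}).
\]

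Next, for a large integer $N$, the plan is to round the weights $N\mu(A^\Phi)$ to nonnegative integers $n^\Phi$ with $\sum_\Phi n^\Phi = N$ and $n^\Phi \geq 1$ whenever $\mu(A^\Phi) > 0$, and, for each $s\in F$ separately, to round the values $N\mu(sA^\Phi\cap A^{\Phi'})$ to nonnegative integers $m^{\Phi,\Phi'}_s$ supported on pairs with $\Phi'(e)=\Phi(s)$ whose row and column sums recover the marginals $n^\Phi$ and $n^{\Phi'}$. For each $s$ this is a classical transportation problem with matching integer row and column sums, and integer solutions close to the target values exist by standard matrix-rounding arguments; producing all of them simultaneously is the main technical obstacle, although it decouples across the generators once the single family $\{n^\Phi\}$ has been fixed, which is exactly why the freeness of $F_r$ is essential.

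Finally, set $E = \bigsqcup_\Phi \{\Phi\}\times\{1,\dots,n^\Phi\}$, pick $x^\Phi\in A^\Phi$ for each $\Phi$ with $n^\Phi\geq 1$, define $\zeta(\Phi,j) = x^\Phi$, and for each generator $s\in F$ let $\pi_s$ be any bijection of $E$ sending exactly $m^{\Phi,\Phi'}_s$ elements of $\{\Phi\}\times\{1,\dots,n^\Phi\}$ into $\{\Phi'\}\times\{1,\dots,n^{\Phi'}\}$ whenever $\Phi'(e) = \Phi(s)$. Approximate equivariance is then automatic: if $\pi_s(\Phi,j) = (\Phi',j')$, then both $s\zeta(\Phi,j) = sx^\Phi\in sA^\Phi\subseteq A_{\Phi(s)}$ and $\zeta(\pi_s(\Phi,j)) = x^{\Phi'}\in A_{\Phi'(e)} = A_{\Phi(s)}$ lie in the same cell of $\cP$, which has diameter less than $\varepsilon$. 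Density of $\zeta(E)$ follows because $\mu(A_i) > 0$ forces $n^\Phi\geq 1$ for some $\Phi$ with $\Phi(e) = i$ once $N$ is large, so $\zeta(E)$ meets every $A_i$ and is therefore $\varepsilon$-dense.
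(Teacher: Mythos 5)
Your overall strategy is the same as the paper's (the paper credits the argument to Hanfeng Li): refine a small-diameter, positive-measure partition by the generator translates, weight the resulting cells by $\mu$, round the weights to integers, and let each generator permute the finite set block by block; your equivariance and density checks at the end are correct as stated. The gap is in the rounding step, which is the one nontrivial point of the proof. For the per-generator transportation problem with the support constraint $\Phi'(e)=\Phi(s)$ to be feasible, the integers $n^\Phi$ must satisfy, for every generator $s$ and every $i\in\{1,\dots,k\}$, the marginal-matching identities
\[
\sum_{\Phi:\,\Phi(s)=i} n^\Phi \;=\; \sum_{\Phi':\,\Phi'(e)=i} n^{\Phi'} ,
\]
because the rows with $\Phi(s)=i$ are only allowed to ship mass to the columns with $\Phi'(e)=i$. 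The true weights $\mu(A^\Phi)$ do satisfy these identities (both sides equal $\mu(s^{-1}A_i)=\mu(A_i)$ by invariance), but a rounding of the $N\mu(A^\Phi)$ that is only required to be nonnegative, to sum to $N$, and to be at least $1$ on positive-measure cells will in general violate them, and then for some $s$ there is no admissible matrix $m^{\Phi,\Phi'}_s$ at all. So the problem does not, as you claim, decouple across the generators once $\{n^\Phi\}$ is fixed: the feasibility conditions coming from all the generators simultaneously constrain which families $\{n^\Phi\}$ are usable, and your prescription does not produce one.

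The paper closes exactly this gap with a trick you should adopt: the identities above form a homogeneous linear system with rational (indeed $0$--$1$) coefficients in unknowns $x_\Phi$, and it admits the real solution $x_\Phi=\mu(A^\Phi)$; since rational points are dense in the real solution set of a linear system with rational coefficients, one can choose a rational solution with all entries so close to the $\mu(A^\Phi)$ that they are nonzero wherever $\mu(A^\Phi)>0$, and then clear denominators to obtain the $n^\Phi$. Once the $n^\Phi$ satisfy the identities exactly, you do not need the pairwise quantities $m^{\Phi,\Phi'}_s$ or their closeness to $N\mu(sA^\Phi\cap A^{\Phi'})$ at all: for each $s$ and $i$ simply take an arbitrary bijection between the two equinumerous blocks, which is all your equivariance estimate uses. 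With that modification your argument is complete and coincides with the paper's; your version of the equivariance check (both points land in the single cell $A_{\Phi(s)}$ of the original partition) is in fact slightly tidier than the paper's, which compares two points of a translated cell $sP$ and so implicitly invokes uniform continuity of the generators.
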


\begin{proof}
We may assume that $r$ is finite in view of the definition of residual finiteness. Write $S$ for the
standard generating set for $F_r$.
Let $\varepsilon > 0$. 
Take a finite measurable partition $\cP$ of $X$ whose elements have nonzero measure and diameter
less than $\varepsilon$. 
Write $\cQ$ for the collection of sets in the join $\bigvee_{s\in S} s\cP$ which have nonzero measure.
Consider, for each $P\in\cP$ and $s\in S$, the homogeneous linear equation 
$\sum_{Q\in\cQ , Q\subseteq P} x_Q = \sum_{Q\in\cQ , Q\subseteq sP} x_Q$ in the variables $x_Q$ for $Q\in \cQ$.
This system of equations has the solution $x_Q = \mu (Q)$ for $Q\in\cQ$, and, since the
rational solutions are dense in the set of real solutions by virtue of the rationality of the coefficients,
we can find a solution consisting of rational $x_Q$ which are close enough to the corresponding quantities
$\mu (Q)$ to be all nonzero. Pick a positive integer $M$ such that $Mx_Q$ is an integer for every $Q\in\cQ$.
For each $Q\in \cQ$ take a set $E_Q$ of cardinality $Mx_Q$ and define $E$ to be the disjoint union of these sets.
Take a map $\zeta : E\to X$ which sends $E_Q$ into $Q$ for each $Q\in\cQ$. Now for every $P\in\cP$
and $s\in S$ the sets $\bigcup_{Q\in\cQ , Q\subseteq P} E_Q$ and $\bigcup_{Q\in\cQ , Q\subseteq sP} E_Q$
have the same cardinality and so we can define an action of $F_r$ on $E$ by having a generator
$s$ send $\bigcup_{Q\in\cQ , Q\subseteq P} E_Q$ to $\bigcup_{Q\in\cQ , Q\subseteq sP} E_Q$ in some 
arbitrarily chosen way for each $P\in\cP$. Then $\zeta$ and this action on $E$ witness the definition of 
residual finiteness with respect to $\varepsilon$ and the generating set $S$.
\end{proof}

In \cite{CunPed79} Cuntz and Pedersen introduced, for an action $\alpha$ of $G$ on a $C^*$-algebra $A$, a notion of {\it G-finiteness},
which means that there do not exist distinct elements $a,b\in A$ such that (i) $a\leq b$ and (ii) $a$ and $b$ are $G$-equivalent 
in the sense that there exist a collection of elements $u_{i,s_i} \in A$, where $i$ ranges in an arbitrary index set $I$
and each $s_i$ is an element of $G$, such that 
\[
a = \sum_{i\in I} u_{i,s_i}^* u_{i,s_i} \hspace*{5mm} \text{and} \hspace*{5mm} b = \sum_{i\in I} \alpha_{s_i} (u_{i,s_i} u_{i,s_i}^* )
\]
where the sums are norm convergent. Theorem~8.1 of \cite{CunPed79} asserts that $A$ is $G$-finite if and only if 
it has a separating family of $G$-invariant tracial states, which is equivalent to the existence of a faithful
$G$-invariant tracial state if $A$ is separable.
We will say that a continuous action $G\curvearrowright$ on a compact Hausdorff space $X$
is {\it $G$-finite (in the Cuntz-Pedersen sense)} if the induced action on $C(X)$ is $G$-finite.

\begin{theorem}\label{T-minimal}
Let $X$ be a compact metrizable space and $F_r \curvearrowright X$ a minimal continuous action. 
Then the following are equivalent:
\begin{enumerate}
\item the action is residually finite,

\item there is an $F_r$-invariant Borel probability measure on $X$,

\item $C(X)\rtimes_\red F_r$ is an MF algebra,

\item $C(X)\rtimes_\red F_r$ is stably finite,

\item the action is $F_r$-finite in the Cuntz-Pedersen sense,
\end{enumerate}
If moreover $X$ is zero-dimensional then we can add the following conditions to the list:
\begin{enumerate}
\item[(6)] every nonempty clopen subset of $X$ is completely $(F_r ,\sC_X )$-nonparadoxical. 

\item[(7)] there exists a nonempty clopen subset of $X$ which is completely $(F_r ,\sC_X )$-nonparadoxical. 
\end{enumerate}
\end{theorem}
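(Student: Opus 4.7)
The plan is to close a cycle among (1)--(5) and then attach (6) and (7) in the zero-dimensional case, drawing throughout on the propositions established earlier in the paper.

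For (1)$\Rightarrow$(3), I would combine Proposition~\ref{P-rf qd}, which upgrades residual finiteness to quasidiagonality of the action on $C(X)$ in the sense of Definition~\ref{D-qd}, with Theorem~\ref{T-qd mf} and the Haagerup--Thorbj{\o}rnsen result that $C^*_\red(F_r)$ is an MF algebra. For (3)$\Rightarrow$(4), Proposition~\ref{P-MF} embeds the MF algebra unitally into $\prod_n M_{k_n}/\bigoplus_n M_{k_n}$, which is stably finite, so $C(X)\rtimes_\red F_r$ inherits stable finiteness.

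The principal obstacle is (4)$\Rightarrow$(2). Since $F_r$ is exact, the reduced crossed product is exact. The Blackadar--Handelman theorem then produces a quasitracial state on the unital stably finite algebra $C(X)\rtimes_\red F_r$, and Haagerup's theorem that bounded quasitraces on exact $C^*$-algebras are traces upgrades it to a genuine tracial state $\tau$. Restricting $\tau$ to $C(X)$ yields a Borel probability measure $\mu$, and the trace identity
\[
\tau(\alpha_s(f))=\tau(u_sfu_s^*)=\tau(fu_s^*u_s)=\tau(f)
\]
for $f\in C(X)$ and $s\in F_r$ shows that $\mu$ is $F_r$-invariant. The cycle then closes via (2)$\Rightarrow$(1), which is Lemma~\ref{L-invt measure res fin} together with the observation that the support of an $F_r$-invariant Borel probability measure is nonempty, closed, and $F_r$-invariant, hence all of $X$ by minimality.

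For (2)$\Leftrightarrow$(5), $F_r$-invariant Borel probability measures on $X$ correspond exactly to $F_r$-invariant tracial states on $C(X)$, and minimality forces any such measure to have full support. The Cuntz--Pedersen characterization of $F_r$-finiteness via a faithful $F_r$-invariant tracial state on the separable algebra $C(X)$ then makes both conditions equivalent to the existence of such a measure. In the zero-dimensional case I would close the additional loop (2)$\Rightarrow$(6)$\Rightarrow$(7)$\Rightarrow$(2): for (2)$\Rightarrow$(6), full support makes every nonempty clopen set of positive measure, so Lemma~\ref{L-measure comp nonp} gives complete $(F_r,\sB_X)$-nonparadoxicality and a fortiori complete $(F_r,\sC_X)$-nonparadoxicality; (6)$\Rightarrow$(7) is trivial; and (7)$\Rightarrow$(2) is the (3)$\Rightarrow$(1) direction of Proposition~\ref{P-measure minimal Cantor}.
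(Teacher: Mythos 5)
Your proposal is correct and follows essentially the same route as the paper: the cycle $(1)\Rightarrow(3)\Rightarrow(4)\Rightarrow(2)\Rightarrow(1)$ via Proposition~\ref{P-rf qd}, Theorem~\ref{T-qd mf}, stable finiteness of MF algebras, quasitraces, and Lemma~\ref{L-invt measure res fin}, with (5) handled by the Cuntz--Pedersen theorem and (6), (7) by Proposition~\ref{P-measure minimal Cantor}. The only divergence is in $(4)\Rightarrow(2)$, where your detour through Haagerup's theorem that quasitraces on exact $C^*$-algebras are traces is sound but unnecessary: a quasitrace is by definition linear on commutative subalgebras and invariant under unitary conjugation, so its restriction to $C(X)$ is already an invariant state, which is how the paper argues.
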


\begin{proof}
(1)$\Rightarrow$(2). By Proposition~\ref{P-res fin measure}.
 
(2)$\Rightarrow$(1). By minimality every $G$-invariant Borel probability measure on $X$ has full
support, and so Lemma~\ref{L-invt measure res fin} applies.

(1)$\Rightarrow$(3). By Theorem~\ref{T-qd mf}.

(3)$\Rightarrow$(4). By Proposition~3.3.8 of \cite{BlaKir97}.

(4)$\Rightarrow$(2). Stable finiteness implies the existence of a quasitrace (see Section~V.2 of \cite{Bla06}) and restricting 
a quasitrace on $C(X)\rtimes_\red F_r$ to $C(X)$ yields a $G$-invariant Borel probability measure on $X$.

(2)$\Leftrightarrow$(5). Apply Theorem~8.1 of \cite{CunPed79} as quoted above, using the fact that
every Borel probability measure on $X$ has full support by the minimality of the action.

Finally, in the case that $X$ is zero-dimensional 
(2)$\Leftrightarrow$(6)$\Leftrightarrow$(7) is a special case of Proposition~\ref{P-measure minimal Cantor}. 
\end{proof}

The hypotheses in the above theorem actually imply conditions (1) to (5) in the case of $\Zb$, i.e., when $r=1$. 
When $r\geq 2$ these conditions may fail, as the action on the Gromov boundary (or any amenable minimal action) shows.

\begin{problem}
Suppose in the above theorem that the action is topologically free and $X$ is Cantor set. It then follows by
Theorem~5.4 of \cite{RorSie10} that if every nonempty clopen subset of $X$ is $(F_r ,\sC_X )$-paradoxical
then $C(X)\rtimes_\red F_r$ is purely infinite. We therefore ask whether there is a dichotomy between the MF property
and pure infiniteness within this class of actions. This would be the case if for any such action 
$(F_r ,\sC_X )$-nonparadoxicality implies complete $(F_r ,\sC_X )$-nonparadoxicality for every nonempty clopen subset of $X$,
i.e., if the type semigroup $S(X,G,\sC_X )$ is almost unperforated in the sense of Section~5 of \cite{RorSie10}.
\end{problem}

\begin{example}
Let $\alpha$ be a minimal continuous action of $F_r$ on $\Tb$. 
Then the following are equivalent.
\begin{enumerate}
\item $\alpha$ is residually finite,

\item there is a $G$-invariant Borel probability measure on $\Tb$,

\item $C(\Tb )\rtimes_\red F_n$ is an MF algebra,

\item $C(\Tb )\rtimes_\red F_n$ is stably finite,

\item $\alpha_s$ is residually finite for every $s\in G$.
\end{enumerate}
The equivalences (1)$\Leftrightarrow$(2)$\Leftrightarrow$(3)$\Leftrightarrow$(4) are by Theorem~\ref{T-minimal},
(1)$\Rightarrow$(5) is trivial, and (5)$\Rightarrow$(2) is a consequence of a result of Margulis \cite{Mar00}.  
Note that, by Theorem~2 of \cite{Mar00}, if a minimal action of a countable group on the circle has
an invariant Borel probability measure then it is conjugate to an isometric action with respect to the standard metric
and thus factors through a group containing a commutative subgroup of index at most two.
Therefore there are no faithful residually finite continuous actions of $F_2$ on the circle.
\end{example}

\section{Actions on spaces of probability measures}\label{S-affine}

In this section we will use spaces of probability measures
in order to construct an example, for every nonamenable $G$, of a continuous action of $G$ on a
compact metrizable space such that $\alpha$ is not residually finite but its restriction to every
cyclic subgroup of $G$ is residually finite. The crossed product of such an action fails to be stably finite,
but, unlike in the case of integer actions, one cannot witness this failure 
by using the compression of an open set by a single group element to construct a nonunitary isometry.
See the discussion after Proposition~\ref{P-nonamenable}.

For a compact Hausdorff space $X$ we will view the space $M_X$ of Borel probability measures
on $X$ with its weak$^*$ topology as a topological subspace of $M_X$ via the point mass identification.

\begin{proposition}\label{P-affine}
Let $X$ be a compact metrizable space and $G\curvearrowright X$ a residually finite continuous action.
Then the induced action $G\curvearrowright M_X$ is residually finite.
\end{proposition}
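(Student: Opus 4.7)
My plan is to approximate arbitrary probability measures on $X$ by uniform empirical measures built out of residual-finiteness data for the base action. Fix a compatible metric $d$ on $X$ bounded by $1$, and equip $M_X$ with the Kantorovich--Rubinstein metric $W$, which metrizes the weak$^*$ topology since $X$ is a compact metric space. The decisive feature is the diagonal-coupling bound
\[
W\bigg(\frac{1}{n}\sum_{i=1}^n \delta_{x_i},\ \frac{1}{n}\sum_{i=1}^n \delta_{y_i}\bigg)
\leq \frac{1}{n}\sum_{i=1}^n d(x_i, y_i),
\]
together with the standard density of uniform empirical measures: for every $\eta > 0$ there is an $n \in \Nb$ such that every $\mu \in M_X$ lies within $W$-distance $\eta$ of some $\frac{1}{n}\sum_{i=1}^n \delta_{x_i}$ with $x_i \in X$. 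One obtains the latter by partitioning $X$ into small Borel pieces $B_j$, replacing $\mu$ by $\sum_j \mu(B_j)\delta_{y_j}$ for a choice of $y_j \in B_j$, and then rounding the masses $\mu(B_j)$ to rationals $m_j/n$ with common denominator $n$.

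Given $F \subseteq G$ finite and $\varepsilon > 0$, I would choose $n \in \Nb$ so that the uniform empirical measures with $n$ atoms form an $\varepsilon/3$-net in $M_X$, and set $\delta := \varepsilon/3$. Apply the residual finiteness of $G \curvearrowright X$ to produce a finite set $E_0$, a $G$-action on $E_0$, and a map $\zeta_0 : E_0 \to X$ such that $\zeta_0(E_0)$ is $\delta$-dense in $X$ and $d(\zeta_0(sz), s\zeta_0(z)) < \delta$ for all $z \in E_0$ and $s \in F$. Take $E := E_0^n$ with the diagonal $G$-action $s(z_1, \ldots, z_n) := (sz_1, \ldots, sz_n)$ and define $\zeta : E \to M_X$ by $\zeta(z_1, \ldots, z_n) := \frac{1}{n}\sum_{i=1}^n \delta_{\zeta_0(z_i)}$.

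Density of $\zeta(E)$ in $M_X$ follows from a three-step comparison: given $\mu \in M_X$, pick $x_1, \ldots, x_n \in X$ with $W(\mu, \frac{1}{n}\sum_i \delta_{x_i}) < \varepsilon/3$; then pick $z_i \in E_0$ with $d(x_i, \zeta_0(z_i)) < \delta$; and finally invoke the coupling bound to conclude that $\frac{1}{n}\sum_i \delta_{x_i}$ and $\zeta(z_1, \ldots, z_n)$ differ by at most $\delta$, for a total $W$-error under $\varepsilon$. Approximate equivariance is essentially the same estimate applied coordinatewise: for $s \in F$ and $\mathbf{z} \in E$, the measures
\[
s\zeta(\mathbf{z}) = \frac{1}{n}\sum_{i=1}^n \delta_{s\zeta_0(z_i)} \quad \text{and} \quad \zeta(s\mathbf{z}) = \frac{1}{n}\sum_{i=1}^n \delta_{\zeta_0(sz_i)}
\]
are within $W$-distance at most $\delta < \varepsilon$ by the coupling bound and the choice of $\zeta_0$. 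No step presents a serious obstacle; the only piece needing care is the density of uniform empirical measures in $M_X$, which is standard but worth making explicit.
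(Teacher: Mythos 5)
Your proof is correct and follows essentially the same route as the paper: the finite model for $M_X$ consists of the discrete probability measures with fixed denominator supported on the finite model of the base action, equipped with the push-forward $G$-action, and mapped into $M_X$ via the base map $\zeta_0$. The only difference is cosmetic: you metrize the weak$^*$ topology with the Kantorovich--Rubinstein metric and use the diagonal coupling bound, whereas the paper runs the same estimates against the pseudometrics $d_\Omega (\sigma ,\omega ) = \max_{f\in\Omega} |\sigma (f) - \omega (f)|$ generated by finite sets of continuous functions.
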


\begin{proof}
Let $\Omega$ be a finite subset of the unit ball of $C(X)$,
and equip $M_X$ with the continuous pseudometric 
$d_\Omega (\sigma , \omega ) = \max_{f\in\Omega} | \sigma (f) - \omega (f) |$. 
Let $\varepsilon > 0$.
By residual finiteness there are a finite set $Y$, an action of $G$ on $Y$, and a map $\zeta : Y\to X$
such that (i) $d_\Omega (s\zeta (y),\zeta (sy)) < \varepsilon$ for all $y\in Y$ and $s\in F$ and (ii)
$\zeta (Y)$ is $\varepsilon$-dense in $X$ with respect to $d_\Omega$.
Let $m\in\Nb$.
Let $E$ be the finite subset of $M_Y$ consisting of all convex combinations of the form 
$\sum_{y\in Y} c_y \delta_y$ where $c_y \in \{ 0,1/m,2/m,\dots ,(m-1)/m,1\}$
for each $y\in Y$. The the action of $G$ on $Y$ extends to an action of $G$ on $E$ by setting
\[
s \bigg(\sum_{y\in Y} c_y \delta_y \bigg) = \sum_{y\in Y} c_y \delta_{sy} 
\]
for $s\in G$. Define $\tilde{\zeta} : E\to M_X$ as the restriction of the push-forward map $M_Y \to M_X$ associated
to $\zeta$. Then for $z = \sum_{y\in Y} c_y \delta_y \in E$ and $s\in F$ we have
\begin{align*}
d_\Omega (s\tilde{\zeta} (z),\tilde{\zeta} (sz))
&= \sup_{f\in\Omega} \bigg| f\bigg( \sum_{y\in Y} c_y s\zeta (y)\bigg) - f\bigg( \sum_{y\in Y} c_y \zeta (sy)\bigg) \bigg| \\
&\leq \sup_{f\in\Omega} \sum_{y\in Y} c_y | f(s\zeta (y)) - f(\zeta (sy)) | < \varepsilon .
\end{align*}
Since the set of convex combinations of point masses is weak$^*$ dense in $M_X$, given any $\eta > 0$
we can take $\varepsilon$ sufficiently small and $m$ sufficiently large to guarantee that the image of 
$\tilde{\zeta}$ is $\eta$-dense in $M_X$ with respect to $d_\Omega$.
Since the pseudometrics of the 
form $d_\Omega$ generate the uniformity on $M_X$, we conclude that the action $G \curvearrowright M_X$ 
is residually finite.
\end{proof}

\begin{proposition}\label{P-single fixed}
Let $K$ be a compact convex subset of a locally
convex topological vector space and let $T:K\to K$ be a homeomorphism. Then the action of $\Zb$ generated by $T$
is residually finite. 
\end{proposition}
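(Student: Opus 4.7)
The plan is to produce a $T$-invariant Borel probability measure $\mu$ on $K$ with full support and then invoke Lemma~\ref{L-invt measure res fin}, whose proof goes through verbatim for the case $F_1 = \Zb$. To obtain some $T$-invariant measure I would apply the Schauder-Tychonoff fixed point theorem to the continuous affine self-map $T_*\colon M_K \to M_K$ pushing measures forward under $T$, yielding some $\nu \in M_K$ with $T_* \nu = \nu$. The core task is then to upgrade such an invariant measure to one of full support.

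The route I would take passes through an equicontinuity argument for $\{T^n\}_{n\in\Zb}$ on $K$. First apply a fixed point theorem to $T$ itself to locate a fixed point $p_0 \in K$ and translate the picture so that $p_0 = 0$. One then wants a $T$-invariant continuous seminorm on the ambient locally convex space under which $T$ acts isometrically; when $T$ is linear on $K$ after this translation, the Minkowski functional of the closed balanced convex hull of $K$ serves this purpose, and so $\{T^n\}_{n\in\Zb}$ becomes equicontinuous on $K$. Granted equicontinuity, the orbit closure of each $x \in K$ is a compact minimal subsystem carrying a $T$-invariant probability measure. Forming the weak$^*$ limit of the Cesaro averages $(1/n) \sum_{k=0}^{n-1} T_*^k \lambda$ for any full-support $\lambda \in M_K$ then yields a $T$-invariant $\mu$; equicontinuity forces the per-orbit averages contributing to this limit to be strictly positive on any open set met by the corresponding orbit closure, and the set of such basepoints in $K$ has positive $\lambda$-measure, so $\mu$ has full support.

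The main obstacle I anticipate is producing the $T$-invariant seminorm that delivers equicontinuity of $\{T^n\}$; this is the step that extracts the rigid, finite-dimensional-like behaviour from $T$ and in turn makes invariant measures abundant on $K$. Once full support of $\mu$ has been secured, Lemma~\ref{L-invt measure res fin} is applied with $r = 1$ to close the argument.
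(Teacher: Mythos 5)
The core of your plan --- manufacture a $T$-invariant Borel probability measure of full support on $K$ and quote Lemma~\ref{L-invt measure res fin} --- cannot succeed, because such a measure need not exist even though the proposition is true; Lemma~\ref{L-invt measure res fin} is a sufficient condition for residual finiteness, not a necessary one. Concretely, let $X=\Zb\cup\{-\infty,+\infty\}$ be the two-point compactification of $\Zb$ with the translation homeomorphism $\sigma$, and let $K=M_X$ with the induced affine homeomorphism $T=\sigma_*$; this is exactly the kind of system that Proposition~\ref{P-nonamenable} feeds into the present proposition. The set $U=\{\mu\in M_X:\mu(\{0\})>1/2\}$ is weak$^*$ open, contains $\delta_0$, and satisfies $T^kU\cap U=\emptyset$ for all $k\neq 0$ (a probability measure cannot give mass greater than $1/2$ to both $\{0\}$ and $\{-k\}$), so $\delta_0$ is a wandering point and, by Poincar\'{e} recurrence, lies outside the support of every $T$-invariant measure on $K$; hence no invariant measure has full support. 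The same example kills the equicontinuity step: $\delta_{-k}\to\delta_{-\infty}$ while $T^k\delta_{-k}=\delta_0$ stays at a fixed distance from $T^k\delta_{-\infty}=\delta_{-\infty}$. The reason your Minkowski-functional argument does not detect this is that in infinite dimensions the closed balanced convex hull of a compact set has empty interior, so its gauge induces a strictly finer topology on the span than the ambient one (here essentially the total variation norm versus the weak$^*$ topology); $T$ being an isometry for that gauge says nothing about equicontinuity for the topology of $K$.

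The paper's proof works directly with the convex structure and never produces an invariant measure. Take a fixed point $w$ of $T$, a finite set $V\subseteq K$ that is $\varepsilon$-dense for the pseudometric $d_\Omega$ determined by a finite set $\Omega$ of continuous affine functions, and for each $v\in V$ the finite cycle $\zeta(v,k)=(1-|k|/m)T^kv+(|k|/m)w$ for $k=-m,\dots,m$, cyclically shifted in $k$. An affine $f$ sees only the change of $1/m$ in the interpolation coefficient from one step to the next, so each step of the cycle is an approximate orbit step of size at most $2m^{-1}\max_{f\in\Omega}\|f\|$, and both ends of the segment collapse to $w$, which closes the cycle. That is the whole proof. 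One further caution: this computation, like your appeal to a fixed point and to linearity after translation, genuinely uses that $T$ is affine (the proposition is only ever applied to induced actions on spaces of probability measures). For a nonaffine homeomorphism the statement fails --- $x\mapsto x^2$ on $[0,1]$ is not chain recurrent, hence not residually finite by Proposition~\ref{P-rf cr} --- so any correct argument must exploit affineness, and no argument can route through a fully supported invariant measure.
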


\begin{proof}
Let $\varepsilon > 0$.
Let $\Omega$ be a finite set of continuous affine real-valued functions on $K$,
and equip $K$ with the continuous pseudometric $d_\Omega (x,y) = \max_{f\in\Omega} | f(x) - f(y) |$. 
Let $\varepsilon > 0$. Take a finite set $V\subseteq K$ which is $\varepsilon$-dense for $d_\Omega$. 
Choose an $m\in\Nb$ such that $2m^{-1} \max_{f\in\Omega} \| f \| < \varepsilon$. 
Set $E = \{ (v,k) : v\in V \text{ and } k=-m,\dots ,m\}$ and define the bijection $S : E\to E$ by setting
$S(v,k) = (v,k+1)$ for $v\in V$ and $k=-m,\dots ,m-1$ and $S(v,m) = -m$ for $v\in V$.
 
Since $\Zb$ is amenable there is a $T$-fixed point $w\in K$. Define a map $\zeta : E\to K$
by setting, for each $v\in V$ and $k=-m,-m+1,\dots ,m$,
\[ 
\zeta (v,k) = \bigg( 1 - \frac{|k|}{m} \bigg) T^k v + \frac{|k|}{m} w . 
\]
Then for $v\in V$ and $k=-m,\dots ,m-1$ we have
\begin{align*}
d_\Omega (\zeta (S(v,k)),T\zeta (v,k) ) &= d_\Omega (\zeta (v,k+1) , T\zeta (v,k)) \\
&= \max_{f\in\Omega} \frac1m \big| f(T^{k+1} v) - f(w) \big| \\
&\leq \frac2m \max_{f\in\Omega} \| f \| < \varepsilon , 
\end{align*}
while $\zeta (S(v,m)) = w = T\zeta (v,m)$ so that $d_\Omega (\zeta (S(v,m)),T\zeta (v,m)) = 0$.
Observe that by the uniform continuity of $T$ we can enlarge $m$ to obtain the same kind of estimates for finitely many powers of $T$
at a time. Since the continuous pseudometrics of the form $d_\Omega$ generate the topology on $K$,
we conclude that $T$ is residually finite.
\end{proof}

\begin{proposition}\label{P-measure}
Let $G \curvearrowright X$ be a continuous action on a compact metrizable space. 
Suppose that the induced action $G \curvearrowright M_X$ is residually finite.
Then there exists a $G$-invariant Borel probability measure on $X$.
\end{proposition}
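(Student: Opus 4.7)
The plan is to bootstrap from an invariant measure on $M_X$ down to an invariant measure on $X$ via the barycenter construction. Since the induced action $G \curvearrowright M_X$ is residually finite by hypothesis, Proposition~\ref{P-res fin measure} immediately furnishes a $G$-invariant Borel probability measure $\nu$ on $M_X$. The task then reduces to extracting from $\nu$ a $G$-invariant element of $M_X$ itself.

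The natural candidate is the barycenter $\mu \in M_X$ of $\nu$, defined as a weak$^*$ integral by
\[
\mu(f) = \int_{M_X} \sigma(f) \, d\nu(\sigma) \qquad (f \in C(X)).
\]
Since $M_X$ is a weak$^*$-compact convex subset of the dual $C(X)^*$ and each evaluation map $\sigma \mapsto \sigma(f)$ is weak$^*$-continuous, this expression defines a positive linear functional of norm one on $C(X)$, hence a Borel probability measure on $X$ by Riesz representation.

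It remains to check that $\mu$ is $G$-invariant, which is the only step with any content. By the definition of the induced action on $M_X$ (recalling our convention $\alpha_s(f)(x) = f(s^{-1}x)$), the action is given on states by $(s\sigma)(f) = \sigma(\alpha_{s^{-1}}(f))$. Using the $G$-invariance of $\nu$ in the form $\int F(\sigma)\, d\nu = \int F(s\sigma)\, d\nu$ with $F(\sigma) = \sigma(f)$, I compute
\[
\mu(f) = \int_{M_X} \sigma(f) \, d\nu(\sigma) = \int_{M_X} (s\sigma)(f) \, d\nu(\sigma) = \int_{M_X} \sigma(\alpha_{s^{-1}}(f)) \, d\nu(\sigma) = \mu(\alpha_{s^{-1}}(f)).
\]
Replacing $s$ by $s^{-1}$, this gives $\mu(\alpha_s(f)) = \mu(f)$ for every $s \in G$ and $f \in C(X)$, so $\mu$ is $G$-invariant, completing the proof. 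No genuine obstacle arises: the only conceptual input is the barycenter construction, and the verification is a one-line change of variables.
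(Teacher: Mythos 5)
Your proof is correct and follows exactly the paper's argument: obtain a $G$-invariant measure on $M_X$ from Proposition~\ref{P-res fin measure} and pass to its barycentre, which is a $G$-fixed point of $M_X$ and hence a $G$-invariant Borel probability measure on $X$. The only difference is that you spell out the one-line invariance verification that the paper leaves implicit.
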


\begin{proof}
By Proposition~\ref{P-res fin measure} there is a $G$-invariant Borel probability measure $\mu$ on $M_X$.
Consider the barycentre $b_\mu$ of $\mu$, i.e., the unique element of $K$ satisfying 
$b_\mu (f) = \mu (f)$ for all $f\in C_\Rb (M_X )$, with $f$ viewed 
on the right side of the equality as an affine function on $M_K$ under 
the canonical identification. Then $b_\mu$ is a fixed point for the action $G \curvearrowright M_X$,
that is, $b_\mu$ is a $G$-invariant Borel probability measure on $X$.
\end{proof}

\begin{proposition}\label{P-nonamenable}
Suppose that $G$ is nonamenable. Then there exists a continuous action $\alpha$ of $G$ on a
compact metrizable space such that $\alpha$ is not residually finite but its restriction to every
cyclic subgroup of $G$ is residually finite.
\end{proposition}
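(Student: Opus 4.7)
The plan is to exploit the dichotomy built up in this section: by Proposition~\ref{P-measure}, residual finiteness of an affine action on $M_Y$ forces the existence of an invariant probability measure for the downstairs action on $Y$, while by Proposition~\ref{P-single fixed} any $\Zb$-action on a compact convex set is automatically residually finite. Nonamenability of $G$ should let us choose $Y$ so that the downstairs action has no invariant measure, and the induced action on $M_Y$ will be the desired example.

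First I would invoke the standard fact that a countable discrete group is amenable if and only if every continuous action on a compact metrizable space admits an invariant Borel probability measure; since $G$ is nonamenable, this furnishes a continuous action $G\curvearrowright Y$ on a compact metrizable space $Y$ with no $G$-invariant Borel probability measure. Concretely, such a $Y$ can be manufactured as the Gelfand spectrum of a suitable separable $G$-invariant unital $C^*$-subalgebra of $\ell^\infty (G)$ chosen to witness the failure of any left-invariant mean. I would then let $\alpha$ be the induced affine action of $G$ on the compact metrizable space $M_Y$ and propose this as the required example.

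That $\alpha$ is not residually finite is immediate from Proposition~\ref{P-measure}: any residually finite action on $M_Y$ would yield a $G$-invariant Borel probability measure on $Y$, contradicting the choice of $Y$. To verify that the restriction of $\alpha$ to every cyclic subgroup $\langle s\rangle$ of $G$ is residually finite, I distinguish two cases. Note that $M_Y$ is a compact convex subset of the locally convex space $C(Y)^*$. If $\langle s\rangle$ is infinite cyclic, Proposition~\ref{P-single fixed} applies directly to the homeomorphism $\alpha_s\colon M_Y\to M_Y$. If $\langle s\rangle$ is finite of order $n$, the argument is elementary: given a finite $F\subseteq\langle s\rangle$, a continuous pseudometric of the type appearing in Definition~\ref{D-res finite}, and $\varepsilon>0$, I would take a finite $\varepsilon$-net $N\subseteq M_Y$, pass to the finite $\langle s\rangle$-invariant set $E=\bigcup_{\mu\in N}\langle s\rangle\cdot\mu\subseteq M_Y$, and use the inclusion $E\hookrightarrow M_Y$ as an exactly equivariant, $\varepsilon$-dense witness.

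The main obstacle I anticipate is the justification of the existence claim for $Y$ in the first step; everything else is essentially mechanical given the results already established in this section. The construction of $Y$ amounts to a standard characterization of nonamenability in terms of the existence of continuous compact metrizable actions without invariant probability measures, and once this is granted, the whole proposition falls out of Propositions~\ref{P-single fixed} and~\ref{P-measure}.
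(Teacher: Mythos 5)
Your proposal is correct and follows essentially the same route as the paper: take a $G$-action on a compact metrizable space without an invariant Borel probability measure (which exists by nonamenability), pass to the induced action on the space of probability measures, and apply Proposition~\ref{P-measure} for the failure of residual finiteness together with Proposition~\ref{P-single fixed} (plus the trivial finite case) for the cyclic subgroups. The extra details you supply — the construction of $Y$ via a separable invariant subalgebra of $\ell^\infty(G)$ and the explicit treatment of finite cyclic subgroups — are correct but are left implicit in the paper.
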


\begin{proof}
Since $G$ is nonamenable there is a continuous action $\alpha$ of $G$ on a compact metrizable space $X$
which does not admit an invariant Borel probability measure. By Proposition~\ref{P-measure}, the induced
action $\tilde{\alpha}$ of $G$ on $M_X$ is not residually finite. However, for every cyclic subgroup $H$
of $G$ the restriction of $\tilde{\alpha}$ to $H$ is residually finite, which is obvious if $|H| < \infty$
and follows from Proposition~\ref{P-single fixed} otherwise.
\end{proof}

Note that if $G$ is nonamenable and $G\curvearrowright X$ 
is an action as in the statement of Proposition~\ref{P-nonamenable} then $C(X)\rtimes_\red G$ fails to be stably finite,
for otherwise it would admit a quasitrace (see Section~V.2 of \cite{Bla06}) and restricting a quasitrace to $C(X)$ would produce a 
$G$-invariant Borel probability measure on $X$. Therefore $M_k (C(X)\rtimes_\red G)$ contains a nonunitary isometry for some
$k\in\Nb$. However one cannot verify the existence of such an isometry by using the compression of an open
set by a single group element as in the case of crossed products by $\Zb$ \cite{Pim83} (see the discussion preceding 
Theorem~\ref{T-Pimsner} in the next section).

\begin{problem}
For an action of the kind just described, exhibit a nonunitary isometry in some matrix algebra over $C(X)\rtimes_\red G$.
\end{problem}

\begin{problem}
In the statement of Proposition~\ref{P-nonamenable}, can the action be taken to be minimal and/or the space 
taken to be the Cantor set?
\end{problem}

\section{$\Zb$-systems}\label{S-integers}

Here we specialize our discussion to $\Zb$-actions, for which the 
$C^*$-algebraic notions of quasidiagonality, strong quasidiagonality, and 
strong NFness all admit direct dynamical interpretations in terms of 
residual finiteness. Tacit use will be made of the fact that, 
due to the amenability of $\Zb$, the reduced crossed product associated to 
a continuous action of $\Zb$ on a compact Hausdorff space is nuclear and 
coincides canonically with the full crossed product.

For terminological economy we will conceive of a $\Zb$-action
as a pair $(X,T)$ where $X$ is a compact Hausdorff space and $T:X\to X$ is a homeomorphism associated 
to the generator $1$, and we will call such a pair a {\it $\Zb$-system}. 
We will thus speak of residually finite $\Zb$-systems.
A $\Zb$-system $(X,T)$ is said to be {\it metrizable} if $X$ is metrizable.
We write $\orb (x)$ for the orbit of the point $x$ under $T$, i.e., $\orb (x) = \{ T^n x : n\in\Zb \}$.

For $\Zb$-systems, residual finiteness coincides with Conley's concept
of chain recurrence \cite{Con78}, defined as follows.
Let $(X,T)$ be a $\Zb$-system. Let $x,y\in X$ and let $\varepsilon$ be a neighbourhood
of the diagonal in $X\times X$.
An {\it $\varepsilon$-chain} from $x$ to $y$ is a finite sequence
$\{ x_1 = x, x_2 , \dots , x_n = y \}$ in $X$ such that $n>1$ and $(Tx_i , x_{i+1} )
\in\varepsilon$ for every $i=1, \dots , n-1$. The point $x$ is said to be
{\it chain recurrent} if for every neighbourhood of the diagonal in $X\times X$
there is an $\varepsilon$-chain from $x$ to itself (if $X$ is a metric space then
we can quantify instead over all $\varepsilon > 0$, with 
$\varepsilon$-chain taking its meaning in relation to the set 
$\{ (x,y)\in X\times X : d(x,y) < \varepsilon \}$). This is 
equivalent to $x$ being pseudo-nonwandering in the sense of \cite{Pim83}. We
remark that the set of chain recurrent points is a closed 
$T$-invariant subset of $X$. Finally, the system 
$(X,T)$ is said to be {\it chain recurrent} if every point in $X$ is chain 
recurrent. Note that a $\Zb$-system for which there is a dense set of recurrent points is chain recurrent.

\begin{proposition}\label{P-rf cr}
A $\Zb$-system $(X,T)$ is residually finite if and only if it is chain recurrent.
\end{proposition}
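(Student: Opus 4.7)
The plan is to translate directly between cycles in a finite $\Zb$-set and $\varepsilon$-chains in $X$. For the chain-recurrent-implies-residually-finite direction, which amounts to an assembly procedure, given a neighbourhood $\varepsilon$ of the diagonal in $X \times X$ I would fix a finite $\varepsilon$-dense set $D \subseteq X$ and, invoking chain recurrence at each $x \in D$, select an $\varepsilon$-chain $x = y_0^{(x)}, y_1^{(x)}, \ldots, y_{n_x}^{(x)} = x$ from $x$ to itself. Setting $E = \bigsqcup_{x \in D} \Zb / n_x \Zb$ with the coordinate-wise translation action of $\Zb$ and $\zeta : E \to X$ defined on the $x$-component by $i \mapsto y_i^{(x)}$, the chain condition $(Ty_i^{(x)}, y_{i+1}^{(x)}) \in \varepsilon$ is precisely the required approximate equivariance with respect to the generator $1 \in \Zb$, and $\zeta(E) \supseteq D$ is $\varepsilon$-dense. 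The reduction from general finite $F \subseteq \Zb$ to $F = \{1\}$ follows by uniform continuity, as noted after Definition~\ref{D-res finite}.

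For the reverse direction I would fix $x \in X$ and a symmetric neighbourhood $\varepsilon$ of the diagonal, and choose a symmetric sub-entourage $\delta$ with $\delta \circ \delta \circ \delta \subseteq \varepsilon$ satisfying the implication $(u,v) \in \delta \Rightarrow (Tu, Tv) \in \delta$, which is possible by uniform continuity of $T$ on the compact Hausdorff space $X$. Applying residual finiteness to $F = \{1\}$ and $\delta$ would produce a finite $\Zb$-set $(E, S)$ together with a map $\zeta : E \to X$ with $\delta$-dense image satisfying $(T\zeta(z), \zeta(Sz)) \in \delta$ for every $z \in E$. I would then pick $z_0 \in E$ with $(\zeta(z_0), x) \in \delta$ and, using that $S$ is a bijection of a finite set, let $z_0, z_1 = Sz_0, \ldots, z_n = S^n z_0 = z_0$ be the $S$-cycle through $z_0$, of some length $n \geq 1$. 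My candidate $\varepsilon$-chain from $x$ to $x$ is
\[
x,\ \zeta(z_1),\ \zeta(z_2),\ \ldots,\ \zeta(z_{n-1}),\ x,
\]
which collapses to the two-term chain $x, x$ when $n = 1$.

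The main obstacle is to verify the chain condition at the three kinds of steps in this candidate. For interior pairs $(T\zeta(z_i), \zeta(z_{i+1}))$ with $1 \leq i \leq n-2$, approximate equivariance gives membership in $\delta \subseteq \varepsilon$ directly. The opening pair $(Tx, \zeta(z_1))$ is handled by concatenating the uniform-continuity consequence $(Tx, T\zeta(z_0)) \in \delta$ of $(x, \zeta(z_0)) \in \delta$ with $(T\zeta(z_0), \zeta(z_1)) \in \delta$, placing the pair in $\delta \circ \delta \subseteq \varepsilon$. The closing pair $(T\zeta(z_{n-1}), x)$ is handled symmetrically by concatenating $(T\zeta(z_{n-1}), \zeta(z_0)) \in \delta$ (valid since $\zeta(z_n) = \zeta(z_0)$) with $(\zeta(z_0), x) \in \delta$. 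In the degenerate case $n = 1$ all three ingredients must be chained through the $S$-fixed point $z_0$, yielding $(Tx, x) \in \delta \circ \delta \circ \delta \subseteq \varepsilon$, which is precisely why the threefold composition was insisted on in choosing $\delta$. Together these verifications produce an $\varepsilon$-chain from $x$ to itself and thus establish chain recurrence of $x$, completing the proof.
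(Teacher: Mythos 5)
Your argument is correct and follows essentially the same route as the paper's: assemble $\varepsilon$-chains based at the points of a finite $\varepsilon$-dense set into a disjoint union of cycles for one direction, and read off an $\varepsilon$-chain from the $S$-cycle through a point of $E$ mapping $\delta$-close to $x$ for the other. The only slip is the requirement $(u,v)\in\delta\Rightarrow (Tu,Tv)\in\delta$, which cannot in general be arranged for an entourage $\delta$ (no entourage need be $(T\times T)$-invariant); since you apply $T$ only once in each concatenation, it suffices instead to choose $\delta\subseteq\delta_0$ with $(T\times T)(\delta)\subseteq\delta_0$ and $\delta_0\circ\delta_0\circ\delta_0\subseteq\varepsilon$, which uniform continuity does provide.
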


\begin{proof}
Suppose first that $(X,T)$ is chain recurrent. Let $F$ be a finite subset of $\Zb$. 
Let $\varepsilon$ and $\varepsilon'$ be neighbourhoods of the diagonal
in $X\times X$. Take a finite $\varepsilon$-dense subset $D$ of $X$. For each $x\in D$ pick an
$\varepsilon'$-chain from $x$ to itself and write $C_x$ for the sequence obtained by omitting
$x$ at the end of the $\varepsilon'$-chain. Set $E = \bigsqcup_{x\in D} C_x$, and let $S:E\to E$ be the bijection
which cyclically permutes the points of each $C_x$ according to the sequential order. 
Then the map $\zeta : E\to X$ defined by taking the inclusion on each $C_x$ has $\varepsilon$-dense image, and
by uniform continuity it will satisfy $(\zeta (S^n z),T^n \zeta (z)) \in\varepsilon$ 
for every $n\in F$ if $\varepsilon'$ is taken fine enough. Thus $(X,T)$ is residually finite.

Conversely, suppose that $(X,T)$ is residually finite. Let $x\in X$ and let $\varepsilon$ be a neighbourhood 
of the diagonal in $X\times X$. By residual finiteness there is a finite set $E$, a bijection $S:E\to E$,
a map $\zeta : E\to X$ such that $(\zeta (Sz),T\zeta (z)) \in\varepsilon$ for all $z\in E$, and 
a $z_0 \in E$ such that $(\zeta (z_0 ),x)\in\varepsilon$. By a simple perturbation argument 
involving a finer choice of $\varepsilon$, we may assume that $\zeta (z_0 ) = x$. Then by applying $\zeta$
to the $S$-cycle in which $z_0$ lies we obtain an $\varepsilon$-chain for $x$. Hence $(X,T)$ is chain recurrent.
\end{proof}

In Lemma~2 of \cite{Pim83} (which is also valid in the nonmetrizable case) Pimsner
showed that a $\Zb$-system $(X,T)$ is chain recurrent if and only if there does not 
exist an open set $U\subseteq X$ which is compressed by $T$ in the sense
that $U\setminus T(\overline{U} )\neq \emptyset$ and $T(\overline U) \subseteq U$. 
From such an open set one can construct a nonunitary isometry in the crossed product 
$C(X)\rtimes_\red \Zb$ \cite[Prop.\ 8]{Pim83}, which establishes the implication
(3)$\Rightarrow$(1) in the following theorem of Pimsner from \cite{Pim83}.

\begin{theorem}\label{T-Pimsner}
For a $\Zb$-system $(X,T)$ the following are equivalent:
\begin{enumerate}
\item $(X,T)$ is pseudo-nonwandering (i.e., residually finite),

\item $C(X)\rtimes_\red \Zb$ is quasidiagonal,

\item $C(X)\rtimes_\red \Zb$ is stably finite.
\end{enumerate}
Moreover, if $X$ is metrizable then the following condition can be added to the list:
\begin{enumerate}
\item[(4)] $C(X)\rtimes_\red \Zb$ can be embedded into an AF algebra.
\end{enumerate}
\end{theorem}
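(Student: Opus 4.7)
The plan is to establish the cycle $(1)\Rightarrow(2)\Rightarrow(3)\Rightarrow(1)$ in the general case, and then, when $X$ is metrizable, add $(4)\Rightarrow(2)$ and $(1)\Rightarrow(4)$.

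For $(1)\Rightarrow(2)$ I would combine the machinery already developed. The group $\Zb$ is abelian, so $C^*_\red (\Zb )\cong C(\Tb )$ is commutative and thus quasidiagonal. Residual finiteness of $(X,T)$ gives, via Proposition~\ref{P-rf qd}, a quasidiagonal action on $C(X)$. When $X$ is metrizable, $C(X)$ is separable and nuclear, so Theorem~\ref{T-qd qd} applies and $C(X)\rtimes_\red \Zb$ is quasidiagonal. For general $X$ one reduces to the metrizable case locally: quasidiagonality is verified on finite subsets, and any finite $\Omega\subseteq C(X)\rtimes_\red \Zb$ lies in $A\rtimes_\red \Zb$ for some separable $\Zb$-invariant $C^*$-subalgebra $A\subseteq C(X)$, which by Gelfand duality corresponds to a $\Zb$-equivariant continuous surjection $X\to Y$ with $Y$ metrizable. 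Chain recurrence passes to factor systems (factor maps send $\varepsilon$-chains to $\varepsilon$-chains by uniform continuity), so $(Y,T)$ is still chain recurrent and the metrizable case yields the desired approximation of $\Omega$.

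Next, $(2)\Rightarrow(3)$ is the standard fact that quasidiagonal $C^*$-algebras are stably finite: a nonunitary isometry $v$ in a matrix algebra over the quasidiagonal algebra $C(X)\rtimes_\red \Zb$, together with an asymptotically multiplicative, unital, almost-isometric $^*$-linear map into a matrix algebra applied entrywise, would eventually produce $\varphi (v)$ with $\|\varphi (v)^* \varphi (v)-1\|$ small and $\|\varphi (v)\varphi (v)^* -1\|$ bounded away from zero, impossible in a matrix algebra. For $(3)\Rightarrow(1)$ I would argue the contrapositive using the two Pimsner results cited just before the theorem: if $(X,T)$ is not chain recurrent then Lemma~2 of \cite{Pim83} produces an open set $U\subseteq X$ with $T(\overline U )\subseteq U$ and $U\setminus T(\overline U )\neq\emptyset$, and Proposition~8 of \cite{Pim83} manufactures from such a $U$ a nonunitary isometry in $C(X)\rtimes_\red \Zb$, contradicting stable finiteness.

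In the metrizable case, $(4)\Rightarrow(2)$ is immediate since AF algebras are quasidiagonal and quasidiagonality passes to $C^*$-subalgebras. The substantial direction is $(1)\Rightarrow(4)$, for which I would invoke Pimsner's explicit AF-embedding from \cite{Pim83}. The idea is to use chain recurrence to approximate $(X,T)$ by periodic subsystems obtained from finite $\varepsilon$-chains; the crossed product of such a periodic system on a finite set is a finite direct sum of matrix algebras over $C(\Tb )$, and the one-dimensional geometry of $\Zb$ allows these approximations to be organized into a refining inductive system in which the unitary implementing $T$ is successively replaced by permutation matrices cycling around the chain loops, yielding an AF algebra into which $C(X)\rtimes_\red \Zb$ embeds.

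The main obstacle is the implication $(1)\Rightarrow(4)$: it is not an abstract approximation argument but a concrete embedding construction that leans essentially on the one-dimensionality of $\Zb$ and has no known analogue for free groups of rank $\geq 2$. This is precisely why the paper has to settle in Theorem~\ref{T-mf} for the weaker MF conclusion rather than an AF-embedding, and why the $(3)\Rightarrow(1)$ argument via compressed open sets does not transplant to the free-group setting (cf.\ the discussion following Proposition~\ref{P-nonamenable}).
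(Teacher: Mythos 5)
Your decomposition of the equivalences and the facts you invoke are all correct, and for the most part this is exactly how the paper handles the theorem: the two substantial implications, $(1)\Rightarrow(4)$ and $(3)\Rightarrow(1)$, are deferred to Pimsner (\cite[Prop.\ 8]{Pim83} together with \cite[Lemma 2]{Pim83} for the latter), $(4)\Rightarrow(2)$ is heredity of quasidiagonality plus quasidiagonality of AF algebras, and $(2)\Rightarrow(3)$ is the general fact that unital quasidiagonal $C^*$-algebras are stably finite. The one place you genuinely diverge is $(1)\Rightarrow(2)$: the paper obtains it by running $(1)\Rightarrow(4)\Rightarrow(2)$ in the metrizable case and then, for general $X$, "passing through (4) as applied to metrizable factors," whereas you prove it directly from Proposition~\ref{P-rf qd} and Theorem~\ref{T-qd qd} (using $C^*_\red (\Zb )\cong C(\Tb )$), reducing to metrizable factors only via locality of quasidiagonality and the fact that chain recurrence passes to factors. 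Your route has the advantage that the cycle $(1)\Rightarrow(2)\Rightarrow(3)\Rightarrow(1)$ becomes independent of Pimsner's AF-embedding construction, relying on Pimsner only for the elementary Lemma~2 and the isometry construction of Proposition~8; the cost is that it leans on the machinery of Theorem~\ref{T-qd mf}, which is much heavier than what the one-dimensional situation requires, and of course it cannot replace the AF-embedding when condition (4) itself is wanted. Your reduction to separable invariant subalgebras is sound provided you note (as you implicitly do) that a finite subset of the crossed product need only be approximated by elements of some $C(Y)\rtimes_\red \Zb$ with $Y$ a metrizable factor, since Voiculescu's local characterization of quasidiagonality tolerates such approximation.
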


Actually only the metrizable case of the above theorem was treated in \cite{Pim83}, but since
residual finiteness passes to factors and quasidiagonality is a local property one can easily derive 
from \cite{Pim83} the equivalence of conditions (1), (2), and (3) for general $\Zb$-actions
by passing through (4) as applied to metrizable factors. 
The most involved implication in the theorem is (1)$\Rightarrow$(4), while (4)$\Rightarrow$(2) follows from the fact 
that AF algebras are quasidiagonal and quasidiagonality passes to $C^*$-subalgebras, 
and (2)$\Rightarrow$(3) is true for any unital $C^*$-algebra (see Section~V.4 of \cite{Bla06}).
Note that, since crossed products by actions of $\Zb$ are always nuclear, 
for metrizable $X$ the crossed product $C(X)\rtimes_\red \Zb$ is quasidiagonal 
if and only if it is an MF algebra \cite[Thm.\ 5.2.2]{BlaKir97}.

The property of quasidiagonality can be strengthened in certain natural ways, and in view of the above 
theorem we may ask whether residual finiteness
can also be used to dynamically characterize the situation in which the 
crossed product satisfies such a stronger property. Recall for example that a $C^*$-algebra is said to be 
{\it strongly quasidiagonal} if all of its representations are quasidiagonal.
By Voiculescu's theorem \cite{Voi76}, every simple separable quasidiagonal $C^*$-algebra is strongly quasidiagonal.
Hadwin showed in \cite[Thm. 25]{Had87} that, for a compact metric space $(X,d)$ and a homeomorphism $T:X\to X$, 
the crossed product $C(X)\rtimes_\red \Zb$ is strongly quasidiagonal if and only if for every $x\in X$ there are integers 
$m,n\geq N$ such that $d(T^n x,T^{-m} x) < \varepsilon$. It is easy to verify that 
the latter condition is equivalent to {\it hereditary residual finiteness}, by which we mean that every 
subsystem of $(X,T)$ is residually finite.
We also remark that $C(X)\rtimes_\red \Zb$ is residually finite dimensional (i.e., has a
separating family of finite-dimensional representations) if and only if the periodic
points are dense in $X$ \cite[Thm.\ 4.6]{Tom92}.

The proof of Theorem 25 in \cite{Had87} can also be used
to characterize when the crossed product of a metrizable $\Zb$-system is strong NF in terms of residual finiteness.
Following some preliminary observations we will give this characterization in Theorem~\ref{T-snf}. 
Recall that a separable $C^*$-algebra $A$ is said to be 
an {\it NF algebra} if it can be expressed as the 
inductive limit of a generalized inductive
system with contractive completely positive connecting maps, and a
{\it strong NF algebra} if the connecting maps can be chosen to be complete 
order embeddings \cite{BlaKir97}. Theorem~5.2.2 of \cite{BlaKir97} gives various
characterizations of NF algebras; in particular, the following are
equivalent: (i) $A$ is an NF algebra, (2) $A$ is a nuclear MF algebra, and
(iii) $A$ is nuclear and quasidiagonal. In the strong NF case we have
by \cite{BlaKir97,BlaKir01,BlaKir00} the equivalence of the following conditions:
\begin{enumerate}
\item $A$ is a strong NF algebra, 

\item for every $\Omega\in\Fin A$ and $\varepsilon > 0$ there are a
finite-dimensional $C^*$-algebra $B$ and a complete order embedding 
$\varphi : B\to A$ such that for each $x\in\Omega$ there is a $b\in B$ 
with $\| x - \varphi (b) \| < \varepsilon$,

\item $A$ is nuclear and inner quasidiagonal,

\item $A$ is nuclear and has a separating family of irreducible quasidiagonal
representations.
\end{enumerate}

For a $\lambda > 1$, we say that a $C^*$-algebra $A$ is an {\it $\OL_{\infty ,\lambda}$ space}
if for every finite set $\Omega\subseteq A$ and $\varepsilon > 0$ there is a 
finite-dimensional $C^*$-algebra $B$ and an injective linear map $\varphi : B\to A$
with $\Omega\subseteq B$ such that $\| \varphi \|_\cb \| \varphi^{-1} : \varphi (B)\to B \|_\cb < \lambda$.
We write $\OL_\infty (A)$ for the infimum over all $\lambda > 1$ such that $A$ 
is an $\OL_{\infty ,\lambda}$ space. These notions were introduced in \cite{JunOzaRua03}
as a means to analyze and quantify the relationships between properties like nuclearity, quasidiagonality, 
inner quasidiagonality, and stable finiteness using local operator space structure.
If $A$ is a strong NF algebra then $\OL_\infty (A) = 1$ by
a straightforward perturbation argument. Whether the converse is true is an open question, 
although in \cite{Eck00} it was shown to hold under the assumption that $A$ has a finite separating family
of primitive ideals. As part of Theorem~\ref{T-snf} we will verify that the converse also holds in our situation,
so that we obtain a dynamical characterization of both strong NFness and the $\OL_\infty$ invariant equalling $1$
for crossed products of metrizable $\Zb$-systems.

\begin{lemma}\label{L-trans}
Let $(X,T)$ be a transitive metrizable $\Zb$-system and let $d$ be a compatible metric on $X$.
Let $x$ be a point in $X$ such that $\overline{\orb (x)} = X$. Then the following are
equivalent:
\begin{enumerate}
\item $(X,T)$ is residually finite,

\item $\overline{\{ T^k x \}}_{k\geq 0} \cap 
\overline{\{ T^k x \}}_{k<0} \neq\emptyset$,

\item for every $\varepsilon > 0$ and $N\in\Nb$ there are 
integers $m,n\geq N$ such that $d(T^n x,T^{-m} x) < \varepsilon$,

\item for every $\varepsilon > 0$ there are 
$m,n\in\Nb$ such that $d(T^n x,T^{-m} x) < \varepsilon$.
\end{enumerate}
\end{lemma}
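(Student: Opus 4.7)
The plan is to close a cycle $(1)\Rightarrow(2)\Rightarrow(4)\Rightarrow(1)$ together with the equivalence $(3)\Leftrightarrow(4)$. The periodic case is trivial, since if $x$ is periodic then $X=\orb(x)$ is finite and all four conditions hold, so I will assume throughout that $x$ is aperiodic.

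For the easy equivalences, $(3)\Rightarrow(4)$ is immediate, and $(2)\Leftrightarrow(4)$ follows from the fact that a point $y\in\overline{\{T^k x\}}_{k\geq 0}\cap\overline{\{T^k x\}}_{k<0}$ is witnessed by arbitrarily close pairs $(T^{n}x,T^{-m}x)$ with $n,m\in\Nb$, and conversely a compactness argument extracts such a $y$ from a sequence of pairs produced by $(4)$ with $\varepsilon\to 0$. For $(4)\Rightarrow(3)$, suppose $(3)$ fails for some $(\varepsilon_0,N_0)$; feeding $(4)$ with $\varepsilon=1/j$ produces pairs $(n_j,m_j)$ in which pigeonhole forces, say, $n_j<N_0$ infinitely often. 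After stabilizing $n_j=n^*$ on a subsequence, aperiodicity rules out $m_j$ being bounded (else $T^{n^*+m^*}x=x$), so $m_j\to\infty$; applying $T^{N-n^*}$ for $N\geq N_0$ and using uniform continuity then contradicts the failure of $(3)$.

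For $(4)\Rightarrow(1)$: use $(4)$ to build, for any $\varepsilon$, an $\varepsilon$-chain from $x$ to itself by running forward along the orbit to $T^{n}x$, jumping via $T^{n}x\approx T^{-m}x$ to the backward orbit, and riding the backward orbit back to $x=T^{0}x$. This shows $x$ is chain recurrent; since the set of chain recurrent points is closed and $T$-invariant (both easy consequences of uniform continuity), the density condition $\overline{\orb(x)}=X$ then gives chain recurrence of the whole system, hence residual finiteness by Proposition~\ref{P-rf cr}.

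The substantive step is $(1)\Rightarrow(2)$, which I would handle via the Pimsner characterization quoted before Theorem~\ref{T-Pimsner} (no open $U$ with $T(\overline U)\subseteq U$ and $U\setminus T(\overline U)\neq\emptyset$). Split on whether $x$ is forward recurrent. If it is, so that $T^{n_i}x\to x$ for some $n_i\to\infty$, then $T^{n_i-1}x\to T^{-1}x$ places $T^{-1}x$ in both $\overline{\{T^k x\}}_{k\geq 0}$ and $\overline{\{T^k x\}}_{k<0}$. If it is not, then setting $Y_+=\overline{\{T^k x\}}_{k\geq 0}$ and $Y_-=\overline{\{T^k x\}}_{k<0}$, aperiodicity combined with the failure of forward recurrence gives $T(Y_+)=Y_+\setminus\{x\}$. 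Assuming toward contradiction that $Y_+\cap Y_-=\emptyset$, the identity $X=\overline{\orb(x)}=Y_+\cup Y_-$ forces $Y_+$ to be clopen, so $U=Y_+$ is an open set with $T(\overline U)\subseteq U$ and $U\setminus T(\overline U)=\{x\}\neq\emptyset$, contradicting chain recurrence. The main obstacle is organizing this last case analysis cleanly, since the interplay between forward recurrence of $x$, aperiodicity, and the openness hypothesis in Pimsner's criterion requires careful bookkeeping.
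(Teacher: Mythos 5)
Your proof is correct and follows essentially the same route as the paper: (1)$\Rightarrow$(2) via Pimsner's compressibility criterion for chain recurrence (the paper likewise observes that if (2) fails then $\overline{\{T^kx\}}_{k\geq 0}$ is clopen and mapped by $T$ to a proper clopen subset of itself), and (4)$\Rightarrow$(1) by concatenating the forward segment $x,Tx,\dots,T^{n-1}x$ with the backward segment $T^{-m}x,\dots,T^{-1}x,x$ into an $\varepsilon$-chain and using that the chain recurrent set is closed and invariant. The only real divergence is bookkeeping around (3): the paper derives (3) directly from (2) by picking $m,n\geq 0$ with $m+n\geq 2N$ and $d(T^nx,T^{-m}x)$ small, then shifting by a bounded amount $r$ under uniform continuity, which avoids your subsequence/pigeonhole argument for (4)$\Rightarrow$(3) and treats the periodic and aperiodic cases uniformly.
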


\begin{proof}
(1)$\Rightarrow$(2). If (2) does not hold then 
$\overline{\{ T^k x \}}_{k\geq 0}$ equals 
$X\setminus\overline{\{ T^k x \}}_{k<0}$
and hence is a clopen set which is sent to a proper clopen subset of itself under
$T$, so that $(X,T)$ fails to be residually finite by Pimsner's characterization of chain recurrence in 
Lemma~2 of \cite{Pim83}.

(2)$\Rightarrow$(3). Let $\varepsilon > 0$ and $N\in\Nb$. Take
a $\delta > 0$ with $\delta\leq\varepsilon$ such that if 
$d(y,z)<\delta$ then $d(T^k y,T^k z)<\varepsilon$ for every $k=-N,-N+1,\dots , N$. 
Whether $x$ is periodic or nonperiodic, it is clear 
from (2) that there exist integers 
$m,n\geq 0$ such that $m+n\geq 2N$ and $d(T^n x,T^{-m} x)<\delta$. 
Set $r$ equal to $N-n$, $m-N$, or $0$ according to whether $n<N$, $m<N$, or
$m,n\geq N$. Then $m-r,n+r\geq N$ and 
$d(T^{n+r} x, T^{-m+r} x)<\varepsilon$, yielding (3).

(3)$\Rightarrow$(4). Trivial.

(4)$\Rightarrow$(1). Given $m,n\in\Nb$, the sequence
$\{ x, Tx ,\dots ,T^{n-1} x, T^{-m} x , T^{-m+1} x, \dots , T^{-1} x ,x \}$
forms an $\varepsilon$-chain precisely when $d(T^n x,T^{-m} x) < \varepsilon$, 
and so $x$ is chain recurrent. Since the chain recurrent set is closed and 
$\Zb$-invariant, we obtain (1).
\end{proof}

The following lemma is a local version of Theorem~25 of \cite{Had87} and essentially follows by 
the same argument using Berg's technique as in the proof of Lemma~3.6 in \cite{PimVoi80}.
We reproduce this argument below.
For background on induced representations see \cite{Wil07}.

\begin{lemma}\label{L-trans qd}
Let $(X,T)$ be a transitive metrizable $\Zb$-system, and let $x$ be a 
point in $X$ such that $\overline{\orb (x)} = X$. Then the following are
equivalent: 
\begin{enumerate}
\item $(X,T)$ is residually finite,

\item every irreducible representation of $C(X)\rtimes_\red \Zb$
induced from the isotropy group of $x$ is quasidiagonal,

\item $C(X)\rtimes_\red \Zb$ has a faithful irreducible
quasidiagonal representation.
\end{enumerate}
\end{lemma}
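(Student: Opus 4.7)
The plan is to prove the cyclic chain $(3) \Rightarrow (1) \Rightarrow (2) \Rightarrow (3)$. The only substantive work lies in $(1) \Rightarrow (2)$, where Berg's technique is used to exhibit the finite-rank projections witnessing quasidiagonality.

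The implication $(3) \Rightarrow (1)$ is immediate: a faithful quasidiagonal representation witnesses the quasidiagonality of $C(X)\rtimes_\red \Zb$ as a $C^*$-algebra, and Theorem~\ref{T-Pimsner} then gives residual finiteness of $(X,T)$. For $(2) \Rightarrow (3)$ I will verify that the induced representation $\pi = \pi_{x,\chi}$ (for an appropriate choice of $\chi$) is both irreducible and faithful on $C(X)\rtimes_\red \Zb$: irreducibility is the standard Mackey analysis for transformation group $C^*$-algebras, and faithfulness follows from the density of $\orb(x)$ together with the observation that $\pi$ is the GNS representation of the state $\hat{x} \circ E$, where $E : C(X)\rtimes_\red \Zb \to C(X)$ is the canonical conditional expectation.

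The core of the proof is $(1) \Rightarrow (2)$. Writing $H = H_x$ for the isotropy of $x$, the representation $\pi_{x,\chi}$ acts on $\ell^2(\Zb/H)$ via $f \in C(X) \mapsto \diag(f(T^n x))_{nH}$ and $u \mapsto$ (shift twisted by $\chi$). If $x$ is periodic, $X$ is a finite orbit and $\pi_{x,\chi}$ is finite-dimensional, hence automatically quasidiagonal; I therefore focus on the nonperiodic case, where $H = \{0\}$ and $\pi$ is the standard representation on $\ell^2(\Zb)$ with $u$ as bilateral shift. Fixing $\varepsilon > 0$ and a finite subset $\Omega \subseteq C(X)\rtimes_\alg \Zb$, I first pick a uniform continuity modulus $\delta$ for the $C(X)$-coefficients of elements of $\Omega$ and then, via Lemma~\ref{L-trans}(3), choose arbitrarily large $m, n$ with $d(T^n x, T^{-m}x) < \delta$. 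Following the template of Pimsner-Voiculescu Lemma~3.6, I construct a finite-rank projection $P$ on $\ell^2(\Zb)$ whose range agrees with $\ell^2(\{-m+L, \ldots, n-L\})$ on the interior and, on two boundary strips of width $L$, is spanned by Berg-type rotated vectors of the form $\cos(\theta_j) e_{-m+j} + \sin(\theta_j) e_{n+1+j-L}$ (paired with their orthogonal complements), with the angles $\theta_j$ sweeping smoothly between $0$ and $\pi/2$. The commutator $[P, u]$ then vanishes on the interior and admits a telescoping estimate of order $O(1/L)$ on the boundary strips; the commutator $[P, \pi(f)]$ is controlled because the boundary rotations mix $e_{-m+j}$ with $e_{n+1+j-L}$, positions at which the diagonal entries of $\pi(f)$ differ by at most the modulus of continuity of $f$ applied to $d(T^{-m+j}x, T^{n+1+j-L}x)$, which is bounded by $\delta$ by $T$-equivariance and our choice of $m,n$.

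The principal obstacle is coordinating the three scales in this construction, namely the equivariance error $\delta$, the boundary width $L$, and the maximum Fourier degree of the elements of $\Omega$ (which determines how many shifts of $u$ are applied and hence how many boundary adjustments propagate through the estimate). Choosing $L$ large enough with respect to this degree, and $\delta$ small enough with respect to $L$ and the continuity moduli, delivers the uniform commutator estimates needed to conclude that $\pi$ is quasidiagonal.
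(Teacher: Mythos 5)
Your proposal is correct and follows essentially the same route as the paper: (3)$\Rightarrow$(1) via Theorem~\ref{T-Pimsner}, (2)$\Rightarrow$(3) by identifying the (unique, in the aperiodic case) induced representation on $\ell^2(\Zb)$ and checking it is faithful and irreducible, and (1)$\Rightarrow$(2) by the Berg-technique construction of Pimsner--Voiculescu Lemma~3.6, which is precisely the paper's model. The only differences are cosmetic: the paper cites Zeller-Meier for faithfulness where you use the GNS picture of $\hat{x}\circ E$ (which works), and your boundary pairing $e_{-m+j}\leftrightarrow e_{n+1+j-L}$ has an indexing slip --- the paired indices must differ by the constant $n+m$ so that closeness of the corresponding diagonal entries follows from $d(T^{-m}x,T^{n}x)<\delta$ and uniform continuity of the powers of $T$ --- which is easily repaired.
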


\begin{proof}
If $\orb (x)$ has finite cardinality $n$ then $T$ is a cyclic permutation. As is well known,
the crossed product $C(X)\rtimes_\red \Zb$ in this case is $^*$-isomorphic to 
$M_n \otimes C(\Tb )$, whose irreducible representations are all evidently
quasidiagonal and, up to unitary equivalence, induced from the isotropy group 
of $x$. We may thus assume that $\orb (x)$ is infinite. Then the isotropy 
group of $x$ is trivial and there is only one induced representation,
namely the irreducible representation 
$\pi : C(X)\rtimes_\red \Zb \to\cB (\ell^2 (\Zb ))$ defined by 
$\pi (f)\xi_n = f(T^n x) \xi_n$ and $\pi (u)\xi_n = \xi_{n+1}$ for all 
$f\in C(X)$ and $n\in\Zb$, where $u$ is the canonical unitary
associated to $T$ and $\{ \xi_n \}_{n\in\Zb}$ is the canonical orthonormal
basis of $\ell^2 (\Zb )$. Since $\Zb$ acts freely 
on $\orb (x)$ and subrepresentations of $\pi |_{C(X)}$ correspond to subsets of $\orb (x)$,
it follows that $\pi |_{C(X)}$ is $G$-almost free in the sense of Definition~1.12 of
\cite{ZM68}, so that $\pi$ is faithful by Corollary~4.19 of \cite{ZM68}.
We thus have (2)$\Rightarrow$(3). Note also that condition (3) implies that
$C(X)\rtimes_\red \Zb$ is a quasidiagonal $C^*$-algebra and thus we
get (3)$\Rightarrow$(1) in view of Theorem~\ref{T-Pimsner}. Let us 
then assume (1) and show that $\pi$ is quasidiagonal in order to obtain (2).

Let $\Omega$ be a finite subset of $C(X)$ and let
$n\in\Nb$. In view of the definition of a quasidiagonal representation,
we need only produce an orthogonal projection $p$ in $\cB (\ell^2 (\Zb ))$
which dominates the orthogonal projection onto the subspace spanned
by $\{ \xi_j : -n\leq j\leq n \}$ and satisfies $\| [p,\pi (f)] \| < 2/n$
for all $f\in\Omega$ and $\| [p,\pi (u)] \| < 8/n$. 
Fix a compatible metric $d$ on $X$.
By uniform continuity there exists a $\delta > 0$ 
such that if $x_1 , x_2 \in X$ and $d(x_1 , x_2 ) < \delta$ then 
$|f(T^k x_1 ) - f(T^k x_2 )| < 1/n$ for each $f\in\Omega$
and $k=0, \dots , n-1$. 
By Lemma~\ref{L-trans}
we can find $r > n$ and $s < -2n$ such that $d(T^r x, T^s x) < \delta$, so
that $|f(T^{r+k} x) - f(T^{s+k} x)| < 1/n$ for each $f\in\Omega$ and
$k=0, \dots , n-1$. For each
$f\in\Omega$ we define a perturbation $a_f \in \cB (\ell^2 (W))$ of 
$\pi (f)$ by 
$$ a_f \xi_n = \left\{ \begin{array}{l@{\hspace*{5mm}}l} 
f(T^{s+k} x)\xi_n & 
\text{if } n = r+k \text{ for some } k=0, \dots , n-1, \\ 
f(T^n x)\xi_n & \text{otherwise},
\end{array} \right. $$
in which case we have $\| a_f - \pi (f) \| < 1/n$.
 
Next we apply Berg's technique (see Section~VI.4 of \cite{Dav96}) to produce 
orthogonal
unit vectors $\zeta_k , \eta_k \in\spn (\xi_{r+k} , \xi_{s+k} )$ for each
$k=0, \dots , n-1$ and a unitary $v\in\cB (\ell^2 (\Zb ))$ such that
\begin{enumerate}
\item $v\zeta_k = \zeta_{k+1}$ and $v\eta_k = \eta_{k+1}$ for 
$k=0, \dots , n-2$,

\item $v\zeta_{n-1} = \xi_{s+n}$ and $v\eta_{n-1} = \xi_{r+n}$,

\item $v$ agrees with $\pi (u)$ on the orthogonal complement of the span of the vectors
$\xi_r , \xi_{r+1} , \dots , \linebreak[1]\xi_{r+n-1} , \linebreak[1]
\xi_s , \linebreak[1]\xi_{s+1} , \dots , \xi_{s+n-1}$, and 

\item $\| \pi (u)-v \| < 4/n$.
\end{enumerate}
Let $p$ be the orthogonal projection onto the span of the vectors
$\xi_{r+n} , \xi_{r+n+1} , \dots , \xi_{s-1} , \linebreak[1]\eta_0 , 
\linebreak[1]\eta_1 , \dots ,
\eta_{n-1}$. Since the unitary $v$ cyclically permutes these vectors we have
$[p,v] = 0$ and hence
$$ \| [p,\pi (u) ] \| \leq \| p(\pi (u)-v) \| + 
\| (v-\pi (u))p \| \leq 2 \| \pi (u)-v \| < 8/n . $$
Also, if $f\in\Omega$ then $\spn (\xi_{r+k} , \xi_{s+k} )$ is an
eigenspace for $a_f$ for every $k=0, \dots ,n-1$, so that 
$[p,a_f ] = 0$ and hence
$$ \| [p,\pi (f) ] \| \leq \| p(\pi (f) - a_f ) \| + 
\| (a_f -\pi (f))p \| \leq 2 \| a_f - f \| < 2/n , $$ 
completing the proof.
\end{proof}

\begin{theorem}\label{T-snf}
Let $(X,T)$ be a metrizable $\Zb$-system and let $d$ be a compatible metric on $X$. 
Then the following are equivalent:
\begin{enumerate}
\item there is a collection $\{ (X_i ,T) \}_{i\in I}$ of transitive residually finite
subsystems of $(X,T)$ such that $\bigcup_{i\in I} X_i$ is dense in $X$,

\item there is a dense set $D\subseteq X$ such that for every $x\in D$,
$\varepsilon > 0$, and $N\in\Nb$ there are integers $m,n\geq N$ 
for which $d(T^n x,T^{-m} x) < \varepsilon$,

\item $C(X)\rtimes_\red \Zb$ is strong NF,

\item $\OL_\infty (C(X)\rtimes_\red \Zb ) = 1$.
\end{enumerate}
\end{theorem}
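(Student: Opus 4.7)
The plan is to close the cycle $(1) \Leftrightarrow (2) \Rightarrow (3) \Rightarrow (4) \Rightarrow (1)$. Lemmas~\ref{L-trans} and~\ref{L-trans qd} carry the dynamical content, and on the $C^*$-algebraic side I would invoke the characterization of strong NF algebras as nuclear $C^*$-algebras admitting a separating family of irreducible quasidiagonal representations (from \cite{BlaKir97, BlaKir01, BlaKir00}, as recalled just before the statement), together with the relationship between the $\OL_\infty$ invariant and inner quasidiagonality established in \cite{BlaKir07, Eck00}.

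For $(2) \Rightarrow (1)$, given $x \in D$ set $X_x = \overline{\orb (x)}$: this is a transitive subsystem, and condition~(2) of the theorem is precisely condition~(3) of Lemma~\ref{L-trans} applied to $x$, so $(X_x, T)$ is residually finite. The collection $\{(X_x, T) : x \in D\}$ then witnesses~(1) since $\bigcup_{x \in D} X_x \supseteq D$ is dense. For the converse, within each transitive $(X_i, T)$ the set of points with dense orbit in $X_i$ is a dense $G_\delta$ by metrizability, and residual finiteness of $(X_i, T)$ gives condition~(3) of Lemma~\ref{L-trans} at each such point; taking $D$ to be the union of these transitive-point sets over $i$ yields~(2).

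For $(1) \Rightarrow (3)$, pick a transitive point $x_i \in X_i$ for each $i$; Lemma~\ref{L-trans qd} produces, via induction from the isotropy group of $x_i$, an irreducible quasidiagonal representation $\pi_i$ of $C(X_i) \rtimes_\red \Zb$. Composing with the canonical quotient $C(X) \rtimes_\red \Zb \twoheadrightarrow C(X_i) \rtimes_\red \Zb$ yields an irreducible quasidiagonal representation $\tilde \pi_i$ of $C(X) \rtimes_\red \Zb$ whose kernel is contained in the ideal $J_i$ of elements vanishing on $X_i$. Density of $\bigcup_i X_i$ forces $\bigcap_i J_i = 0$, so $\{\tilde \pi_i\}$ separates $C(X) \rtimes_\red \Zb$; combined with nuclearity (from amenability of $\Zb$), the characterization from \cite{BlaKir97, BlaKir01, BlaKir00} yields strong NFness. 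The implication $(3) \Rightarrow (4)$ is the standard perturbation noted in the discussion preceding the statement: from a complete order embedding of a finite-dimensional $C^*$-algebra whose image nearly contains a prescribed finite set, one extracts an injective linear map with $\cb$-norm ratio arbitrarily close to~$1$.

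The main obstacle is $(4) \Rightarrow (1)$. My plan is to apply (or mimic) the argument of Eckhardt \cite{Eck00} together with techniques of \cite{BlaKir07} to promote $\OL_\infty(C(X) \rtimes_\red \Zb) = 1$ to the existence of a separating family $\{\pi_\alpha\}$ of irreducible quasidiagonal representations. Each $\pi_\alpha$ restricted to $C(X)$ is supported on a closed $\Zb$-invariant subset $Y_\alpha \subseteq X$, on which it descends to a faithful irreducible quasidiagonal representation of $C(Y_\alpha) \rtimes_\red \Zb$; standard arguments using the irreducibility of $\pi_\alpha$ and the structure of $\Zb$-crossed products (as in \cite{Had87}) then force $(Y_\alpha, T)$ to be topologically transitive, whereupon Lemma~\ref{L-trans qd} yields residual finiteness of $(Y_\alpha, T)$. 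Separation of the family $\{\pi_\alpha\}$ translates to density of $\bigcup_\alpha Y_\alpha$ in $X$, establishing~(1). The delicate point is extracting the inner-quasidiagonality-type conclusion from $\OL_\infty = 1$ in the absence of the finite primitive ideal space hypothesis of \cite{Eck00}; the avenue I would pursue is to exploit the explicit orbit-closure structure of $\Zb$-crossed products over a compact metrizable space in order to localize Eckhardt's reduction to the transitive pieces covered by Lemma~\ref{L-trans qd}.
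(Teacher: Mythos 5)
Your handling of $(1)\Leftrightarrow(2)$, of $(1)\Rightarrow(3)$ via Lemma~\ref{L-trans qd} and the separating family of quotients $C(X)\rtimes_\red\Zb\to C(X_i)\rtimes_\red\Zb$, and of $(3)\Rightarrow(4)$ all match the paper's argument. The problem is $(4)\Rightarrow(1)$. Your plan is to ``promote'' $\OL_\infty(C(X)\rtimes_\red\Zb)=1$ to a separating family of irreducible \emph{quasidiagonal} representations. But that conclusion is essentially inner quasidiagonality, i.e.\ strong NFness, and whether $\OL_\infty(A)=1$ implies strong NF is precisely the open question recalled just before the theorem statement; Eckhardt's result in \cite{Eck00} resolving it requires a finite separating family of primitive ideals, which is unavailable here. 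You flag this as ``the delicate point'' and gesture at localizing to transitive pieces, but you offer no mechanism, and the step as proposed would amount to assuming what has to be proved.

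The paper sidesteps this entirely by asking for much less from $\OL_\infty=1$. Theorem~5.4 of \cite{Eck00} holds without any hypothesis on the primitive ideal space and yields a separating family $\Pi$ of irreducible representations whose images are merely \emph{stably finite}. One then uses Gootman's description \cite{Goo72} of the primitive ideals of $C(X)\rtimes_\red\Zb$ as kernels of representations induced from isotropy groups of points, together with the approximate unitary equivalence of any two faithful irreducible representations of a separable prime $C^*$-algebra (Voiculescu in the antiliminal case, uniqueness of the faithful irreducible representation in the non-antiliminal case), to replace each $\pi\in\Pi$ by a representation induced from a point $x_\pi$. Stable finiteness of the image then passes to $C(\overline{\orb(x_\pi)})\rtimes_\red\Zb$, and Theorem~\ref{T-Pimsner} converts stable finiteness of that crossed product into residual finiteness of the transitive subsystem $(\overline{\orb(x_\pi)},T)$; density of $\bigcup_\pi\overline{\orb(x_\pi)}$ follows from the separating property of $\Pi$. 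The point you are missing is that quasidiagonality of the representations is never needed: Pimsner's equivalence of stable finiteness and residual finiteness at the level of each orbit closure does all the work, and stable finiteness is what $\OL_\infty=1$ actually delivers in general.
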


\begin{proof}
(1)$\Leftrightarrow$(2). This is a simple consequence of Lemma~\ref{L-trans}.

(2)$\Rightarrow$(3). Since the set $W=\bigcup_{i\in I} X_i$ is dense in $X$ 
the canonical quotient maps $C(X)\rtimes_\red \Zb \to C(X_i )\rtimes_\red \Zb$ 
for $i\in I$ form a separating family. This can be seen by taking the
faithful representation $\pi : C(X)\rtimes_\red \Zb \to \cB (\ell^2 (W) \otimes\ell^2 (\Zb ))$
canonically induced from the multiplication 
representation of $C(X)$ on $\ell^2 (W)$ and observing that for each $i\in I$ 
the cut-down of $\pi (C(X)\rtimes_\red \Zb )$ by the orthogonal projection 
onto $\ell^2 (X_i )\otimes\ell^2 (\Zb )$ is $^*$-isomorphic to 
$C(X_i )\rtimes_\red \Zb$. Since for each $i\in I$ the crossed 
product $C(X_i )\rtimes_\red \Zb$ has a faithful irreducible 
quasidiagonal representation by Lemma~\ref{L-trans qd}, we thus conclude 
using Theorem~4.5 of \cite{BlaKir01} that $C(X)\rtimes_\red \Zb$ is strong NF.

(3)$\Rightarrow$(4). This implication holds for general $C^*$-algebras,
as mentioned prior to the theorem statement. 

(4)$\Rightarrow$(1). By Theorem~5.4 of \cite{Eck00} $C(X)\rtimes_\red \Zb$
has a separating family $\Pi$ of irreducible representations whose images are stably finite $C^*$-algebras.
By \cite{Goo72} every primitive ideal of $C(X)\rtimes_\red \Zb$ is the 
kernel of an irreducible representation induced from the isotropy group of 
some point $x\in X$. Now any two faithful irreducible representations of a 
separable prime (equivalently, separable primitive) $C^*$-algebra are 
approximately unitarily equivalent. In the antiliminal case this follows from 
Voiculescu's theorem \cite{Voi76} as every faithful irreducible representation will be essential, 
while in the non-antiliminal case
the $C^*$-algebra has an essential ideal $^*$-isomorphic to the 
compact operators and hence has only one faithful irreducible representation 
up to unitary equivalence. Consequently 
we may assume that each $\pi\in\Pi$ is induced from the isotropy group of 
some $x_\pi \in X$. If this point $x_\pi$ has nontrivial isotropy group then 
it is periodic and the system $(\overline{\orb (x_\pi )} , T)$ is 
trivially residually finite. If on the other hand $x_\pi$ has trivial isotropy group then, 
as indicated in the proof of Lemma~\ref{L-trans qd}, there is a unique induced 
representation of $C(\overline{\orb (x_\pi )})\rtimes_\red \Zb$ and it is faithful, implying that 
$C(\overline{\orb (x_\pi )})\rtimes_\red \Zb$
is stably finite and hence by Theorem~\ref{T-Pimsner} 
that $(\overline{\orb (x_\pi )} , T)$ is residually finite. It remains to observe that 
$\bigcup_{\pi\in\Pi} \overline{\orb (x_\pi )}$ is dense in $X$, which results
from the fact that $\pi (f) = 0$ for all $\pi\in\Pi$ and $f\in C(X)$ whose
support is contained in the complement of 
$\bigcup_{\pi\in\Pi} \overline{\orb (x_\pi )}$.
\end{proof}

\begin{example}
Using Theorem~\ref{T-snf} we can produce by dynamical means many 
examples of NF algebras which are not strong NF 
(cf.\ Examples~5.6 and 5.19 of \cite{BlaKir01}).
For instance, take two copies of translation on $\Zb$ each compactified with
two fixed points $\pm \infty$ and identify
$+\infty$ from each copy with $-\infty$ of the other copy. This system is
residually finite, but the transitive subsystems generated by each copy of $\Zb$ fail to
be residually finite, so that $C(X)\rtimes_\red \Zb$ is NF but not strong NF.
This example is a dynamical analogue of Example~3.2 in \cite{Eck00}.
\end{example}

\begin{example}\label{E-snf sqd}
We can also use Theorem~\ref{T-snf} to exhibit 
strong NF crossed products which are not strongly quasidiagonal. 
For instance, take translation on $\Zb$ and compactify it so that it spirals 
around to the example from the previous paragraph in both the forward and 
backward directions, spending longer and longer intervals near each of the 
two fixed points in each successive approach. The crossed product is
strong NF since the system is transitive and residually finite, but it is not strongly quasidiagonal
by the discussion in the second paragraph following Theorem~\ref{T-Pimsner}. In the 
backward direction we could instead have convergence to one of the fixed 
points, in which case we have the additional feature that the backward and 
forward limit sets of the unique dense orbit do not coincide. 
\end{example}

\end{document}